\newtheorem{thm}{Theorem}[section]
\newtheorem{lem}[thm]{Lemma}
\newtheorem{cor}[thm]{Corollary}
\newtheorem{prop}[thm]{Proposition}
\theoremstyle{remark}
\newtheorem{rem}[thm]{Remark}
\numberwithin{equation}{section}
\newcommand{\beq}{\begin{equation}}
\newcommand{\eeq}{\end{equation}}
\def\real{\hbox{\rm\setbox1=\hbox{I}\copy1\kern-.45\wd1 R}}
\def\prob{\hbox{\rm\setbox1=\hbox{I}\copy1\kern-.45\wd1 P}}
\def\natural{\hbox{\rm\setbox1=\hbox{I}\copy1\kern-.45\wd1 N}}
\def\integers{\hbox{\rm\setbox1=\hbox{-}\copy1\kern-1.2\wd1 Z}}
\def\diam{\mathop{\rm diam}}
\newcommand{\B}{\mbox{$\mathcal B$}}
\begin{document}
\title{Moving Averages}
\author{Terrence Adams and Joseph Rosenblatt}
\date{February 2023}

\begin{abstract} 
We consider the convergence of moving averages in the general setting 
of ergodic theory or stationary ergodic processes.  We characterize 
when there is universal convergence of moving averages based 
on complete convergence to zero of the standard ergodic averages.  
Using a theorem of Hsu-Robbins (1947)
for independent, identically distributed processes, we give a concise 
proof that given a compact standard probability space $(X,\B,\mu)$, 
a continuous function $f:X\to \real$ and 
a certain family of Bernoulli transformations $T$, 
then all moving averages 
$M(v_n, L_n)^T f = 
\frac{1}{L_n} \sum_{i=v_n+1}^{v_n+L_n} f \circ T^i$ with $L_n$ 
strictly increasing converge pointwise 
for almost every $x\in X$:
\[
\lim_{n\to \infty} \frac{1}{L_n} \sum_{i=v_n+1}^{v_n+L_n} f(T^i x) 
= \int_X f d\mu . 
\] 
The process $f\circ T^i$ may be far from independent and identically 
distributed (e.g., $f$ is a 1-1 function).  
This result is extended to show for any bounded measurable function 
$f$, there exists a Bernoulli transformation $T$ such that all 
moving averages $M(v_n, L_n)^T f$ converge pointwise. 

We refresh the reader about the 
cone condition established 
by Bellow, Jones, Rosenblatt (1990) which guarantees 
convergence of certain moving averages for all $f \in L^1(\mu)$ 
and ergodic measure preserving maps $T$.  We show given $f \in L^1(\mu)$ 
and ergodic measure preserving $T$, there exists a moving average 
$M(v_n,L_n)^T f$ with $L_n$ strictly increasing such that $(v_n,L_n)$ 
does not satisfy the cone condition, but pointwise 
convergence holds almost everywhere.  

We show for any non-zero function $f\in L^1(\mu)$, there is a generic 
class of ergodic maps $T$ such that each map has an associated moving 
average $M(v_n, L_n)^T f$ which does not converge pointwise.  
However, we already know from previous work that if $f\in L^2(\mu)$ 
is mean-zero, then there exist solutions $T$ and $g\in L^1(\mu)$ 
to the coboundary equation: $f = g - g\circ T$.  
This implies that $f$ and $T$ produce universal moving averages. 
We show this does not generalize to $L^p(\mu)$ for $p<2$ 
by constructing a family $\mathcal{F}_2$ of functions such that 
$\mathcal{F}_2\cap_{p<2} L^p(\mu)\neq \emptyset$, and 
for $f\in \mathcal{F}_2$ and each ergodic measure preserving $T$, 
there exists a moving average $M(v_n, L_n)^T f$ with $L_n$ 
strictly increasing such that ergodic averages do not converge pointwise.  
Several of the results are generalized to the case of moving averages 
with polynomial growth. 
\end{abstract}


\maketitle

\section {\bf Introduction}\label{intro}

We assume that we have a standard, non-atomic probability space $(X,\B,\mu)$.  We consider measure-preserving maps $T$ of $(X,\B,\mu)$ and functions in $f\in L^p(\mu)$ for $p\geq 1$.  
Given a sequence of integer pairs, $(v_n, L_n)$, define the 
{\em moving average with respect to $T$} of $f$ as, 
\[
M(v_n,L_n)^Tf =  \frac 1{L_n}\sum\limits_{i=0}^{L_n-1} f \circ T^{v_n+i} . 
\]

For a fixed map $T$, we give a characterization of $f\in L^1(\mu)$ such that there is almost everywhere convergence for all moving averages with $L_n$ strictly increasing.  Since we are mostly interested in ergodic averages where $L_n$ is strictly increasing, we will use the term increasing to mean strictly increasing.  The necessary and sufficient condition is that the usual ergodic averages $M(0,n)^Tf = \frac 1n\sum\limits_{k=1}^n f\circ T^k$ have $\sum\limits_{n=1}^\infty \mu \{|M(0,n)^Tf| \ge \delta\} < \infty$ for all $\delta > 0$.  This series convergence is called {\em complete convergence} to $0$, or just complete convergence, of $M(0,n)^Tf$.  It holds for a first category linear subspace of functions in $L^1(\mu)$, but a subspace that is always larger than just the linear subspace of $T$-coboundaries with transfer functions in $L^1(\mu)$.  

But also, if we fix $f$, we show that for a generic class of maps $T$, the function fails to allow a.e. convergence for some moving average with respect to $T$.  So there is the challenging problem of finding $T$ for which the moving averages of $f$ always converge almost everywhere. 
Two main approaches are considered in this paper.  

First, we leverage a result of Hsu-Robbins on complete convergence 
of independent identically distributed processes.  The Hsu-Robbins 
Theorem together with our characterization for universal moving averages 
implies if a function $f\in L^2(\mu)$ admits a transformation $T$ such that 
$X_i=f\circ T^i$ is i.i.d., then all moving averages $M(v_n,L_n)^T$ 
with $L_n$ increasing converge a.e.  
Also, we show that if $f$ is continuous on a compact space $X$, 
then for any Bernoulli transformation $T$, all moving averages 
converge where $L_n$ is increasing.  
In this case, it is not necessary that $X_i=f\circ T^i$ 
is i.i.d.  

Second, we observe that finding a transformation $T$ with universally convergent moving averages is related to the coboundary existence problem.  This is the problem of expressing a measurable function $f = g - g \circ T$ with respect to another measurable function $g$ and a measure-preserving map $T$.  When this occurs, we say that $f$ is a {\em $T$-coboundary with transfer function $g$}.  In Adams and Rosenblatt~\cite{AR3}, we have shown that for $f\in L^p(\mu)$, $1 \le p \le \infty$, there is an ergodic map $T$ and function $g \in L^{p-1}(\mu)$ such that the function 
$f - \int_X f d\mu$ is a $T$-coboundary with transfer function $g$.  In the case where $g\in L^1(\mu)$, this implies that all moving averages $M(v_n,L_n)^T$ converge to $\int_X f d\mu$ pointwise where $L_n\geq n$.  Thus, for $f\in L^2(\mu)$, this coboundary result guarantees an ergodic transformation $T$ with universal moving averages.  Also, this connection extends to $g\in L^p(\mu)$ for $p < 2$.  In the appendix, we show that if $f = g - g\circ T$ where $g \in L^p$ for $p>0$, then given $(v_n, L_n)$ where $L_n \geq n^{\frac{1}{p}}$, the following limit converges pointwise a.e.:
\[
\lim_{n\to \infty} \frac{1}{L_n} \sum_{i=0}^{L_n-1} f(T^{v_n+i}x) = 0 . 
\]

This raises the question of whether given $f\in L^1(\mu)$, there exists 
ergodic $T$ with universal moving averages.  In the final section, 
we construct ``bad'' functions $f \in \bigcup_{p<2} L^p(\mu)$ such 
that given any ergodic $T$, there exists a ``bad'' moving average 
$M(v_n,L_n)^T$ where $L_n \geq n$.  The main results are extended 
to moving averages with polynomial lengths $L_n = n^d$ for 
$d \in \mathbb Z^+$.  

\section {\bf Background on Moving Averages}\label{IntroMovAve}

Consider a sequence of pairs $(v_n,L_n)$ where $(v_n)$ is a sequence in $\mathbb Z$ and $(L_n)$ is an increasing sequence in $\mathbb Z^+$.  The associated moving averages are the continuous, linear operators  on $L^1(\mu)$ given by $M(v_n,L_n)^Tf = \frac 1{L_n}\sum\limits_{k=v_n+1}^{v_n+L_n} f \circ T^k$ for a function $f\in L^1(\mu)$.  Notice that $M(0,n)^T (f) = \frac 1n\sum\limits_{k=1}^n f \circ T^k$ is the classical ergodic average for $T$ and $f$.

Since we are assuming that $L_n$ is increasing, we have $L_n\to \infty$.  If $T$ is ergodic, it is not difficult to see using $T$-coboundaries that these averages converge to $\int_X f\, \, d\mu$ in $L^p$-norm for all $f \in L^p(\mu), 1 \le p < \infty$.  Indeed, suppose $f = g - g\circ T$ is a coboundary with a {\em transfer function} $g \in L^\infty(\mu)$.  Then $\|M(v_n,L_n)^Tf\|_\infty \le 2\|g\|_\infty/L_n$.  So for this class of $T$-coboundaries, it is clear that $M(v_n,L_n)^Tf$ converges to zero in $L^\infty$-norm, and hence also in $L^p$-norm for all $p, 1 \le p \le \infty$.  For $1 \le p < \infty$, this class of $T$-coboundaries is $L^p$-norm dense in the orthogonal complement of the $T$-invariant functions in $L^p(\mu)$, which are the constant functions if $T$ is ergodic. Hence, the moving averages always converge in $L^p$-norm, and if $T$ is ergodic they always converge in $L^p$-norm to $\int_X f \, \, d\mu$ for all $f\in L_p(\mu), 1 \le p < \infty$.

However, despite norm convergence being easily resolved, pointwise almost everywhere (a.e.) behavior is much more complicated.  The main theorem in Bellow, Jones, and Rosenblatt~\cite{BJR} characterizes when these averages converge a.e. for all $f\in L^1(\mu)$.    The criterion is a geometric counting condition called the {\em Cone Condition}.  One takes the vertical cones in $\mathbb R^2$ with  vertex at $(v_n,L_n)$ and a vertex angle of $90$ degrees.  Then for a given $\lambda > 0$,  let $C(\lambda)$ be the one-dimensional Lebesgue measure of the set of $(x,\lambda)$ with $x\in \mathbb R$ which are in at least one of these cones.  The Cone Condition states that for some constant $K$, $C(\lambda) \le K \lambda$ for all $\lambda > 0$.  The  basic theorem is that there is  a.e. convergence of the moving averages $M(v_n,L_n)^Tf$ for all $f\in L^1(\mu)$ and all $T$ if and only if the Cone Condition holds.  Also, the Cone Condition holds if and only if there is a.e. convergence of the  moving averages on the smallest Lebesgue space $L^\infty(\mu)$ for some ergodic map $T$, and when the Cone Condition fails, then there is {\em strong sweeping out} for the moving averages.  See \cite{BJR} for details.

For example, the Cone Condition fails for $(n^2, n)$ as the pairs $(v_n,L_n)$.   It also fails for a rare subsequence of these, e.g.  using just of the pairs $(4^n,2^n)$ with $n \ge 1$.  However, once one takes a hyper-lacunary subsequence, then Cone Condition holds.  Indeed, taking $(v_n,L_n) = (2^{2^{n+1}},2^{2^n})$ for $n \ge 1$, the averages $(M(v_n,L_n)^Tf)$ converge a.e. for all of $L^1(\mu)$.

Given this, the interesting case is when the Cone Condition fails.  Then for most functions we do not have a.e. convergence.  Indeed, it can be shown that there is a dense, $G_\delta$ set of functions $f\in L^1(\mu)$ such that $\limsup\limits_{n\to \infty} |M(v_n,L_n)^Tf| = \infty$ a.e.  These are {\em bad functions} i.e. functions for which a.e. convergence fails.   But what can be said about
the {\em good functions}, the subset of the complement of the bad functions for which there is a.e. convergence to $\int_X f\, \, d\mu$?  This is empirically an important question if one thinks of this averaging process in applications.  Then any choice of a function (in some sense the template for the data) is essentially only a rough approximation of the general case of bad functions. In fact, it is perhaps actually a good function, for which a.e. convergence occurs!  Indeed, as above if $f$ is a $T$-coboundary $f = g - g\circ T$ with $g \in L^\infty(\mu)$, then we certainly do have a.e. convergence, and this class is dense in the mean-zero functions in $L^1(\mu)$.

In this article, we consider a variety of questions related to characterizing the class of good functions when the Cone Condition fails.  For example, fix the ergodic map $T$.  In the case of a moving average where the Cone Condition fails, what is a (constructive) characterization of the good functions: the functions $f\in L^1(\mu)$ for which $(M(v_n,L_n)^Tf)$ converges a.e.?  Is it always more than just the coboundary functions?  How does it depend on the nature of the mapping $T$?  Moreover, what happens in the polar case, if instead of fixing $T$, we fix $f$ and let $T$ vary?

\medskip

\section {\bf Basics of Moving Averages}\label{basics}

Consideration of moving averages in ergodic theory was given its legitimate start in the article by Akcoglu and del Junco~\cite{AdJ}, which corrected a claim in Belley~\cite{Belley} that all moving averages converge a.e.
Akcoglu and del Junco showed that for any ergodic $T$, there is some measurable set $E$ such that the averages $M(n^2,n)^T 1_E$ fail to converge a.e.

More generally, here are some basic properties of moving averages that are worth keeping in mind.  These properties will be important for the constructions used in this article.  These results all follow from the Cone Condition.  Some are specifically noted in ~\cite{BJR}.

\begin{prop}  Given a moving average $M(v_n,L_n)^T$, there is a subsequence $(v_{n_m},L_{n_m})$ which satisfies the Cone Condition.  Hence, the moving averages $(M(v_{n_m},L_{n_m})^T (f))$ converge a.e. to $\int_X f\, \, d\mu$ for every ergodic map $T$ and every $f\in L^1(\mu)$.
\end{prop}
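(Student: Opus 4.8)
The plan is to extract, from any given sequence of pairs $(v_n,L_n)$ with $L_n$ strictly increasing, a subsequence that is ``hyper-lacunary'' in exactly the sense that makes the Cone Condition transparent. Concretely, I would build indices $n_1 < n_2 < \cdots$ recursively so that, writing $a_m = v_{n_m}$ and $\ell_m = L_{n_m}$, the cones grow fast enough that consecutive cones are essentially disjoint at every height and their total horizontal footprint at height $\lambda$ is controlled by the single largest relevant cone. The natural conditions to impose are $\ell_{m+1} \ge 2\ell_m$ (geometric growth of the lengths, possible since $L_n\to\infty$) together with a separation condition on the vertices such as $a_{m+1} - \ell_{m+1} > a_m + \ell_m + \ell_m$, i.e.\ the $90^\circ$ cone above vertex $(a_{m+1},\ell_{m+1})$ does not meet, at any height below $\ell_m$, the cone above $(a_m,\ell_m)$; both can be arranged simultaneously by choosing $n_{m+1}$ large enough at each stage, since $L_n$ is strictly increasing (hence unbounded) and the $v_n$ are integers.

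The key computation is then to bound $C(\lambda)$, the Lebesgue measure of the union over $m$ of the horizontal slices at height $\lambda$ of the cones with vertex $(a_m,\ell_m)$ and vertex angle $90^\circ$. A cone with vertex $(a,\ell)$ contributes a slice at height $\lambda$ only for $\lambda \ge \ell$, and that slice is an interval of length at most $2(\lambda - \ell) \le 2\lambda$; moreover the geometric condition $\ell_{m+1}\ge 2\ell_m$ means that for a fixed $\lambda$ only boundedly many consecutive indices $m$ have $\ell_m \in [\lambda/2,\lambda]$, while the separation condition forces the slices coming from indices with $\ell_m \le \lambda/2$ (resp.\ the relative positions of the vertices) to be pairwise disjoint. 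Summing the lengths $2(\lambda - \ell_m)$ over the finitely many contributing $m$ and using the geometric decay $\ell_m \le \ell_{\text{max}}, \ell_{\text{max}}/2, \ell_{\text{max}}/4,\dots$ gives a geometric series bounded by a constant multiple of $\lambda$, hence $C(\lambda)\le K\lambda$ for a universal $K$. This is the Cone Condition for the subsequence $(v_{n_m},L_{n_m})$.

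Once the Cone Condition is verified for the subsequence, the conclusion is immediate from the Bellow--Jones--Rosenblatt theorem quoted in Section~\ref{IntroMovAve}: the Cone Condition implies a.e.\ convergence of $M(v_{n_m},L_{n_m})^T f$ for every $f\in L^1(\mu)$ and every ergodic $T$, and since $L_{n_m}\to\infty$ the limit is the ergodic average, namely $\int_X f\,d\mu$ when $T$ is ergodic.

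The main obstacle is the bookkeeping in the second step: one must be careful that the two recursive conditions (geometric growth of $\ell_m$ and vertex separation) really do make all but $O(1)$ of the height-$\lambda$ slices pairwise disjoint, and that the overlapping ones are few enough that their total length is still $O(\lambda)$. A clean way to handle this is to strengthen the separation so that at height $\lambda$ the cone with vertex height $\ell_m$ can only overlap the cone with vertex height $\ell_{m+1}$ (never a cone two or more steps away), reducing the count of overlaps at each height to at most two; then disjointness plus the geometric series does the rest. Everything else — the existence of the indices $n_m$, the passage to the limit — is routine given that $L_n$ is strictly increasing.
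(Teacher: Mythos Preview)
Your approach has two genuine gaps.

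First, the vertex-separation condition $a_{m+1} - \ell_{m+1} > a_m + 2\ell_m$ cannot in general be enforced. You are only choosing indices $n_m$, so $a_m = v_{n_m}$ is dictated by the given sequence $(v_n)$, over which you have no control. If, for instance, $v_n = 0$ for all $n$, your inequality reads $-\ell_{m+1} > 2\ell_m$, which is impossible. The phrase ``the $v_n$ are integers'' does nothing here; nothing in the hypotheses forces $v_n$ to be unbounded or even nonconstant.

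Second, even granting separation, your ``geometric series'' bound is in the wrong direction. At height $\lambda$ the contributing indices are those $m$ with $\ell_m \le \lambda$; under $\ell_{m+1}\ge 2\ell_m$ there are roughly $\log_2(\lambda/\ell_1)$ of them, and
\[
\sum_{m:\,\ell_m\le\lambda} 2(\lambda-\ell_m) \;\ge\; 2\lambda\cdot\#\{m:\ell_m\le\lambda\} - 4\lambda
\]
is of order $\lambda\log\lambda$, not $O(\lambda)$: the geometric decay is in $\ell_m$, but you are summing $\lambda-\ell_m$. Making the slices pairwise disjoint only hurts you, since $C(\lambda)$ is the measure of the \emph{union} and disjointness forces that measure to equal the full sum.

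The fix is to impose a condition purely on the lengths, which you \emph{can} control: having chosen $n_1,\dots,n_m$, pick $n_{m+1}$ with $\ell_{m+1} = L_{n_{m+1}} > \max\bigl(|a_1|,\dots,|a_m|,\ell_m\bigr)$; this is possible since $L_n\to\infty$. Then for $\lambda\in[\ell_M,\ell_{M+1})$ every earlier vertex has $|a_m| < \ell_M \le \lambda$, so each slice for $m<M$ lies in $[-2\lambda,2\lambda]$; their union therefore has measure at most $4\lambda$, and cone $M$ adds at most $2\lambda$ more, giving $C(\lambda)\le 6\lambda$. The paper itself does not write out this argument, deferring instead to \cite{BJR}, but this is the intended mechanism.
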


\begin{rem}  The contrast here with other norm convergent stochastic processes is clear.  There would always be a subsequence, depending on $f$, for which there would be a.e. convergence.  But here there is a subsequence which makes every $f\in L^1(\mu)$ good for every map $T$.  As a contrast, take a sequence $(x_n)$ of non-zero real numbers with
$\lim\limits_{n\to \infty} x_n = 0$.  Consider the operators $T_n(f)(x) = f(x_n+x)$ for all $f\in L^1(\mathbb R)$.  These converge in $L^1$-norm.  But every subsequence is {\em strongly sweeping out}.  Here strongly sweeping out for a stochastic process $(T_n)$ means that there is a measurable set $A$ such that $\limsup\limits_{n\to \infty} T_n(1_A) = 1$ a.e. and $\liminf\limits_{n\to \infty} T_n(1_A) = 0$ a.e. The same would be true if $T_n$ is the Ces\`aro averages of these translations.  For history and information about these results, see Akcoglu et al~\cite{AdelJLee}, Bellow~\cite{Bellow}, and Bourgain~\cite{Bourgain}.
\qed \end{rem}

Another important principle is this one.

\begin{prop} \label{spreadbad} Given any sequence $(L_n)$ in $\mathbb Z^+$, there exists some $(v_n)$ such that the moving averages $M(v_n,L_n)^T$ fails the Cone Condition.
\end{prop}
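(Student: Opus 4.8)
The plan is to build $(v_n)$ by ``spreading out'' the cone vertices in consecutive blocks of increasing size, arranged so that at a carefully chosen height the horizontal slices coming from one block are pairwise disjoint yet each has length comparable to that height. Recall the geometry: a vertical cone of vertex angle $90^\circ$ with vertex $(v_n,L_n)$ meets the horizontal line at height $\lambda$ precisely when $\lambda\ge L_n$, and then in the interval $[\,v_n-(\lambda-L_n),\,v_n+(\lambda-L_n)\,]$ of length $2(\lambda-L_n)$. Hence $C(\lambda)$ is the Lebesgue measure of $\bigcup_{n:\,L_n\le\lambda}[\,v_n-(\lambda-L_n),\,v_n+(\lambda-L_n)\,]$, and the Cone Condition asks for a constant $K$ with this measure $\le K\lambda$ for all $\lambda>0$.

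First I would partition $\Z^+$ into consecutive blocks $B_k=\{\tfrac{k(k-1)}{2}+1,\dots,\tfrac{k(k+1)}{2}\}$, so that $|B_k|=k$ and the $B_k$ exhaust $\N$ in their natural order. For each $k$ set $\lambda_k:=2\max\{L_n:n\in B_k\}$, a positive even integer, and enumerate $B_k=\{m_1<\dots<m_k\}$. Then define $v_{m_j}:=W_k+3j\lambda_k$ for $j=1,\dots,k$, where $W_k$ is any integer larger than every $v$-value already assigned in $B_1,\dots,B_{k-1}$ (this last precaution is purely cosmetic). This completely determines the integer sequence $(v_n)_{n\ge 1}$.

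Next I would verify the failure of the Cone Condition directly. Fix $k$ and work at height $\lambda=\lambda_k$. For each $n\in B_k$ we have $L_n\le\lambda_k/2$, so the cone with vertex $(v_n,L_n)$ contributes at height $\lambda_k$ an interval centered at $v_n$ of length $2(\lambda_k-L_n)\ge\lambda_k$, which in particular lies in $[\,v_n-\lambda_k,\,v_n+\lambda_k\,]$. Since the centers $v_{m_1},\dots,v_{m_k}$ are at least $3\lambda_k$ apart, these $k$ intervals are pairwise disjoint, so
\[
C(\lambda_k)\ \ge\ \sum_{j=1}^{k}2(\lambda_k-L_{m_j})\ \ge\ k\,\lambda_k .
\]
As $k$ is arbitrary, no constant $K$ satisfies $C(\lambda)\le K\lambda$ for all $\lambda>0$; thus $(v_n,L_n)$ fails the Cone Condition, and the moving averages $M(v_n,L_n)^T$ fail it as claimed (cf. \cite{BJR}).

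The only genuinely delicate point is that a single sequence $(v_n)$ must witness the failure at all of the infinitely many scales $\lambda_k$ at once; and when $L_n\to\infty$ one cannot make do with a bounded family of heights, since only finitely many cones reach any fixed height. The block construction handles this cleanly: the choice of $\lambda_k$ and the placement of $v_n$ for $n\in B_k$ depend only on $B_k$, while cones from the other blocks can only enlarge $C(\lambda_k)$, never diminish it, so the bound $C(\lambda_k)\ge k\lambda_k$ persists regardless of how the rest of the sequence is arranged. I expect this coordination across scales to be the main conceptual obstacle; everything else is elementary geometry.
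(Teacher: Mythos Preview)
Your proof is correct. The paper does not actually prove this proposition: it is listed among several basic facts that ``all follow from the Cone Condition,'' with a pointer to \cite{BJR}, and no argument is given. Your block construction is a clean, self-contained verification: partitioning $\N$ into blocks $B_k$ of size $k$, choosing $\lambda_k=2\max_{n\in B_k}L_n$, and spacing the $v_n$ for $n\in B_k$ by $3\lambda_k$ yields $k$ disjoint slices of length $\ge\lambda_k$ at height $\lambda_k$, so $C(\lambda_k)\ge k\lambda_k$ and no uniform bound $C(\lambda)\le K\lambda$ can hold. The observation that cones from other blocks can only increase $C(\lambda_k)$ is exactly what makes the single sequence $(v_n)$ work at every scale simultaneously.
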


It is a central fact that for moving averages the $L^p$-space is in some sense not important.

\begin{prop}\label{allornone} Given an ergodic map $T$ and a moving average given by $(v_n,L_n )$, if there is some $p_0, 1 \le p_0 \le \infty$ such that all functions in $L^{p_0}(\mu)$ are good, then the same is true for all other $p, 1 \le p \le \infty$ and all other ergodic maps.
\end{prop}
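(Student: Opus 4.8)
The plan is to reduce everything to the Cone Condition by way of the equivalences quoted from \cite{BJR}. The key observation is that the hypothesis, although stated for $L^{p_0}(\mu)$, is really a statement about $L^\infty(\mu)$: since $(X,\B,\mu)$ is a probability space we have $\|f\|_p\le\|f\|_q$ whenever $p\le q$, so $L^\infty(\mu)\subseteq L^{p_0}(\mu)\subseteq L^1(\mu)$. Consequently, if every $f\in L^{p_0}(\mu)$ is good for the fixed ergodic $T$ and the fixed pair $(v_n,L_n)$, then in particular $M(v_n,L_n)^Tf$ converges a.e. for every $f\in L^\infty(\mu)$.

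Next I would invoke the characterization from \cite{BJR} that the Cone Condition for $(v_n,L_n)$ is equivalent to a.e. convergence of the moving averages on $L^\infty(\mu)$ for \emph{some} ergodic map. Our fixed $T$ is exactly such a witness, so the Cone Condition holds for $(v_n,L_n)$. The crucial point now is that the Cone Condition is a purely geometric/combinatorial property of the pairs $(v_n,L_n)$ and makes no reference to $T$. Hence the forward direction of the main theorem of \cite{BJR} applies and gives a.e. convergence of $M(v_n,L_n)^Sf$ for every $f\in L^1(\mu)$ and every measure-preserving $S$; when $S$ is ergodic the limit is $\int_X f\,d\mu$, which follows from the $L^1$-norm convergence recorded in Section~\ref{IntroMovAve} together with passage to an a.e.\ convergent subsequence. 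Since $L^p(\mu)\subseteq L^1(\mu)$ for all $p$ with $1\le p\le\infty$, every function in every $L^p(\mu)$ is good for every ergodic map, which is the conclusion.

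One could equally run the argument contrapositively through strong sweeping out: were the Cone Condition to fail, then by \cite{BJR} the moving average $M(v_n,L_n)^T$ would be strongly sweeping out, hence there would be a set $A$ with $1_A\in L^\infty(\mu)\subseteq L^{p_0}(\mu)$ along which convergence fails, contradicting the hypothesis. There is essentially no obstacle here beyond selecting the correct equivalence from \cite{BJR}; the only point deserving a word of care is that being ``good'' includes convergence to the mean, and this is supplied by the norm convergence already established.
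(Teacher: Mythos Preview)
Your argument is correct and is precisely the approach the paper indicates: the proposition is listed among the ``basic properties of moving averages'' that ``all follow from the Cone Condition,'' citing \cite{BJR}, and you have spelled out exactly that deduction---pass from $L^{p_0}$ down to $L^\infty$, invoke the $L^\infty$-characterization of the Cone Condition for a single ergodic map, then use the forward implication to get convergence on all of $L^1$ for every ergodic map. The paper gives no further proof beyond this attribution, so your write-up is in fact more detailed than what appears there.
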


\begin{rem} For general reasons, it is not a surprise that what happens for one ergodic map is mirrored by what happens for the full class of ergodic maps.  But as for the $L^p$-space, this result is quite different from the ones that appear in Bellow~\cite{Bellow2} and Reinhold~\cite{Reinhold} where the averages are for powers of a map, but the good $L^p$-spaces depend in an essential fashion on which powers are being used.
\qed \end{rem}

In addition, when the Cone Condition fails, the following extreme form of the failure of a.e. convergence occurs: the generic set is strongly sweeping out.

\begin{prop}\label{swo} If the Cone Condition fails, then for all ergodic maps, the generic measurable set $A \in \beta$ has $\limsup\limits_{n\to \infty} M(v_n,L_n)^T1_A = 1$ a.e. and
$\liminf\limits_{n\to \infty} M(v_n,L_n)^T 1_A = 0$ a.e.
\end{prop}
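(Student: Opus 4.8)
This is a ``strong sweeping out'' assertion, and the natural plan is to deduce it from the analysis in \cite{BJR} by a Baire-category argument in the space of measurable sets. Fix an ergodic $T$ and give $\B$, modulo $\mu$-null sets, the complete metric $d(A,B)=\mu(A\triangle B)$; on it every individual operator $A\mapsto M(v_n,L_n)^T\un_A$ is continuous. For rationals $\delta>0$ and integers $N\ge 1$ let $\mathcal O^{+}_{\delta,N}$ be the set of $A$ with $\mu\{\,\sup_{n\ge N}M(v_n,L_n)^T\un_A>1-\delta\,\}>1-\delta$, and $\mathcal O^{-}_{\delta,N}$ the set of $A$ with $\mu\{\,\inf_{n\ge N}M(v_n,L_n)^T\un_A<\delta\,\}>1-\delta$. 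After a further finite truncation of the sup/inf these are countable unions of open sets --- openness coming from $L^1$-continuity of the truncated maximal operators together with lower semicontinuity of $g\mapsto\mu\{g>t\}$ --- hence open. Moreover, because $0\le M(v_n,L_n)^T\un_A\le 1$, any $A$ lying in $\mathcal G:=\bigcap_{\delta,N}\big(\mathcal O^{+}_{\delta,N}\cap\mathcal O^{-}_{\delta,N}\big)$ automatically satisfies $\limsup_n M(v_n,L_n)^T\un_A=1$ and $\liminf_n M(v_n,L_n)^T\un_A=0$ a.e. Since each $\mathcal O^{\pm}_{\delta,N}$ is open, by the Baire category theorem it suffices to prove each is dense; then $\mathcal G$ is a dense $G_\delta$ and the sweeping-out property is generic. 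That density is the heart of the matter and is where the failure of the Cone Condition is used.

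For density I would use the quantitative failure: $C(\lambda)/\lambda$ is unbounded, so there are levels $\lambda$ (which may be taken arbitrarily large, and using cones of arbitrarily large index) at which a finite union of the vertical $90^\circ$ cones with vertices $(v_n,L_n)$ covers a horizontal segment of length $\gg\lambda$. Transferring this planar picture to the action of $T$ --- a Conze-type argument: realize the finitely many relevant averages on a long Rokhlin tower for $T$ while placing an arbitrary target set $B$ on the complement --- one manufactures, for any $\delta$, any $N$, and any $B$, a set $A$ with $d(A,B)$ as small as desired for which, among the averages $M(v_n,L_n)^T\un_A$ with $n\ge N$, some exceeds $1-\delta$ on a set of measure $>1-\delta$ while some other falls below $\delta$ on a set of measure $>1-\delta$ (the simultaneous smallness being forced by measure preservation once the mass has been concentrated by the large normalized coverage). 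This puts $A$ in $\mathcal O^{+}_{\delta,N}\cap\mathcal O^{-}_{\delta,N}$, giving density. This estimate is exactly the $\delta$-sweeping out bound proved in \cite{BJR}; alternatively, one invokes the dichotomy (see \cite{AdelJLee}) that $\delta$-sweeping out for one ergodic map forces strong sweeping out for every ergodic map, which also supplies the uniformity in $T$ demanded by the statement.

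The main obstacle is precisely that density step: converting ``$C(\lambda)\le K\lambda$ fails for every $K$'' into an honest construction, in a prescribed neighborhood of an arbitrary set, of an indicator whose moving averages are driven close to $1$ on most of the space at some times and close to $0$ on most of the space at other times, uniformly enough to survive the truncations $N$. Everything else --- completeness of $(\B,d)$, openness of the $\mathcal O^{\pm}_{\delta,N}$, the bookkeeping that turns the tail sup/inf conditions into $\limsup=1$ and $\liminf=0$ a.e., and the passage from a single ergodic map to all of them --- is standard soft analysis. I would therefore organize the proof as: (i) set up the category framework and reduce to density of the $\mathcal O^{\pm}_{\delta,N}$; (ii) obtain the $\delta$-sweeping out construction from the failing Cone Condition as in \cite{BJR}; (iii) upgrade to strong sweeping out for every ergodic $T$ via the Conze principle and the $\delta$-to-strong dichotomy of \cite{AdelJLee}.
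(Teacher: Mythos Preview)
The paper does not actually prove this proposition: it is stated as background, with the surrounding text noting that ``these results all follow from the Cone Condition'' and that some ``are specifically noted in~\cite{BJR}.'' So there is no in-paper argument to compare against; the authors simply invoke~\cite{BJR}.

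Your plan is the standard route and is sound. The reduction to a Baire-category argument in $(\B,d)$, the identification of the open sets $\mathcal O^{\pm}_{\delta,N}$, and the observation that the only nontrivial point is their density are all correct. You also locate the substance in the right place: the density step is exactly the $\delta$-sweeping-out estimate that \cite{BJR} extracts from the failure of the Cone Condition, and the upgrade from $\delta$-sweeping out to strong sweeping out (and uniformity over ergodic $T$) is precisely what the transference/dichotomy machinery in \cite{AdelJLee} supplies. One small caution: your parenthetical that ``simultaneous smallness is forced by measure preservation once the mass has been concentrated'' is not quite how the $\liminf$ side is obtained in practice---one usually builds the set so that its complement witnesses the large averages (or runs the construction twice), rather than appealing to a mass-balance argument---but this is a detail inside step~(ii) and does not affect the overall architecture. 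In short, your proposal is a faithful expansion of what the paper leaves to the citation.
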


\section {\bf When the map is fixed}\label{basicsmap}

In this section we consider results where we have fixed the ergodic map $T$, and look to understand the conditions on the function that makes it good.  If the Cone Condition occurs, then it does not matter what $T$ is or what $f$ is, since the associated moving averages will always converge a.e.  It is when we have the unhappy case that the Cone Condition fails that interesting results emerge. 

First, Proposition~\ref{spreadbad} gives immediately the following showing that every $f$ is good for some bad moving average.

\begin{prop}\label{allfgood}  Suppose $f\in L^1(\mu)$ is mean-zero and $T$ is ergodic.  Fix $(L_m)$.  Then there exists $(L_{m_n})$ such that for all $(v_n)$, the moving averages $M(v_n,L_{m_n})^T f$ converge a.e. to $0$.  With appropriate choice of $(v_n)$ this gives $(v_n,L_{m_n})$ which fails the Cone Condition but $f$ is good for that moving average.
\end{prop}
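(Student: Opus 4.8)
The plan is to reduce everything to the ordinary ergodic averages by exploiting the fact that the distribution of a moving average depends only on its length. Writing
\[
M(v,L)^T f(x) = \frac1L\sum_{i=0}^{L-1} f(T^{v+i}x) = \bigl(M(0,L)^T f\bigr)(T^v x),
\]
we see that $M(v,L)^T f = \bigl(M(0,L)^T f\bigr)\circ T^{v}$, and since $T^{v}$ preserves $\mu$ this yields
\[
\mu\bigl\{\,|M(v,L)^T f|\ge\delta\,\bigr\} = \mu\bigl\{\,|M(0,L)^T f|\ge\delta\,\bigr\}
\]
for every $v\in\mathbb Z$, every $L\in\mathbb Z^+$, and every $\delta>0$. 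So the sizes of the super-level sets of a moving average do not see the shift $v_n$ at all --- only the length $L_n$.

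Next I would invoke Birkhoff's pointwise ergodic theorem: since $f$ is mean-zero and $T$ is ergodic, $M(0,m)^T f\to 0$ a.e., hence (the space has finite measure) $M(0,L_m)^T f\to 0$ in measure, i.e. $\mu\{|M(0,L_m)^T f|\ge\delta\}\to 0$ as $m\to\infty$ for each $\delta>0$. A diagonal extraction then produces a subsequence $(L_{m_n})$ with $\mu\{|M(0,L_{m_n})^T f|\ge 2^{-k}\}\le 2^{-n}$ whenever $k\le n$; in particular $\sum_n\mu\{|M(0,L_{m_n})^T f|\ge\delta\}<\infty$ for every $\delta>0$, that is, $M(0,L_{m_n})^T f$ converges completely to $0$. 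The point to stress is that this subsequence of lengths is chosen using only $f$ and $T$.

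Combining the two steps, for an \emph{arbitrary} sequence $(v_n)$ we get
\[
\sum_n \mu\bigl\{\,|M(v_n,L_{m_n})^T f|\ge\delta\,\bigr\} = \sum_n \mu\bigl\{\,|M(0,L_{m_n})^T f|\ge\delta\,\bigr\}<\infty
\]
for every $\delta>0$. By the Borel--Cantelli lemma, for each fixed $j$ we have $|M(v_n,L_{m_n})^T f(x)|<1/j$ for all but finitely many $n$, for a.e. $x$; intersecting over $j\in\mathbb N$ gives $M(v_n,L_{m_n})^T f\to 0$ a.e. Since $(v_n)$ was arbitrary, this proves the first assertion. For the final sentence, apply Proposition~\ref{spreadbad} to the now-fixed sequence $(L_{m_n})$ to obtain positions $(v_n)$ for which $M(v_n,L_{m_n})^T$ fails the Cone Condition; by what was just proved, $f$ is nonetheless good for this moving average.

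I do not anticipate a genuine obstacle here: the argument is short, and the only thing that needs care is the order of the quantifiers --- the subsequence of lengths must be frozen before the shifts are chosen --- which is exactly what the translation-invariance identity of the first step is designed to handle, since it makes the summable tail bound hold uniformly over all choices of $(v_n)$.
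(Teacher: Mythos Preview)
Your argument is correct and follows the same overall strategy as the paper: exploit that the distribution of $M(v,L)^Tf$ depends only on $L$, extract a subsequence of lengths with a summability property, and conclude a.e.\ convergence uniformly over all shift sequences $(v_n)$, then invoke Proposition~\ref{spreadbad}. The one difference is in which summability you use. The paper works with $L^1$-norms: from $\|M(0,L_m)^Tf\|_1\to 0$ it picks $(L_{m_n})$ with $\sum_n \|M(0,L_{m_n})^Tf\|_1<\infty$, observes $\|M(v_n,L_{m_n})^Tf\|_1=\|M(0,L_{m_n})^Tf\|_1$, and deduces $\sum_n |M(v_n,L_{m_n})^Tf|<\infty$ a.e.\ directly. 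This avoids your diagonal extraction over $\delta$ and the Borel--Cantelli step. On the other hand, your route via complete convergence dovetails nicely with the characterization in Proposition~\ref{charactergood}, so each version has its own small advantage.
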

\begin{proof}  Since $M(0,L_m)f$ converges to $0$ in $L^1$-norm, there exists $(L_{m_n})$ such that
\[\sum\limits_{n=1}^\infty \|M(0,L_{m_n})^T f\|_1 < \infty.\]
But then
for any $(v_n)$, we have
\[\|M(v_n,L_{m_n})^T f\|_1 = \|M(0,L_{m_n})^T f\|_1.\]
Hence, $\sum\limits_{n=1}^\infty \|M(v_n,L_{m_n})^T f\|_1 < \infty$ too, and so
$\sum\limits_{n=1}^\infty |M(v_n,L_{m_n})^T f(x)|< \infty$ for a.e. $x$.  So
$M(v_n,L_{m_n})^T f(x) \to 0$ as $n\to \infty$ for a.e. $x$.  We use Proposition~\ref{spreadbad} to complete the result.
\end{proof}

\begin{rem}  The choice of $(L_{n_m})$ here depends on $T$.  Indeed, for a given $f$ and any fixed sequence $(L_n)$, including of course the subsequence chosen here, there is a dense $G_\delta$ set of ergodic maps $\sigma$ such that $\sum\limits_{n=1}^\infty \|M(0,L_{m_n})^\sigma f\|_1 = \infty$.  But moreover, if we have fixed $(L_n)$ (or say $(L_{m_n}))$, there is actually a dense $G_\delta$ set of ergodic maps $\sigma$ such that for each one there is some $(v_n)$ such that the function $f$ is bad for the moving average $M(v_n,L_n)^T\sigma f$.  See Section~\ref{basicsdata}.
\qed \end{rem}

In addition, for a given ergodic map $T$, we have already observed that the $T$-coboundaries provide a dense class of good functions for any choice of $(v_n,L_n)$.    We could use either transfer functions in $L^\infty(\mu)$ or in the larger class $L^1(\mu)$.  The difference is that with bounded transfer functions the moving averages always converge in $L^\infty$-norm to $0$.  However, if the transfer function is merely integrable, we need a different argument.  We need to have $\frac 1{L_n}(f \circ T^{v_n+1} - f\circ T^{v_n+L_n+1}) \to 0$ a.e. and the easiest way to guarantee that without more information about $f$, other than that it is integrable, is to use the fact that for any $(w_n)$,
\[\sum\limits_{n=1}^\infty \mu \{|f\circ T^{w_n}| \ge L_n\} = \sum\limits_{n=1}^\infty \mu \{|f| \ge L_n\} \le \sum\limits_{n=1}^\infty \mu \{|f| \ge n\} < \infty.\]

However, there are always more functions than the $T$-coboundaries that are good functions.  See the article by Derriennic and Lin~\cite{DL}.   In particular, Corollary 2.15 in ~\cite{DL} gives for any $\alpha, 0 < \alpha < 1$, there is $f \in L^p(\mu), 1 \le p < \infty$ such that $\lim\limits_{n\to \infty} n^\alpha\|M(0,n)^Tf\|_p = 0$.

\begin{prop}\label{normseriesconv}  For any ergodic map $T$ and $p, 1 < p < \infty$, there is $f \in L^p(\mu)$ such that
 $\sum\limits_{n=1}^\infty \|M(0,n)^Tf\|_p^p < \infty$. Moreover, there are such functions that are not $T$-coboundaries.  These functions are good for all moving averages with respect to $T$.
\end{prop}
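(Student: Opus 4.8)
The plan is to build $f$ directly from its Fourier-type expansion relative to the unitary operator $U_T$ induced by $T$ on $L^2(\mu)$, exploiting the quantitative rate of decay of ergodic averages supplied by Corollary~2.15 in Derriennic--Lin~\cite{DL}. First I would fix $\alpha$ with $1/2 < \alpha < 1$ large enough that $p(1-2\alpha)+p < -1$ fails to be an obstacle — more precisely, I want $f \in L^p(\mu)$ with $\|M(0,n)^T f\|_p = o(n^{-\alpha})$, which is exactly what \cite{DL} provides once $\alpha < 1$; then $\sum_n \|M(0,n)^T f\|_p^p \le C \sum_n n^{-\alpha p}$, and this converges as soon as $\alpha p > 1$, i.e. $\alpha > 1/p$. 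Since $p > 1$ we may choose such an $\alpha \in (1/p, 1)$, so the series convergence is automatic from the cited estimate. This handles the first assertion.

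For the ``not a coboundary'' clause I would argue by a spectral/growth dichotomy. If $f = g - g\circ T$ with $g \in L^p(\mu)$ (equivalently, with $g \in L^1(\mu)$, which is the relevant weak hypothesis), then $M(0,n)^T f$ telescopes to $\tfrac1n(g\circ T - g\circ T^{n+1})$, whose $L^p$-norm is $\le 2\|g\|_p/n$, so $\|M(0,n)^T f\|_p = O(1/n)$. Thus a $T$-coboundary with integrable transfer function always has ergodic averages decaying at least like $1/n$. To get a good function that is not a coboundary it therefore suffices to produce $f \in L^p(\mu)$ with $\sum_n \|M(0,n)^T f\|_p^p < \infty$ but with $\|M(0,n)^T f\|_p$ decaying \emph{slower} than $C/n$ along a subsequence — e.g. comparable to $n^{-\alpha}$ for our chosen $\alpha < 1$. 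One extracts this from \cite{DL} as well: the functions constructed there to make $n^\alpha\|M(0,n)^T f\|_p \to 0$ can be taken so that $\|M(0,n)^T f\|_p$ is not $O(n^{-\alpha'})$ for $\alpha' > \alpha$; alternatively, superpose countably many such functions at geometrically separated scales (a Baire-category or explicit lacunary-sum construction) so that the averages refuse to be $o(1/n)$. Either route gives $f$ good for $M(0,n)^T$ but not a $T$-coboundary.

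Finally, to pass from ``good for the standard averages'' to ``good for all moving averages with respect to $T$'', I would invoke the characterization featured in the introduction: $f$ is good for every moving average with $L_n$ increasing precisely when $M(0,n)^T f$ converges completely to $0$, i.e. $\sum_n \mu\{|M(0,n)^T f| \ge \delta\} < \infty$ for all $\delta > 0$. But $\sum_n \|M(0,n)^T f\|_p^p < \infty$ forces this via Markov's inequality, since $\mu\{|M(0,n)^T f| \ge \delta\} \le \delta^{-p}\|M(0,n)^T f\|_p^p$. Hence complete convergence holds, and the characterization yields a.e. convergence of $M(v_n,L_n)^T f$ for every increasing $(L_n)$ and every $(v_n)$.

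The main obstacle I anticipate is the non-coboundary assertion: the Derriennic--Lin estimate gives an \emph{upper} bound on the decay rate, and what I need is a matching \emph{lower} bound (or at least the failure of $O(1/n)$ decay) to separate $f$ from the coboundary space. Extracting or engineering that lower bound — whether by reading it off the \cite{DL} construction, by a dimension/spectral-measure argument showing the spectral measure of $f$ has too much mass near $1$ to be a coboundary, or by a category argument within the subspace $\{\sum \|M(0,n)^T f\|_p^p < \infty\}$ — is the delicate step; everything else is a routine application of Markov's inequality and the stated characterization.
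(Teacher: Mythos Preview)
Your argument for the existence of $f$ and for ``good for all moving averages'' is exactly the paper's: choose $\alpha\in(1/p,1)$, invoke Corollary~2.15 of \cite{DL} to get $n^\alpha\|M(0,n)^Tf\|_p\to 0$, and then pass to complete convergence via Markov. (The paper phrases the last step by pointing back to the proof of Proposition~\ref{allfgood} rather than forward to Proposition~\ref{charactergood}, but the content is identical.)

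The divergence is in the non-coboundary clause. The paper does not attempt any decay-rate dichotomy; it simply cites Proposition~2.2 of \cite{DL}, which already asserts that among the functions produced there are ones that are not $T$-coboundaries. Your proposed route has a real gap: the telescoping bound $\|M(0,n)^Tf\|_p\le 2\|g\|_p/n$ requires $g\in L^p$, and it is \emph{not} equivalent to $g\in L^1$ as you wrote --- a transfer function in $L^1\setminus L^p$ gives no control on $\|M(0,n)^Tf\|_p$. So your argument, even if the lower-bound step could be carried out, would only rule out coboundaries with $L^p$ transfer function, not the broader class. You correctly flagged the lower-bound extraction as the delicate point; the resolution is that \cite{DL} already did this work for you, and the paper just quotes it.
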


\begin{proof}  We just take $\alpha $ such that $\sum\limits_{n=1}^\infty (1/n^{p\alpha}) < \infty$.  Indeed, for $p > 1$, if we have $1/p < \alpha < 1$, this condition is satisfied.  Now using \cite{DL}, take $f \in L^p(\mu)$ such that $\lim\limits_{n\to \infty} n^\alpha\|M(0,n)^Tf\|_p = 0$, and so $\sum\limits_{n=1}^\infty \|M(0,n)^Tf\|_p^p < \infty$.  But then as in the proof of Proposition~\ref{allfgood}, this means that for any increasing $(L_n)$ and any $(v_n)$, such functions $f$ are good for the moving average $M(v_n,L_n)^Tf$.  In addition, Proposition 2.2 in ~\cite{DL} shows that there are functions produced with these conditions that are not $T$-coboundaries.
\end{proof}

\begin{rem}
The only drawback to the constructions in ~\cite{DL} is that in some ways the examples are related to coboundaries because they are produced using the clever idea of studying
{\em fractional coboundaries}.  But there are probably other constructions that would give even larger classes of functions that are good for the given $T$ and any moving average.
\qed \end{rem}

It turns out that we can characterize the good mean-zero functions here using a series condition.  They are just the functions for which $M(0,n)^Tf$ converges completely to zero.
\begin{prop} \label{charactergood} Suppose $T$ is ergodic.  Then a mean-zero $f\in L^1(\mu)$ is good for all moving averages $(M(v_n,L_n)^Tf)$ with increasing $L_n$ if and only if for all $\delta > 0$, we have
\[\sum\limits_{n=1}^\infty \mu \{|M(0,n)^Tf| \ge \delta\} < \infty.\]
\end{prop}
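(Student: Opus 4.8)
The plan is to build everything on the identity
\[
M(v_n,L_n)^T f \;=\; \big(M(0,L_n)^T f\big)\circ T^{v_n},
\]
which holds by reindexing $i=v_n+j$, so that $\frac1{L_n}\sum_{i=v_n+1}^{v_n+L_n}f(T^ix)=\frac1{L_n}\sum_{j=1}^{L_n}f\big(T^j(T^{v_n}x)\big)$. Since $T^{v_n}$ preserves $\mu$, this yields $\mu\{|M(v_n,L_n)^Tf|\ge\delta\}=\mu\{|M(0,L_n)^Tf|\ge\delta\}$ for every $\delta>0$: the distribution of a moving average depends only on its length. Sufficiency then follows in a line. Assuming complete convergence of $M(0,n)^Tf$ and fixing any $(v_n)$ and any strictly increasing $(L_n)$ in $\mathbb Z^+$, one gets for each $\delta>0$
\[
\sum_{n}\mu\{|M(v_n,L_n)^Tf|\ge\delta\}=\sum_n\mu\{|M(0,L_n)^Tf|\ge\delta\}\le\sum_{m=1}^\infty\mu\{|M(0,m)^Tf|\ge\delta\}<\infty,
\]
the middle step because the $L_n$ are distinct positive integers. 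Borel--Cantelli, and letting $\delta$ decrease through $1/k$, then force $M(v_n,L_n)^Tf\to0=\int_X f\,d\mu$ a.e., so $f$ is good.

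For necessity I would argue by contraposition: suppose there is $\delta_0>0$ with $\sum_n p_n=\infty$, where $p_n:=\mu(E_n)$ and $E_n:=\{|M(0,n)^Tf|\ge\delta_0\}$, and construct $v_1<v_2<\cdots$ so that, taking $L_k:=k$, the sets $A_k:=T^{-v_k}E_k=\{|M(v_k,k)^Tf|\ge\delta_0\}$ satisfy $\mu(\limsup_k A_k)>0$; this kills a.e.\ convergence of $M(v_k,k)^Tf$ for the strictly increasing length sequence $L_k=k$, contradicting goodness. The engine is the correlation average: for ergodic $T$ and $A,B\in\B$, $\frac1M\sum_{m=1}^M\mu(A\cap T^{-m}B)\to\mu(A)\mu(B)$, which is the von Neumann mean ergodic theorem together with ergodicity (invariant functions are constant). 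I would pick the $v_k$ inductively: given $v_1<\cdots<v_{k-1}$, consider $\Phi_k(m):=\sum_{j<k}\mu\big(E_k\cap T^{\,m-v_j}E_j\big)$, whose Ces\`aro average over $m$ (taken over a window where all exponents are positive) equals $p_k\sum_{j<k}p_j$; by Markov the set of $m$ with $\Phi_k(m)\le 2p_k\sum_{j<k}p_j$ has density at least $\tfrac12$, so one can choose $v_k>v_{k-1}$ with $\sum_{j<k}\mu(A_j\cap A_k)\le 2p_k\sum_{j<k}p_j$.

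With $S_N:=\sum_{k\le N}p_k\to\infty$ this control gives $\sum_{j,k\le N}\mu(A_j\cap A_k)=S_N+2\sum_{j<k\le N}\mu(A_j\cap A_k)\le S_N+4S_N^2$, and the Kochen--Stone (second-moment) Borel--Cantelli lemma yields
\[
\mu\Big(\limsup_k A_k\Big)\ge\limsup_N\frac{S_N^2}{S_N+4S_N^2}=\frac14>0,
\]
which is the desired contradiction.

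The step I expect to need the most care is exactly this inductive choice of $v_k$. The naive plan of making each $A_k$ nearly independent of each earlier $A_j$ individually fails, because the single integer $v_k$ must simultaneously push all of $v_k-v_1,\dots,v_k-v_{k-1}$ out of $k-1$ separate ``bad'' sets whose densities can add up past $1$. What saves the argument is to bound the \emph{aggregate} overlap $\sum_{j<k}\mu(A_j\cap A_k)$ in one stroke via $\Phi_k$, whose average is $p_k\sum_{j<k}p_j$ no matter how many earlier indices are present -- precisely the quantity the Kochen--Stone bound consumes. (Keeping the averaging window among the large integers makes every power of $T$ nonnegative, so no invertibility of $T$ is used; and one may note in passing that goodness of $f$ for $M(0,n)^T$ already forces $M(0,n)^Tf\to0$ a.e.\ and hence $p_n\to0$, though the construction above does not need this.)
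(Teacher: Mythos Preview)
Your argument is correct. The sufficiency direction is identical to the paper's. For necessity the paper takes a genuinely different route: rather than Kochen--Stone, it isolates a standalone lemma (due to Sawyer) asserting that whenever $T$ is ergodic and $\sum_n\mu(E_n)=\infty$, one can choose iterates $m_n$ so that almost every $x$ lies in $T^{m_n}E_n$ for infinitely many $n$. The proof of that lemma is a greedy covering: one processes the $E_n$'s in successive blocks, using ergodicity at each step to pick $m_n$ so that the translate hits at least half the remaining uncovered mass, and stops the $j$-th block once the union covers all but $1/j$ of $X$. Applied to $E_n=\{|M(0,n)^Tf|\ge\delta\}$ this gives $\mu(\limsup_k A_k)=1$ outright, hence divergence almost everywhere. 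Your second-moment approach instead controls the \emph{aggregate} pairwise overlap via the mean ergodic theorem and feeds it into Kochen--Stone, which only yields $\mu(\limsup_k A_k)\ge 1/4$; that already contradicts goodness, and the paper notes in a subsequent remark that $\limsup_n|M(v_n,n)^Tf|$ is $T$-invariant, so positive-measure divergence upgrades to a.e.\ divergence for free. The Sawyer covering is a touch more elementary and delivers the a.e.\ conclusion in one stroke, and the lemma is reusable elsewhere in the paper; your route is perhaps more familiar from probabilistic Borel--Cantelli arguments and makes the underlying mechanism (asymptotic decorrelation in mean) explicit.
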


To prove this result we need a divergent series lemma.  This first appeared in Sawyer~\cite{Sawyer}; see Lemma 2.  We present the proof so that it is clear how it works with an {\em ergodic family} being the powers of one ergodic mapping.

\begin{lem}\label{Div} Suppose $T$ is ergodic and $\sum\limits_{n=1}^\infty \mu(E_n) = \infty$.  Then there exists an increasing sequence $(m_n)$ in $\mathbb Z^+$ such $\sum\limits_{n=1}^\infty 1_{T^{m_n}E_n} = \infty$ a.e.  That is, for a.e. $x$,  we have $x \in T^{m_n}E_n$ infinitely often.
\end{lem}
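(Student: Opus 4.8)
The plan is to prove the lemma by a greedy covering argument powered by the mean ergodic theorem, organized into blocks so that a Borel--Cantelli argument closes the deal. The engine is the following consequence of ergodicity of $T$: for any measurable $A,B\subseteq X$, the Ces\`aro averages $\frac1N\sum_{m=0}^{N-1}\mu(T^{m}A\cap B)$ converge to $\mu(A)\mu(B)$ (apply the mean ergodic theorem to $1_A$, then pair in $L^2$ with $1_B$). In particular, for every $\delta>0$ and every threshold $M_0$ there exists $m>M_0$ with $\mu(T^{m}A\cap B)>\mu(A)\mu(B)-\delta$, since otherwise all sufficiently large terms of the average would be too small.

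Next I would isolate a one-block sublemma: given finitely many sets $F_1,\dots,F_k$, a starting index $M_0$, and $\delta>0$, there exist integers $M_0<m_1<\dots<m_k$ with
\[
\mu\Big(\bigcup_{j=1}^{k} T^{m_j}F_j\Big)\ \ge\ 1-\prod_{j=1}^{k}\big(1-\mu(F_j)\big)-k\delta .
\]
This is proved by induction on $k$: writing $U_{j-1}=\bigcup_{i<j}T^{m_i}F_i$ and $\beta_{j-1}=1-\mu(U_{j-1})$, use the observation above to choose $m_j>m_{j-1}$ (and $m_1>M_0$) with $\mu\big(T^{m_j}F_j\cap(X\setminus U_{j-1})\big)>\mu(F_j)\beta_{j-1}-\delta$, which gives $\beta_j<\beta_{j-1}\big(1-\mu(F_j)\big)+\delta$. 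Unrolling this recursion and bounding every cofactor $(1-\mu(F_i))\le 1$ yields the displayed estimate. Since $\prod_{j}\big(1-\mu(F_j)\big)\le\exp\!\big(-\sum_{j}\mu(F_j)\big)$, the union has measure close to $1$ as soon as $\sum_j\mu(F_j)$ is large and $\delta$ small.

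Then I would assemble the global sequence in blocks. Because $\sum_n\mu(E_n)=\infty$, pick $0=N_0<N_1<N_2<\cdots$ with $\sum_{n=N_{j-1}+1}^{N_j}\mu(E_n)\ge j$, and set $B_j=\{N_{j-1}+1,\dots,N_j\}$, $k_j=|B_j|$. Apply the sublemma to the family $(E_n)_{n\in B_j}$ with $\delta_j=2^{-j}/k_j$ and with $M_0$ chosen larger than every index already used in blocks $B_1,\dots,B_{j-1}$; this produces an increasing list of shifts $(m_n)_{n\in B_j}$, and concatenating over $j$ gives a globally increasing sequence $(m_n)_{n\ge 1}$. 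By the sublemma and the block-size condition, $A_j:=\bigcup_{n\in B_j}T^{m_n}E_n$ satisfies $\mu(X\setminus A_j)\le e^{-j}+2^{-j}$, so $\sum_j\mu(X\setminus A_j)<\infty$. By Borel--Cantelli, for a.e. $x$ one has $x\in A_j$ for all but finitely many $j$; since the blocks $B_j$ are pairwise disjoint, such an $x$ lies in $T^{m_n}E_n$ for infinitely many $n$, i.e. $\sum_n 1_{T^{m_n}E_n}(x)=\infty$.

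The main obstacle is the greedy step, and specifically the point that it requires nothing beyond ergodicity: this is exactly where the mean ergodic theorem — giving $\frac1N\sum_m\mu(T^{m}A\cap B)\to\mu(A)\mu(B)$ rather than, say, a mixing hypothesis — is essential, and it is what lets the "ergodic family" be simply the powers of the single map $T$. Everything after that (the product estimate, the calibration of block lengths and error terms, and the Borel--Cantelli conclusion) is routine bookkeeping.
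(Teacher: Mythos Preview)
Your proof is correct and follows essentially the same strategy as the paper's: a greedy covering argument within consecutive blocks of indices (using ergodicity via the mean ergodic theorem to place each $T^{m_n}E_n$ so as to cover a definite fraction of the remaining complement), followed by a Borel--Cantelli argument on the block unions. The only cosmetic differences are that the paper tracks the error multiplicatively (choosing $m$ so that $\mu(T^mE\cap C)\ge\tfrac12\mu(E)\mu(C)$, yielding complements bounded by $\prod(1-\tfrac12\mu(E_i))$) and determines the block endpoints $N_j$ dynamically once the complement drops below $1/j$, whereas you track an additive $\delta$ and pre-fix the block lengths via $\sum_{n\in B_j}\mu(E_n)\ge j$; neither choice affects the substance.
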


\begin{proof}
For convenience, assume $\mu(E_n) > 0$ for all $n$. 
Apply the ergodicity of $T$ to construct an increasing sequence 
$N_j$ and corresponding $m_{j,i}$ such that 
\[
\mu \Big( \bigcup_{i=1}^{N_{j+1}-N_j} T^{m_{j,i}}E_{N_j+i} \Big) 
> 1 - \frac{1}{j} . 
\]
Suppose $N_j, m_{j,i}$ are defined for $1\leq i < k$ for some $k$.  
Using the ergodicity of $T$, we can choose $m_{j,k}$ such that 
\[
\mu \Big( T^{m_{j,k}} E_{N_j+k} \cap 
\big( \bigcup_{i=1}^{k-1} T^{m_{j,i}} E_{N_j+i} \big)^c \Big) \geq 
\frac{1}{2} \mu \big( E_{N_j+k} \big) 
\mu \Big( \big( \bigcup_{i=1}^{k-1} T^{m_{j,i}} E_{N_j+i} \big)^c \Big) . 
\]
Thus, 
\[
\mu \Big( \big( \bigcup_{i=1}^{k} T^{m_{j,i}} E_{N_j+i} \big)^c \Big) \leq 
\prod_{i=1}^{k} \Big( 1 - \frac{1}{2} \mu \big( E_{N_j+i} \big) \Big) . 
\]
Convergence to zero of this infinite product is implied 
by the divergence of the corresponding infinite sum: 
$\sum_{i=1}^{\infty} \frac{1}{2}\mu(E_{N_j+i}) = \infty$.  
See \cite[p.~220]{Knopp54} for a reference.  
Thus, 
\[
\prod_{i=1}^{\infty} \Big( 1 - \frac{1}{2} \mu \big( E_{N_j+i} \big) \Big) 
= 0 . 
\] 
Hence, there exists $N_{j+1}=N_{j}+k$ such that 
\[
\mu \Big( \big( \bigcup_{i=1}^{N_{j+1}-N_j} T^{m_{j,i}} E_{N_j+i} \big)^c \Big) 
< \frac{1}{j} . 
\]
Define 
\[
D_j = \bigcup_{i=1}^{N_{j+1}-N_j} T^{m_{j,i}} E_{N_j+i} . 
\]
Then the limsup of the sets $D_j$, 
$D = \cap_{k=1}^{\infty} \bigcup_{j=k}^{\infty} D_j$ is a set of full measure 
and for $x\in D$, there exist infinitely many $j$ such that 
$x\in D_j$.  This implies there exist infinitely many $j$ and $i$ such 
that $x \in T^{m_{j,i}}E_{N_j+i}$.  
It is clear from the method of the construction that $(m_n)$ can be chosen to be increasing.
\end{proof}

\begin{proof} [Proof of Proposition~\ref{charactergood}]  First, suppose the series condition holds.  Then for any $(v_n)$,
\[\sum\limits_{n=1}^\infty \mu \{|M(v_n,L_n)^Tf| \ge \delta\} \le \sum\limits_{n=1}^\infty \mu \{|M(0,n)^Tf| \ge \delta\} < \infty.\]
Therefore, a.e. $x$ is eventually not in $\{|M(v_n,L_n)^Tf| \ge \delta\}$ and so $ |M(v_n,L_n)^Tf(x)| < \delta$.  Since $\delta > 0$ can be
arbitrarily small, we have $M(v_n,L_n)^Tf(x)$ converges to $0$ a.e.  Hence, for this fixed $T$, $f$ is good for all moving averages.

On the other hand, suppose for some $\delta > 0$, we have
\[\sum\limits_{n=1}^\infty \mu \{|M(0,n)^Tf| \ge \delta\} = \infty.\]
We show that $f$ is not good for some moving average with respect to $T$.
First, by Lemma~\ref{Div} applied to $T^{-1}$, there exists $v_n$ so that for a.e. $x$, we have $T^{v_n}x \in \{|M(0,n)^Tf| \ge \delta\}$ infinitely often.  Hence, using this $(v_n)$, for a.e. $x$ we have  $|M(v_n,n)^T f(x)| \ge \delta$ infinitely often.  However, since $f$ is mean-zero, $(M(v_n,n)^T f)$ converges in $L^1$-norm to zero and so there is a subsequence of these moving averages which converges to $0$ a.e.  So $f$ is not good, in fact it is bad, for this moving average.
\end{proof}

The proof is showing this.

\begin{cor}  Suppose $f\in L^1(\mu)$ is not good for some moving average with respect to an ergodic $T$.  There there is a moving average with respect to $T$ such that the function is actually bad.
\end{cor}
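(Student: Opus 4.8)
The plan is to reduce to the mean-zero case and then observe that the assertion is essentially already contained in the second half of the proof of Proposition~\ref{charactergood}. First I would set $c = \int_X f\, d\mu$ and $f_0 = f - c$. Since $M(v_n,L_n)^T f = M(v_n,L_n)^T f_0 + c$ for every choice of $(v_n,L_n)$, the function $f$ is good for a given moving average exactly when $f_0$ is, and $f$ converges (diverges) a.e.\ along a moving average exactly when $f_0$ does. So $f_0$ is mean-zero and not good for some moving average with respect to $T$, and it suffices to produce a moving average for which $f_0$ is bad; adding back $c$ then gives the same for $f$.

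Next, because $f_0$ is mean-zero and not good for some moving average, the ``only if'' direction of Proposition~\ref{charactergood} forces its series condition to fail: there is a $\delta_0 > 0$ with $\sum_{n=1}^\infty \mu\{|M(0,n)^T f_0| \ge \delta_0\} = \infty$. At this point I would run verbatim the construction in the second paragraph of the proof of Proposition~\ref{charactergood}. Applying Lemma~\ref{Div} to $T^{-1}$ and the sets $E_n = \{|M(0,n)^T f_0| \ge \delta_0\}$ yields an increasing sequence $(v_n)$ in $\mathbb Z^+$ such that for a.e.\ $x$ one has $T^{v_n} x \in E_n$ infinitely often. Since $M(0,n)^T f_0(T^{v_n} x) = M(v_n,n)^T f_0(x)$, this says $|M(v_n,n)^T f_0(x)| \ge \delta_0$ infinitely often, so $\limsup_n |M(v_n,n)^T f_0(x)| \ge \delta_0$ for a.e.\ $x$.

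Finally, to see that $f_0$ genuinely diverges along $(v_n,n)$ rather than merely failing to converge to the correct value, I would invoke the norm convergence recorded in Section~\ref{IntroMovAve}: $f_0$ is mean-zero and $T$ ergodic, so $M(v_n,n)^T f_0 \to 0$ in $L^1$-norm, and hence some subsequence converges to $0$ a.e. Combining this with the previous paragraph, for a.e.\ $x$ the sequence $M(v_n,n)^T f_0(x)$ has liminf $0$ and limsup at least $\delta_0$, so it diverges; thus $f_0$, and therefore $f$, is bad for the moving average $(v_n,n)$. I do not expect a genuine obstacle here — as the remark preceding the statement already indicates, the proof of Proposition~\ref{charactergood} does all the work — and the only new ingredient is the routine subtraction of the mean. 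The single point to keep straight is the $T$ versus $T^{-1}$ bookkeeping when invoking Lemma~\ref{Div} and the resulting index shift turning $M(0,n)^T f_0(T^{v_n}x)$ into $M(v_n,n)^T f_0(x)$, handled exactly as in that earlier proof.
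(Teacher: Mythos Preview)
Your proposal is correct and follows exactly the paper's approach: the paper's entire proof is the sentence ``The proof is showing this,'' meaning the second half of the proof of Proposition~\ref{charactergood} already produces a moving average for which $f$ is bad whenever the series condition fails. Your mean-zero reduction is the one detail the paper elides (Proposition~\ref{charactergood} is stated only for mean-zero $f$), and it is handled correctly.
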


\begin{rem}  Proposition~\ref{charactergood} is showing that if $f$ is not good for some moving averages with respect to $T$, then for some $(v_n)$ and $\delta > 0$, $\limsup\limits_{n\to \infty} |M(v_n,n)^T f| \ge \delta$ a.e.  But we also know that $\liminf\limits_{n\to \infty} |M(v_n,n)^T f| = 0$ a.e.  and this is what shows the failure of convergence a.e.  To deny the function being good, it would have been enough to show just that $\limsup\limits_{n\to \infty} |M(v_n,n)^T f| \ge \delta$ on some set of positive measure.  But because $\lim\limits_{n\to \infty} M(v_n,n)^T (f - f \circ T) = 0$ a.e., actually $\limsup\limits_{n\to \infty} M(v_n,n)^T f$ is $T$-invariant. So it is a constant since $T$ is assumed to be ergodic.  Hence actually $\limsup\limits_{n\to \infty} |M(v_n,n)^T f|$ is a constant a.e.  We know that it can be $\infty$ a.e. Indeed when the Cone Condition fails, this happens for the generic function $f$.  We suspect that for any $C, 0 < C \le \infty$, there is a mean-zero function that is not good such that for some $(v_n)$,  $C = \limsup\limits_{n\to \infty} |M(v_n,n)^T f|$.  Indeed, for $0 < C \le 1$, it is probably the case that there is a characteristic function $f = 1_E$ which is not good such that for some $(v_n)$, $C = \limsup\limits_{n\to \infty} |M(v_n,n)^T (1_E- \mu(E))|$.
\qed \end{rem}

There is also a variation via subsequences for classifying the functions that are good for all moving averages, one that replaces $M(v_n,n)^Tf$ by $M(v_n,L_n)^Tf$ for some fixed $(L_n)$.    Of course, Proposition~\ref{allfgood} also shows that such liberality of considering all $(L_n)$ will capture all functions as good, depending on the choice of $(L_n)$.  Sometimes we can even determine explicitly what $(L_n)$ to use as in the case of $T$-fractional coboundaries.
In any case, here is the result.

\begin{prop}\label{Divgen}  Suppose $(L_n)$ is a fixed increasing sequence in $\mathbb Z^+$.  For an ergodic map $T$, a mean-zero function $f\in L^1(\mu)$ is good for all moving averages $(M(v_n,L_n)^T)$ if and only if for all $\delta > 0$, we have
\[\sum\limits_{n=1}^\infty \mu \{|M(0,L_n)^T f| \ge \delta \} < \infty.\]
Also, if $f$ is not good for some such moving average, then there is another moving average $M(v_n,L_n)^Tf$ which a.e. fails to converge, so $f$ is bad for some moving averaging with the lengths of the averages in $(L_n)$.
\end{prop}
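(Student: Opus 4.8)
The plan is to reprise, almost verbatim, the proof of Proposition~\ref{charactergood}, substituting the fixed length $L_n$ for $n$ throughout; the only ingredients are Lemma~\ref{Div} and the $L^1$-norm convergence of moving averages recorded in Section~\ref{IntroMovAve}, and the latter is available since an increasing integer sequence $(L_n)$ necessarily satisfies $L_n\to\infty$.

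For the ``if'' direction I would argue exactly as in Proposition~\ref{charactergood}: setting $g_n=M(0,L_n)^T f$ one has $M(v_n,L_n)^T f=g_n\circ T^{v_n}$, so because $T$ is measure preserving, $\mu\{|M(v_n,L_n)^T f|\ge\delta\}=\mu\{|M(0,L_n)^T f|\ge\delta\}$ for every $\delta>0$ and every offset sequence $(v_n)$ in $\Z$. Summing over $n$ and applying Borel--Cantelli shows that for a.e.\ $x$ one has $|M(v_n,L_n)^T f(x)|<\delta$ for all large $n$, and letting $\delta$ decrease to $0$ along a sequence gives a.e.\ convergence of $M(v_n,L_n)^T f$ to $0$. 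Hence $f$ is good for every moving average with lengths $(L_n)$.

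For the ``only if'' direction, assuming $\sum_n\mu(E_n)=\infty$ with $E_n=\{|M(0,L_n)^T f|\ge\delta\}$ for some fixed $\delta>0$, I would apply Lemma~\ref{Div} to the (ergodic) map $T^{-1}$ to obtain an increasing sequence $(v_n)$ in $\Z^+$ with $\sum_n\un_{T^{-v_n}E_n}=\infty$ a.e. Since $x\in T^{-v_n}E_n$ is equivalent to $|M(v_n,L_n)^T f(x)|\ge\delta$ (by the identity $M(v_n,L_n)^T f=g_n\circ T^{v_n}$ above), this yields $\limsup_n|M(v_n,L_n)^T f|\ge\delta$ a.e. Then, using that $f$ is mean-zero and $L_n\to\infty$, so that $M(v_n,L_n)^T f\to 0$ in $L^1$-norm, some subsequence converges to $0$ a.e., whence $\liminf_n|M(v_n,L_n)^T f|=0$ a.e.; combining, the moving averages $M(v_n,L_n)^T f$ diverge a.e., i.e.\ $f$ is bad (not merely not good) for $M(v_n,L_n)^T$. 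The final sentence of the statement then drops out: if $f$ is not good for some moving average with lengths $(L_n)$, the equivalence just proved forces the series to diverge for some $\delta>0$, and the construction above exhibits an offset sequence $(v_n)$ for which $f$ is bad.

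I do not expect any real obstacle here; the one place to be slightly careful is confirming that the $L^1$-norm convergence invoked in the ``only if'' direction genuinely applies to the fixed sequence $(L_n)$, and this is immediate because $(L_n)$ is increasing and hence tends to infinity, so the discussion of Section~\ref{IntroMovAve} is in force.
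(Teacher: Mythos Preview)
Your proposal is correct and is exactly the approach the paper intends: Proposition~\ref{Divgen} is presented without a separate proof precisely because it is the evident variation of Proposition~\ref{charactergood} obtained by replacing $n$ with $L_n$, and your write-up faithfully carries out that substitution using Lemma~\ref{Div} (applied to $T^{-1}$) together with the $L^1$-norm convergence from Section~\ref{IntroMovAve}.
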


\section {\bf The IID case of moving averages}\label{IID}

The series characterization of good functions is a familiar one in probability theory.    To keep our notation consistent, we take an IID sequence $(X_k:k \ge 0)$ and denote by $M(v_n,L_n)^TX_0$ the moving average $\frac 1{L_n}\sum\limits_{k=v_n+1}^{v_n+L_n} X_k$.  In this section, we present the famous result 
of Hsu-Robbins ~\cite{HR}, as well as the converse proved by Erd\H{o}s~\cite{Erdos}.  We also give extensions to this result as well as some limitations. 

\subsection{Hsu-Robbins-Erd\H{o}s result}
\begin{prop} \label{HRE} Suppose $(X_k:k \ge 0)$ is an IID sequence. Then
\[\sum\limits_{n=1}^\infty \mu \{|M(0,n)X_0| \ge \delta \} < \infty\]
for all $\delta > 0$ if and only if $X_0$ is mean-zero and $X_0 \in L^2$.
\end{prop}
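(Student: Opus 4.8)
The plan is to prove both directions of the Hsu–Robbins–Erdős equivalence separately, treating the sufficiency direction (mean-zero and $L^2$ implies complete convergence) as the Hsu–Robbins half and the necessity direction (complete convergence implies mean-zero and $L^2$) as the Erdős converse. For the sufficiency direction, I would first reduce to the symmetric case or at least centering: assuming $E X_0 = 0$ and $E X_0^2 < \infty$, set $S_n = \sum_{k=1}^n X_k$, so $M(0,n)X_0 = S_n/n$ and we must show $\sum_n \mu\{|S_n| \ge \delta n\} < \infty$ for every $\delta > 0$. The classical route is a fourth-moment-style truncation argument: truncate $X_k$ at level $n$, writing $X_k = Y_k^{(n)} + Z_k^{(n)}$ where $Y_k^{(n)} = X_k \mathbbm 1_{|X_k| \le n}$; the tail-sum contribution $\sum_n n\,\mu\{|X_0| > n\} < \infty$ follows from $E X_0^2 < \infty$ (this is where the second moment is exactly used, via $\sum_n n \mu\{|X_0|>n\} \asymp E X_0^2$), and for the truncated part one applies a Chebyshev/Markov bound at a suitable moment — a direct fourth-moment bound on $S_n^{(n)}$ after re-centering, using independence to kill cross terms, gives $\mu\{|S_n^{(n)} - E S_n^{(n)}| \ge \delta n/2\} = O(n^{-2})$, which is summable. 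One must separately check $E S_n^{(n)} = o(n)$, which follows from $E X_0 = 0$ and dominated convergence. Assembling these pieces yields the series convergence.

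For the necessity (Erdős) direction, assume $\sum_n \mu\{|S_n/n| \ge \delta\} < \infty$ for all $\delta > 0$. First, the single-term condition $\mu\{|S_n| \ge \delta n\} \to 0$ already forces $S_n/n \to 0$ in probability, hence by Kolmogorov's strong law (or its converse) $E|X_0| < \infty$ and $E X_0 = 0$ — actually one gets $E X_0$ exists and equals $0$ directly from $S_n/n \to 0$ a.e.\ along a subsequence plus the strong law's sharpness. The heart is extracting $E X_0^2 < \infty$ from the \emph{summability}. The standard argument uses the Borel–Cantelli converse: since the events $A_n = \{|S_n| \ge \delta n\}$ have summable probabilities, consider instead a blocked/independent version — along $n = 2^j$ the increments $S_{2^{j+1}} - S_{2^j}$ are independent, and a symmetrization plus Lévy-type maximal inequality converts control of $\mu\{|S_{2^j}| \ge \delta 2^j\}$ into control of $\mu\{\max_{k \le 2^j}|X_k| \ge \delta' 2^j\}$, i.e.\ into $\sum_j \mu\{|X_0| \ge \delta' 2^j\} < \infty$, which is equivalent to $E X_0^2 < \infty$. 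Care is needed to first symmetrize (replace $X_k$ by $X_k - X_k'$) so that the maximal inequality applies cleanly, and then to undo the symmetrization using the already-established finiteness of the mean.

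**The main obstacle** I expect is the necessity direction, specifically the passage from summability of $\mu\{|S_n|\ge \delta n\}$ to summability of $\mu\{|X_0| \ge c n\}$; the single-term estimate is far too weak (it only gives convergence in probability and hence no moment beyond $L^1$), so one genuinely needs a maximal-inequality or blocking argument to "localize" the large deviation of the sum onto a single large summand. The symmetrization bookkeeping — ensuring that passing to $X_k - X_k'$ does not lose the series hypothesis (it does not, since $|S_n^{\mathrm{sym}}| \le |S_n| + |S_n'|$ and one uses the triangle inequality at level $\delta/2$) and that passing back recovers the moment bound for $X_0$ itself — is the fiddly part. Since the excerpt's Proposition~\ref{charactergood} already supplies the ergodic-theoretic reformulation, here I would simply cite Hsu–Robbins~\cite{HR} for sufficiency and Erdős~\cite{Erdos} for necessity and present the truncation sketch for sufficiency in full, indicating the symmetrization-plus-maximal-inequality strategy for the converse without grinding through the constants.
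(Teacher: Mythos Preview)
The paper does not prove this proposition at all; it simply attributes the sufficiency to Hsu--Robbins~\cite{HR} and the necessity to Erd\H{o}s~\cite{Erdos} and moves on, so your plan to cite and then sketch already exceeds what the paper does.

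Two steps in your sketch would not go through as written. In the sufficiency direction, truncation at level $n$ followed by a fourth-moment Chebyshev bound does \emph{not} give $O(n^{-2})$ termwise: the fourth moment of the centered truncated sum is $n\,E(Y^{(n)})^4 + O(n^2\sigma^4)$, and the crude estimate $E(Y^{(n)})^4 \le n^2\,EX_0^2$ yields only a bound of order $1/n$. The approach is salvageable, but one must sum first and then swap the order:
\[
\sum_n n^{-3}\,E\bigl(X_0^4;\,|X_0|\le n\bigr) \;=\; E\Bigl[X_0^4 \sum_{n\ge |X_0|} n^{-3}\Bigr] \;\le\; C\,EX_0^2 \;<\;\infty.
\]
In the necessity direction, restricting to the lacunary subsequence $n = 2^j$ throws away exactly the information you need: the condition $\sum_j \mu\{|X_0| \ge c\,2^j\} < \infty$ is equivalent only to $E\log_+|X_0| < \infty$, and even after reinstating the factor $2^j$ coming from the maximum you recover only $E|X_0| < \infty$, not a second moment. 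Erd\H{o}s's argument retains all $n$: after symmetrizing, on the event that exactly one of $X_1,\dots,X_n$ exceeds $2\delta n$ in absolute value while the sum of the others is at most $\delta n$ (the latter having probability bounded below by the weak law once $L^1$ is in hand), one has $|S_n|\ge \delta n$; hence $\mu\{|S_n|\ge \delta n\} \gtrsim n\,\mu\{|X_0|>2\delta n\}$ for large $n$, and summing over \emph{all} $n$ gives $\sum_n n\,\mu\{|X_0|>2\delta n\} < \infty$, which is the second-moment condition.
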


\noindent Indeed, with $X_0$ being mean-zero, one has $X_0 \in L^2$ just if
\[\sum\limits_{n=1}^\infty \mu \{|M(0,n)X_0| > 1\} < \infty.\]

We can use Proposition~\ref{HRE} to prove the following corollary.

\begin{cor}\label{GoodIID} Any IID sequence $(X_k)$ with $X_0 \in L^2$ has the property that for all $(v_n)$ the moving averages $M(v_n,n)X_0$ converge a.e. to the mean of $X_0$.
\end{cor}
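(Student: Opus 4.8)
The plan is to reduce to the mean-zero case and then apply the Hsu--Robbins--Erd\H{o}s criterion (Proposition~\ref{HRE}) together with a Borel--Cantelli argument, exactly as in the first half of the proof of Proposition~\ref{charactergood}. First I would set $c = \int X_0\, d\mu$ and replace $X_k$ by $Y_k = X_k - c$; then $(Y_k)$ is again an IID sequence, $Y_0 \in L^2$, $Y_0$ is mean-zero, and $M(v_n,n)X_0 = M(v_n,n)Y_0 + c$ for every $n$ and every choice of $(v_n)$. So it suffices to show $M(v_n,n)Y_0 \to 0$ a.e.

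Next, apply Proposition~\ref{HRE} to $(Y_k)$: since $Y_0$ is mean-zero and lies in $L^2$, we obtain $\sum_{n=1}^\infty \mu\{|M(0,n)Y_0| \ge \delta\} < \infty$ for every $\delta > 0$. The key step is then to transfer this series bound from the initial window to an arbitrary window $(v_n)$. Because $(Y_k)$ is IID, it is stationary under the shift, so the random variable $M(v_n,n)Y_0 = \frac{1}{n}\sum_{k=v_n+1}^{v_n+n} Y_k$ has the same distribution as $M(0,n)Y_0$; hence $\mu\{|M(v_n,n)Y_0| \ge \delta\} = \mu\{|M(0,n)Y_0| \ge \delta\}$ for each $n$, and summing gives $\sum_{n=1}^\infty \mu\{|M(v_n,n)Y_0| \ge \delta\} < \infty$.

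Finally, by the Borel--Cantelli lemma, for each fixed $\delta > 0$ almost every $x$ lies in only finitely many of the sets $\{|M(v_n,n)Y_0| \ge \delta\}$, i.e.\ $\limsup_{n\to\infty} |M(v_n,n)Y_0(x)| \le \delta$ for a.e.\ $x$. Letting $\delta \to 0$ along a countable sequence yields $M(v_n,n)Y_0 \to 0$ a.e., and therefore $M(v_n,n)X_0 \to c = \int X_0\,d\mu$ a.e., which is the assertion.

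I do not expect a genuine obstacle here; the one point worth a remark is that one should not cite Proposition~\ref{charactergood} verbatim, since it is phrased for the powers of a single ergodic map rather than for an abstract IID process. Of course an IID sequence can always be realized as $(f\circ T^k)$ for a Bernoulli shift $T$, in which case the two statements literally coincide, but running the short Borel--Cantelli argument directly, as above, avoids having to make that identification and keeps the proof self-contained.
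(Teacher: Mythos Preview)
Your proposal is correct and is exactly the argument the paper has in mind: the paper simply says the corollary follows from Proposition~\ref{HRE}, and your reduction to mean zero, use of stationarity to equate $\mu\{|M(v_n,n)Y_0|\ge\delta\}$ with $\mu\{|M(0,n)Y_0|\ge\delta\}$, and Borel--Cantelli conclusion are precisely the steps implicit in that citation (and mirror the sufficiency direction of Proposition~\ref{charactergood}). Your closing remark about avoiding a direct appeal to Proposition~\ref{charactergood} is well taken and makes the write-up cleaner.
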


\subsection{Extensions and limitations to Hsu-Robbins-Erd\H{o}s} 
We begin by proving the following restriction on Corollary~\ref{GoodIID}. 

\begin{prop} For any IID sequence $(X_k)$ with $X_0 \in L^1\backslash L^2$ and $X_0$ mean-zero, if $v_n = n(n+1)/2$, then a.e. the moving averages $M(v_n,n)X_0$ do not converge.
\end{prop}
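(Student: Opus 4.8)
The plan is to show that with the "triangular" choice $v_n = n(n+1)/2$ the consecutive blocks $\{v_n+1,\dots,v_n+n\}$ are disjoint and exhaust $\mathbb{Z}^+$: indeed $v_{n+1} = v_n + (n+1)$, so block $n$ occupies positions $v_n+1,\dots,v_n+n$ and block $n+1$ starts at $v_n+n+1$. Write $S_n = \sum_{k=v_n+1}^{v_n+n} X_k$, so $M(v_n,n)X_0 = S_n/n$, and the $S_n$ are \emph{independent} random variables (they depend on disjoint blocks of the i.i.d. sequence), with $S_n$ having the distribution of a sum of $n$ i.i.d.\ copies of $X_0$. The goal is to prove $\limsup_{n\to\infty}|S_n/n| > 0$ a.e. (in fact $=\infty$, but positivity suffices to kill convergence, since the $\liminf$ is $0$ a.e.\ by norm convergence as in Proposition~\ref{charactergood}).

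The engine is a Borel--Cantelli argument using independence. First I would invoke the Erd\H{o}s converse direction of Proposition~\ref{HRE}: since $X_0$ is mean-zero but $X_0 \notin L^2$, for \emph{every} $\delta > 0$ we have $\sum_{n=1}^\infty \mu\{|M(0,n)X_0| \ge \delta\} = \infty$. Because $S_n$ has exactly the law of $\sum_{k=1}^n X_k$, this says $\sum_{n=1}^\infty \mu\{|S_n/n| \ge \delta\} = \infty$. Now the events $A_n^\delta = \{|S_n/n| \ge \delta\}$ are independent (functions of disjoint blocks), so the second Borel--Cantelli lemma gives $\mu(\limsup_n A_n^\delta) = 1$ for each $\delta$. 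Intersecting over $\delta = 1/j$, $j\in\mathbb{N}$, yields a full-measure set on which $\limsup_n |S_n/n| = \infty$, hence on which $M(v_n,n)X_0 = S_n/n$ does not converge. This already proves the proposition; one can optionally note convergence fails because $\liminf_n |M(v_n,n)X_0| = 0$ a.e.\ (the averages go to $0$ in $L^1$, so a subsequence tends to $0$ a.e.).

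The only real subtlety — and the step I expect to take a line of care — is the independence of the events $A_n^\delta$: this is precisely where the arithmetic identity $v_n = n(n+1)/2$ is used, guaranteeing the blocks $[v_n+1, v_n+n]$ are pairwise disjoint. With any smaller $v_n$ the blocks would overlap and independence would fail; with the triangular numbers the $n$-th block has length exactly $n$ and abuts the next, so disjointness is automatic. Everything else is routine: the Erd\H{o}s half of Hsu--Robbins--Erd\H{o}s is quoted from Proposition~\ref{HRE}, and the second Borel--Cantelli lemma applies verbatim once independence is in hand.
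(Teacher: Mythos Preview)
Your proof is correct and follows essentially the same route as the paper: disjointness of the triangular blocks gives independence of the $M(v_n,n)X_0$, Erd\H{o}s' converse gives $\sum_n \mu\{|M(v_n,n)X_0|\ge\delta\}=\infty$, the second Borel--Cantelli lemma then forces $\limsup_n|M(v_n,n)X_0|\ge\delta$ a.e., and $L^1$-norm convergence to $0$ supplies a subsequence with a.e.\ limit $0$. The only cosmetic difference is that the paper works with a single $\delta=1$, whereas you intersect over $\delta=1/j$ to upgrade to $\limsup=\infty$; either version suffices.
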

\begin{proof} Erd\H{o}s~\cite{Erdos} shows that
$\sum\limits_{n=1}^\infty \mu \{|M(0,n)X_0| > 1\} = \infty$.  It is easy to see then that the moving averages $M(v_n,n)X_0=  \frac 1n\sum\limits_{k=v_n+1}^{v_n+n} X_k$ are independent because $v_n =n(n+1)/2 = \sum\limits_{k=1}^n k$.  But also
\[\sum\limits_{n=1}^\infty \mu \{|M(v_n,n)X_0| > 1\} = \sum\limits_{n=1}^\infty \mu \{|M(0,n)X_0| > 1\} = \infty.\]
So the Borel-Cantelli Lemma shows that $\sum\limits_{n=1}^\infty 1_{\{|M(v_n,n)X_0| > 1\}} = \infty$ a.e.   However, in $L^1$-norm, $M(v_n,n)X_0 \to 0$ as $n\to \infty$, and so some subsequence converges a.e. to $0$.  This proves that a.e. the moving averages $M(v_n,n)X_0$ fail to converge.
\end{proof}

It is interesting that the problem with these moving averages is really the amount of data being collected. Here is a simple observation.

\begin{prop} \label{Lp} For any mean-zero IID sequence $(X_k)$ with $X_0 \in L^p, 1 < p < 2$, whenever  $D(p -1) > 1$, then for all $(v_n)$, the moving averages
have $\lim\limits_{n\to \infty} M(v_n,\lfloor n^D\rfloor)X_0 = 0$ a.e.
\end{prop}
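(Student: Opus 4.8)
The plan is to reduce the statement to a complete--convergence estimate and then apply the first Borel--Cantelli lemma, just as in the easy direction of Proposition~\ref{charactergood}. Write $S_m=\sum_{k=1}^m X_k$ and $L_n=\lfloor n^D\rfloor$. Because $(X_k)$ is i.i.d., for \emph{every} sequence $(v_n)$ the random variable $M(v_n,L_n)X_0=\frac1{L_n}\sum_{k=v_n+1}^{v_n+L_n}X_k$ has the same distribution as $\frac1{L_n}S_{L_n}$, so $\mu\{|M(v_n,L_n)X_0|\ge\delta\}=\mu\{|S_{L_n}|\ge \delta L_n\}$ for all $\delta>0$. Hence it suffices to show
\[
\sum_{n=1}^\infty \mu\{|S_{L_n}|\ge \delta L_n\}<\infty \qquad\text{for every }\delta>0 .
\]
Given this, for each fixed $\delta$ almost every $x$ lies in $\{|M(v_n,L_n)X_0|\ge\delta\}$ for only finitely many $n$; intersecting the resulting full--measure sets over $\delta=1/j$, $j\in\mathbb Z^+$, gives $\limsup_n|M(v_n,L_n)X_0|=0$ a.e., which is the assertion (and it equals the mean $0$ of $X_0$).

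For the estimate I would bound the $p$--th moment of $S_m$ using the fact that for independent, mean--zero summands and $1\le p\le 2$ one has $E|S_m|^p\le 2\sum_{k=1}^m E|X_k|^p$ (the von Bahr--Esseen inequality); with our i.i.d. hypothesis this is $E|S_m|^p\le 2m\|X_0\|_p^p$. Markov's inequality then yields
\[
\mu\{|S_m|\ge \delta m\}\le \frac{E|S_m|^p}{(\delta m)^p}\le \frac{2\|X_0\|_p^p}{\delta^p}\,m^{1-p}.
\]
This is the only step that really uses $p\le 2$, and it is where a naive union bound over the $m$ summands would be too weak.

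Finally, substitute $m=L_n=\lfloor n^D\rfloor$. Since $n^D\ge1$ we have $L_n\ge n^D/2$, so $L_n^{\,1-p}\le 2^{\,p-1}n^{-D(p-1)}$, whence
\[
\sum_{n=1}^\infty \mu\{|S_{L_n}|\ge \delta L_n\}\le \frac{2^{p}\|X_0\|_p^p}{\delta^p}\sum_{n=1}^\infty n^{-D(p-1)}<\infty ,
\]
the series converging exactly because $D(p-1)>1$. Together with the reduction above this proves $\lim_{n\to\infty}M(v_n,\lfloor n^D\rfloor)X_0=0$ a.e. for every $(v_n)$.

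The main obstacle is minor: it is simply a matter of having the sharp moment inequality for sums of independent mean--zero variables in the range $1\le p\le 2$, which produces the decay $m^{1-p}$ rather than the slower rate one would get from truncation plus a union bound. An alternative would be to invoke the Baum--Katz theorem (with scaling exponent $1$) to get $\sum_m m^{p-2}\mu\{|S_m|\ge\delta m\}<\infty$ and then re--sum over the sparse indices $L_n\asymp n^D$, comparing $m^{p-2}$ with the density $m^{1/D-1}/D$ of those indices; this again works precisely when $D(p-1)>1$, but the von Bahr--Esseen route is more elementary and self--contained.
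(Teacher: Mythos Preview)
Your argument is correct and is essentially the paper's own proof: both use the moment bound $\|S_m\|_p\le C\,m^{1/p}\|X_0\|_p$ for independent mean-zero summands with $1<p<2$ (the paper cites Rosenthal, Lemma~2~b), you cite von~Bahr--Esseen, but it is the same inequality), apply Markov's inequality to get $\mu\{|M(0,L_n)X_0|\ge\delta\}\le C_\delta\,L_n^{1-p}$, and conclude via Borel--Cantelli since $\sum_n n^{-D(p-1)}<\infty$. Your explicit reduction from $M(v_n,L_n)$ to $M(0,L_n)$ via equality in distribution is just the i.i.d.\ version of the measure-preservation step the paper uses implicitly.
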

\begin{proof}  We use the $L^p$-norm estimate $\|\sum\limits_{k=1}^n X_k\|_p \le 2n^{1/p}\|X_0\|_p$ in Rosenthal~\cite{Rosenthal}, Lemma 2 b).  This gives
$\sum\limits_{n=1}^\infty  \|M(v_n,\lfloor n^D\rfloor)X_0\|_p^p < \infty$ since $D(p-1) > 1$.  This is all that is needed for every moving average with $\lfloor n^D\rfloor$ terms to converge a.e.
\end{proof}

The results of Asmussen and Kurtz~\cite{AK}, see also Gut~\cite{Gut}, address the same issue with more precision, in particular for $L^p, 1 \le p < \infty$.  For example, we have this better result than Proposition~\ref{Lp}.

\begin{prop} \label{Lp2} For any mean-zero IID sequence $(X_k)$ with $X_0 \in L^p, 1 < p < 2$, whenever $D(p -1) = 1$, then for all $\delta > 0$,
\[\sum\limits_{n=1}^\infty \mu \{|M(0,\lfloor n^D\rfloor)X_0| \ge \delta \} < \infty.\]
Hence, for all $(v_n)$, the moving averages
have $\lim\limits_{n\to \infty} M(v_n,\lfloor n^D\rfloor)X_0 = 0$ a.e.
\end{prop}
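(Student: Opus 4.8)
The plan is to deduce the displayed complete-convergence bound from the $L^p$ refinement of Hsu--Robbins--Erd\H{o}s recorded in \cite{AK} and \cite{Gut} (a Baum--Katz-type estimate): for a mean-zero IID sequence with $X_0\in L^p$, $1<p<2$, and with $S_m=X_1+\cdots+X_m$,
\[
\sum_{m=1}^\infty m^{p-2}\,\mu\{|S_m|\ge\varepsilon m\}<\infty\qquad\text{for every }\varepsilon>0 .
\]
Granting the displayed inequality of the proposition, the pointwise statement is automatic and needs no new idea: since the $X_k$ are IID, $\mu\{|M(v_n,L_n)X_0|\ge\delta\}=\mu\{|M(0,L_n)X_0|\ge\delta\}$ for every $(v_n)$, so the first Borel--Cantelli lemma gives that for each $\delta>0$ a.e.\ $x$ lies in only finitely many of the sets $\{|M(v_n,\lfloor n^D\rfloor)X_0|\ge\delta\}$; intersecting over a sequence $\delta\downarrow 0$ yields $M(v_n,\lfloor n^D\rfloor)X_0\to 0$ a.e., exactly as in the proof of Proposition~\ref{charactergood}.

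So everything reduces to proving $\sum_{n=1}^\infty\mu\{|S_{m_n}|\ge\delta m_n\}<\infty$ for $m_n=\lfloor n^D\rfloor$ and every $\delta>0$, since $\{|M(0,\lfloor n^D\rfloor)X_0|\ge\delta\}=\{|S_{m_n}|\ge\delta m_n\}$. The identity $D(p-1)=1$ is used twice. First, $D=1/(p-1)>1$, so $(m_n)$ is eventually strictly increasing, with gaps $m_{n+1}-m_n\asymp n^{D-1}\asymp m_n^{\,2-p}$ (using $1/D=p-1$) and with $m_{n+1}/m_n\to1$. Second, $D$ is precisely the exponent that aligns the density $\asymp m^{p-2}$ of the thin set $\{m_n:n\in\mathbb{Z}^+\}$ with the Baum--Katz weight $m^{p-2}$. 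I would fix $n_0$ with $(m_n)_{n\ge n_0}$ strictly increasing and set $B_n=[m_n,m_{n+1})\cap\mathbb{Z}$, so that $\{B_n\}_{n\ge n_0}$ partitions $\{m\in\mathbb{Z}:m\ge m_{n_0}\}$ and $|B_n|\asymp m_n^{\,2-p}$.

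The core step is to bound the single number $\mu\{|S_{m_n}|\ge\delta m_n\}$ by an average of $\mu\{|S_m|\ge(\delta/4)m\}$ over $m\in B_n$. For $m\in B_n$ write $S_m=S_{m_n}+(S_m-S_{m_n})$; by Rosenthal's inequality (Lemma~2(b) of \cite{Rosenthal}) applied to the block, $\|S_m-S_{m_n}\|_p^p\le 2^p|B_n|\,\|X_0\|_p^p\asymp m_n^{\,2-p}=o(m_n^{\,p})$ because $p>1$. Hence for $n\ge n_0$ large,
\[
\{|S_{m_n}|\ge\delta m_n\}\ \subseteq\ \{|S_m|\ge(\delta/4)m\}\ \cup\ \{|S_m-S_{m_n}|\ge(\delta/4)m_n\},
\]
and Markov's inequality bounds the probability of the last event by $C m_n^{\,2-2p}$. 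Averaging over $m\in B_n$ and summing,
\[
\sum_{n\ge n_0}\mu\{|S_{m_n}|\ge\delta m_n\}\ \le\ C\sum_{n\ge n_0}\frac1{|B_n|}\sum_{m\in B_n}\mu\{|S_m|\ge(\delta/4)m\}\ +\ C\sum_{n\ge n_0}m_n^{\,2-2p} .
\]
The second sum is $\asymp\sum_n n^{D(2-2p)}=\sum_n n^{-2}<\infty$ since $D(2-2p)=-2$; for the first, $m_{n+1}/m_n\to1$ and $p-2<0$ give $1/|B_n|\asymp m_n^{\,p-2}\asymp m^{\,p-2}$ uniformly on $B_n$, so that sum is dominated by $C\sum_{m\ge m_{n_0}}m^{\,p-2}\mu\{|S_m|\ge(\delta/4)m\}<\infty$ by the Baum--Katz estimate. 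This closes the argument.

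The main obstacle is this transfer step: the cited estimate is a statement about a \emph{weighted} sum over all $m$, whereas we need an \emph{unweighted} sum over the sparse subsequence $\lfloor n^D\rfloor$, and $m\mapsto\mu\{|S_m|\ge\delta m\}$ is not monotone, so one cannot compare term by term. The blocking argument handles it; the only quantitative inputs are the $L^p$-moment bound on the block increments $S_m-S_{m_n}$ and the numerical coincidence $D(2-2p)=-2$ that makes the error series converge. If one instead cites a ready-made ``moving-block''/subsequence form of the Baum--Katz theorem from \cite{AK} or \cite{Gut}, this whole step collapses to a reference and only the bookkeeping of the first paragraph remains.
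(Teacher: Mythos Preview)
Your argument is correct. The blocking transfer from the full Baum--Katz weighted sum $\sum_m m^{p-2}\mu\{|S_m|\ge\varepsilon m\}<\infty$ to the unweighted subsequence sum $\sum_n\mu\{|S_{m_n}|\ge\delta m_n\}<\infty$ is cleanly executed: the gap estimate $|B_n|\asymp m_n^{2-p}$, the Rosenthal bound on the block increments, and the numerical identity $D(2-2p)=-2$ all check out, and the averaging trick correctly circumvents the non-monotonicity of $m\mapsto\mu\{|S_m|\ge\delta m\}$.

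That said, the paper gives no proof of this proposition at all: it simply attributes the statement to Asmussen--Kurtz~\cite{AK} and Gut~\cite{Gut}, whose results are already stated in subsequence form (Gut's theorem is precisely that $X_0\in L^p$ is equivalent to complete convergence along $\lfloor n^D\rfloor$ with $D(p-1)=1$). So your final paragraph has it exactly right: the paper's ``proof'' is the ready-made citation you mention, and your blocking argument is a genuine reconstruction of what those references contain. What your route buys is self-containment from the classical Baum--Katz inequality; what the paper's route buys is brevity.
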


In addition, we have this result for $L^1$.

\begin{prop}  For any mean-zero IID sequence $(X_k)$ with $X_0 \in L^1$, and any $\delta > 0$,
\[\sum\limits_{n=1}^\infty \mu \{|M(0,2^n)X_0| \ge \delta \} < \infty.\]
Hence, for any mean-zero IID sequence $(X_k)$ with $X_0 \in L^1$, one knows that for all $(v_n)$, the moving averages
have $\lim\limits_{n\to \infty} M(v_n,2^n)X_0 = 0$ a.e.
\end{prop}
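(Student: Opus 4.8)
The plan is to prove the series bound $\sum_{n=1}^\infty \mu\{|M(0,2^n)X_0|\ge\delta\}<\infty$ by a truncation argument combined with a second-moment (Chebyshev) estimate, and then to read off the pointwise statement by stationarity and Borel-Cantelli. The key structural point is that the number of summands, $2^n$, and a truncation level chosen to be exactly $2^n$ are geometrically \emph{matched}, which is what allows $X_0\in L^1$ to suffice; this matching fails for the ordinary averages $M(0,n)X_0$, where Hsu-Robbins genuinely requires $X_0\in L^2$.

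Fix $\delta>0$ and write $S_m=\sum_{k=1}^m X_k$, so $M(0,2^n)X_0=S_{2^n}/2^n$. Since $X_0$ is mean-zero and integrable, $\mathbb E[X_0\,1_{\{|X_0|\le 2^n\}}]=-\mathbb E[X_0\,1_{\{|X_0|> 2^n\}}]\to 0$, so I first fix $N_0$ with $|\mathbb E[X_0\,1_{\{|X_0|\le 2^n\}}]|<\delta/4$ for all $n\ge N_0$. For $n\ge N_0$ I split the event $\{|S_{2^n}/2^n|\ge\delta\}$ into the event $A_n$ that $|X_k|>2^n$ for some $1\le k\le 2^n$ and its complement. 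On $A_n^c$ one has $S_{2^n}=\sum_{k=1}^{2^n}X_k\,1_{\{|X_k|\le 2^n\}}$, and subtracting the common mean (of modulus $<\delta/4$) shows that the centered truncated average $\tfrac1{2^n}\sum_{k=1}^{2^n}\big(X_k\,1_{\{|X_k|\le 2^n\}}-\mathbb E[X_k\,1_{\{|X_k|\le 2^n\}}]\big)$ has modulus $\ge 3\delta/4$.

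The first piece is handled by $\mu(A_n)\le 2^n\mu\{|X_0|>2^n\}$ together with the pointwise geometric-series bound $\sum_{n}2^n\,1_{\{|X_0|>2^n\}}\le 2|X_0|$, giving $\sum_n 2^n\mu\{|X_0|>2^n\}\le 2\,\mathbb E|X_0|<\infty$. For the second piece, Chebyshev's inequality applied to the centered truncated sum bounds its probability by $\tfrac{C}{\delta^2}\,2^{-n}\,\var\!\big(X_0\,1_{\{|X_0|\le 2^n\}}\big)\le \tfrac{C}{\delta^2}\,2^{-n}\,\mathbb E\big[X_0^2\,1_{\{|X_0|\le 2^n\}}\big]$. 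Decomposing $\mathbb E[X_0^2\,1_{\{|X_0|\le 2^n\}}]=\sum_{j\le n}\mathbb E[X_0^2\,1_{\{2^{j-1}<|X_0|\le 2^j\}}]$ over dyadic shells and interchanging the order of summation, one obtains $\sum_{n\ge N_0}2^{-n}\mathbb E[X_0^2\,1_{\{|X_0|\le 2^n\}}]\le C'\sum_{j\ge 1}2^{j}\,\mu\{2^{j-1}<|X_0|\le 2^j\}+C''\le C'''\,\mathbb E|X_0|<\infty$. Adding the finitely many terms with $n<N_0$ completes the series bound.

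For the ``hence'' part, for any $(v_n)$ the variable $M(v_n,2^n)X_0=\tfrac1{2^n}\sum_{k=v_n+1}^{v_n+2^n}X_k$ has the same distribution as $M(0,2^n)X_0$ by stationarity, so $\sum_n\mu\{|M(v_n,2^n)X_0|\ge\delta\}<\infty$ for every $\delta>0$; the first Borel-Cantelli lemma (no independence needed) then gives $|M(v_n,2^n)X_0|<\delta$ for all large $n$ a.e., and taking $\delta=1/j$ and intersecting over $j$ yields $M(v_n,2^n)X_0\to 0$ a.e., exactly as in the first half of the proof of Proposition~\ref{charactergood}. The main obstacle is the dyadic double-sum estimate: one must verify that the weight $2^{-n}$ beats the at-most-linear-in-$2^n$ growth of the truncated second moment $\mathbb E[X_0^2\,1_{\{|X_0|\le 2^n\}}]$, and this is precisely the place where the geometric spacing of the lengths $2^n$ is essential and where the corresponding estimate for $M(0,n)X_0$ breaks down.
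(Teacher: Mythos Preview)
Your proof is correct. The truncation-at-level-$2^n$ split, the union bound $\sum_n 2^n\mu\{|X_0|>2^n\}\lesssim \mathbb E|X_0|$, and the Chebyshev/dyadic-shell estimate $\sum_n 2^{-n}\mathbb E[X_0^2\,1_{\{|X_0|\le 2^n\}}]\lesssim \mathbb E|X_0|$ are all sound, and the deduction of the a.e.\ statement via stationarity and Borel--Cantelli is exactly the mechanism of Proposition~\ref{charactergood}.

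The paper, however, does not actually prove this proposition: it is stated as a direct consequence of the general complete-convergence results of Asmussen--Kurtz~\cite{AK} and Gut~\cite{Gut}, which characterize for which subsequences $(L_n)$ one has $\sum_n\mu\{|M(0,L_n)X_0|\ge\delta\}<\infty$ in terms of the integrability class of $X_0$. So your route is genuinely different in presentation: you give a short, self-contained argument tailored to the dyadic case $L_n=2^n$, whereas the paper invokes a black-box theorem covering all of $L^p$, $1\le p<\infty$, at once. What your approach buys is transparency---one sees exactly where the geometric matching $L_n=2^n$ is used (in both the tail sum and the Fubini interchange)---while the paper's citation buys generality, since the same reference also yields the $L^p$ result (Proposition~\ref{Lp2}) and the $L\log L$ rate mentioned in the subsequent remark.
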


\begin{rem} Part of what is shown in Gut~\cite{Gut} is that the rates used for the subsequence results are characteristic of the $L^p$-class.  Also, by the Borel-Cantelli Lemma, the a.e. convergence of all the moving averages will imply the complete convergence for $(M(0,L_n)X_0)$ for appropriate $L_n$, and hence force $X_0$ to be in the corresponding $L^p$-class.

Moreover, an intermediate space like $L\log L$ has its own rate for all moving averages to converge: one should use $L_n = \lfloor \exp (\sqrt n)\rfloor$.  This is exactly what is needed for having every mean-zero function $X_0 \in L\log L$ to have $\sum\limits_{k=1}^\infty \mu \{|M(0,L_n)X_0| \ge \delta \} < \infty$ for all $\delta > 0$.  Hence, again, this is the rate that is needed on $(L_n)$ to have all moving averages converging a.e.
\qed \end{rem}

\subsection {\bf Random selection approach} \label{IIDconstruction}

Here is another approach to answering the conjecture that works for a different class of functions in $L^2(\mu)$.  We use the Hsu and Robbins Theorem~\cite{HR}.
Given a mean-zero $f \in L^2(\mu)$, assume there is an isomorphism $\rho$ from a standard probability space $Y$ to $X$ and there is a map $\alpha$ of $Y$ such that $(f\circ \rho \circ \alpha^k: k \ge 1)$ are IID.  Then if we take the map $T = \rho\circ \alpha \circ \rho^{-1}$, we would have $(f\circ T^k: k \ge 1)$ IID, and so  $f$ will be good for all moving averages using $T$ because of the Hsu and Robbins Theorem.

One could hope that this applies to every mean-zero function $f \in L^2(\mu)$ by considering a function $h$ with the same distribution function as $f$ but for which there is a map $\alpha$ such that the sequence $(h\circ \alpha^k: k \ge 1)$ is IID.  But this is not generally a viable approach because functions with the same distribution are not necessarily invertible, measure-preserving rearrangements of each other.

However, for measurable sets this is true.  That is, if $A_1,A_2$ have the same measure in a standard probability space $Y$, then there is a measure-preserving invertible map $\nu$ of $Y$ such that $\nu (A_1) = A_2$ up to null sets.  We can use this to show that for any measurable $A$ in $X$, the function $f = 1_A - \mu(A)$ is good for all moving averages with respect to some map $T$ on $X$.  Indeed, take the product space $Y = \prod\limits_{-\infty}^\infty  X$ with the product measure given by $m$ in each coordinate.  Let $\alpha$ be the usual coordinate shift mapping on $Y$.  Take the first coordinate projection $\pi:Y \to X$ and let $h = f\circ \pi$.  Take any isomorphism $\omega: X \to Y$.   The sets $\omega (A)$ and $\pi^{-1} A$ in $Y$ have the same measure.  So there is an isomorphism $\nu$ of $Y$ such that $\nu \circ \omega (A) = \pi^{-1} A$ up to null sets.  But then take the isomorphism $\rho$ and consider the mapping $T$ of $X$ given by $T = \rho^{-1}  \circ \alpha\circ \rho $.  Because the sets $(\alpha^k (\pi^{-1} (A)): k \in \mathbb Z)$ are jointly independent, the functions $(h\circ \alpha^k: k \ge 1)$ are IID.  Hence, so are the functions $(f \circ T^k: k \ge 1)$.

Now we apply the Hsu and Robbins Theorem to show that $f$ is good for all moving averages with respect to $T$.
It is possible to extend this construction to mean-zero, simple functions $f$, not just functions of the form $1_A - \mu(A)$.

One can ask when this can be extended to other functions:
\bigskip

\noindent {\bf Question} Given $f \in L^p(\mu), 1 \le p \le \infty$, when is there an (ergodic) map $T$ of $X$ such that $(f\circ T^k: k \in \mathbb Z)$ is IID?
\bigskip

This is not always possible. For example, if the function $f$ has $f^{-1}(\beta) = \beta$, then there is no such map $\sigma$.  For example, take $X = [0,1]$ in Lebesgue measure and let $f(x) = x$ for all $x \in [0,1]$.  B. Weiss has observed that what one needs is something like the $\sigma$-algebra $f^{-1}(\beta_{\mathbb R})$, with $\beta_{\mathbb R}$ being the Borel sets in $\mathbb R$, admits an {\em independent complement} $\mathcal A$ in $\beta$ such that $(X,\mathcal A,m)$ is isomorphic to a standard, non-atomic space too.  Here we are taking $\mathcal A$ to be a sub-$\sigma$-algebra in $\beta$ such that $\mathcal A$ and $f^{-1}(\beta_{\mathbb R})$ are independent.

\subsection{Continuous functions} 
We can apply the Hsu-Robbins theorem together with Proposition 
\ref{charactergood} to prove there are Bernoulli maps which 
produce univerally convergent moving averages in cases 
where the process is not i.i.d.  
Suppose $(X,d)$ is a compact metric space with metric $d$.  
Given a partition $P$ of $X$ and element $p\in P$, define 
\[
\diam(p)=\sup{ \{ d(x,y) : x,y \in p \} } 
\]
and $\diam(P) = \sup_{p\in P} \diam(p)$.  
Given a finite entropy Bernoulli transformation $T$, 
there exists a finite generating partition $P$.  
In particular, for each $n \in \mathbb Z^+$, 
each partition $T^{-n}P$ and $T^n P$ is independent of 
\[
P_n = \bigvee_{i=1-n}^{n-1} T^{i}P 
\]
and the sequence $P_n$ generates the sigma 
algebra $\B$.  
Next we prove given a continuous function $f$ on a compact space, 
there is a large class of Bernoulli transformations such 
that moving averages converge pointwise almost everywhere. 
\begin{thm}
\label{C-of-B}
Suppose $(X, \B, \mu)$ is a standard probability space 
and $(X,d)$ is a compact metric space.  
Let $T$ be a Bernoulli transformation with finite generating 
partition $P$ such that 
\[
\lim_{n\to \infty} \diam(P_n) = 0 . 
\]
If $f$ is a continuous function on $(X,d)$, then 
all moving averages $M(v_n,L_n)^T f$ converge pointwise a.e. 
to $\int_X f d\mu$ for 
$v_n\in \mathbb Z$ and $L_n \in \mathbb Z^+$ with $L_n$ is increasing. 
\end{thm}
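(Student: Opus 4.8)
The strategy is to approximate the continuous function $f$ by a function that is measurable with respect to a finite piece of the generating partition, so that along that approximation the process becomes genuinely i.i.d.\ and Hsu--Robbins (Proposition~\ref{HRE}) plus the characterization of good functions (Proposition~\ref{charactergood}) apply directly; then control the approximation error uniformly. Concretely, fix $\varepsilon > 0$. Since $f$ is continuous on the compact space $(X,d)$ it is uniformly continuous, so there is $\delta_0 > 0$ such that $d(x,y) < \delta_0$ implies $|f(x) - f(y)| < \varepsilon$. By hypothesis $\diam(P_n) \to 0$, so choose $n$ with $\diam(P_n) < \delta_0$. Define $f_\varepsilon$ to be constant on each atom of $P_n$ (say equal to the value of $f$ at a chosen point of that atom, or the conditional expectation $E(f \mid P_n)$ — either works). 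Then $\|f - f_\varepsilon\|_\infty < \varepsilon$, and in particular $|\int_X f\,d\mu - \int_X f_\varepsilon\, d\mu| < \varepsilon$.

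The key point is that $f_\varepsilon$ is measurable with respect to $P_n = \bigvee_{i=1-n}^{n-1} T^i P$. The stated independence structure — each $T^{kn}P$-block is independent of the rest in the appropriate sense — should give that the ``blocked'' process obtained from $f_\varepsilon$ along the arithmetic progression $T^{(2n-1)j}$, $j \in \mathbb Z$, consists of i.i.d.\ random variables, since the relevant $2n-1$-wide windows of the partition are then disjoint and hence independent. More carefully, $f_\varepsilon \circ T^i$ for $i$ in a residue class mod $2n-1$ (suitably shifted) is a function of the $(2n-1)$-block of $P$'s centered there, and consecutive such blocks are independent; so for each fixed residue $r$ the sequence $(f_\varepsilon \circ T^{r + (2n-1)j} : j \ge 0)$ is i.i.d. Each such i.i.d.\ sequence is bounded, hence in $L^2$, so by Hsu--Robbins its ergodic averages converge completely to its mean, which is $\int_X f_\varepsilon\, d\mu$ by stationarity. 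Splitting $M(0,N)^T f_\varepsilon$ into the $2n-1$ residue-class subsums and applying complete convergence to each (with a union bound over the finitely many residues and a rescaling of $\delta$), one gets $\sum_{N} \mu\{|M(0,N)^T f_\varepsilon - \int f_\varepsilon| \ge \delta\} < \infty$ for every $\delta > 0$. By Proposition~\ref{charactergood} (applied to the mean-zero function $f_\varepsilon - \int f_\varepsilon\, d\mu$), $f_\varepsilon$ is good for all moving averages with respect to $T$: for every $(v_N, L_N)$ with $L_N$ increasing, $M(v_N, L_N)^T f_\varepsilon \to \int_X f_\varepsilon\, d\mu$ a.e.

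Finally I combine the pieces. For any moving average $M(v_N, L_N)^T f$ we have the pointwise bound $|M(v_N,L_N)^T f - M(v_N,L_N)^T f_\varepsilon| \le \|f - f_\varepsilon\|_\infty < \varepsilon$ everywhere. Hence on the full-measure set where $M(v_N,L_N)^T f_\varepsilon \to \int f_\varepsilon\, d\mu$,
\[
\limsup_{N\to\infty} \Big| M(v_N,L_N)^T f - \int_X f\, d\mu \Big| \le \varepsilon + \Big| \int_X f_\varepsilon\, d\mu - \int_X f\, d\mu \Big| < 2\varepsilon .
\]
Taking $\varepsilon = 1/k$ and intersecting the resulting full-measure sets over $k \in \mathbb N$ yields a single full-measure set on which $M(v_N,L_N)^T f \to \int_X f\, d\mu$, as desired.

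**Main obstacle.** The routine estimates (uniform continuity, the $\varepsilon$-approximation, the final $\limsup$ bound) are straightforward. The delicate step is the i.i.d.\ claim: verifying precisely that, because $f_\varepsilon$ depends only on the window $P_n$ and because $T^n P$ and $T^{-n}P$ are independent of $P_n$, the process restricted to an arithmetic progression of step $2n-1$ (or $2n$, with a harmless choice of offset) is exactly i.i.d., and then bookkeeping the decomposition of $M(0,N)^T f_\varepsilon$ into finitely many such progressions so that Hsu--Robbins can be invoked on each and the complete-convergence series for the whole average still converges. One must be slightly careful that the number of terms in each residue class is $\lfloor N/(2n-1)\rfloor$ or so, and that the finitely many ``edge'' terms contribute negligibly; but these are bounded contributions divided by $N$, so they cause no real trouble.
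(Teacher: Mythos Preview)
Your proposal is correct and follows essentially the same approach as the paper: approximate $f$ by a $P_n$-measurable function (the paper uses the conditional expectation $\mathcal{E}(f\mid P_n)$), decompose the resulting process into finitely many i.i.d.\ subsequences along arithmetic progressions of step $2n$ (your $2n-1$ also works given the index range $1-n,\dots,n-1$), apply Hsu--Robbins to each, and invoke Proposition~\ref{charactergood}. The only cosmetic difference is that the paper proves complete convergence of $M(0,m)^T f$ directly for each $\delta$ (choosing $n$ depending on $\delta$) rather than first proving $f_\varepsilon$ is good and then passing to the limit via the uniform bound, but the content is the same.
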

\begin{proof}
Although, the process $X_i = f\circ T^i$ may not be i.i.d., 
we can approximate $X_i$ by a finite collection of i.i.d. processes.  
Since $f$ is continuous on a compact space $X$, then 
$f$ is uniformly continuous.  
Thus, given the finite generating partition $P$ such that 
$\diam(P_n) \to 0$ and $\delta > 0$, there 
exists $n \in \mathbb Z^+$ such that for $x\in X$, 
\[
|\mathcal{E}\big( f | P_n )(x) - f(x)| < \frac{\delta}{2} , 
\]
where $\mathcal{E}(f|P_n)(x) = \int_{p}f d\mu$ for $x\in p$, 
$p\in P_n$ (conditional expectation of $f$ given $P_n$).  
Also, since $f$ is bounded, we can choose $N \in \mathbb Z^+$ such that 
\[
\frac{n}{N} ||f||_{\infty} < \frac{\delta}{4} . 
\]
Since $T^{2ni}P_n$ are independent for $i \in \mathbb Z^+$, then 
the process $Y_{0,i}(x) = \mathcal{E}(f|P_n)(T^{2ni}x)$ is 
an i.i.d. process.  Moreover, for each $k$, $0\leq k \leq 2n-1$, 
the following processes are each i.i.d., 
\[
Y_{k,i}(x) = \mathcal{E}(f|P_n)(T^{2ni+k}x) . 
\]
Hence, we get 
\begin{align}
\sum_{m=N}^{\infty} \mu \big( \{ x: | \frac{1}{m} \sum_{i=0}^{m-1} 
f(T^i x) | \geq \delta \}\big) &\leq 
\sum_{m=N}^{\infty} \mu \big( \{ x: | \frac{1}{m} \sum_{i=0}^{m-1} 
\mathcal{E}(f|P_n)(T^i x) | \geq \frac{\delta}{2} \}\big) \label{sum-1} \\ 
&\leq  \sum_{m=N}^{\infty} \mu \big( \{ x: | \frac{1}{m} 
\sum_{i=0}^{\lceil \frac{m}{2n}\rceil-1} \sum_{k=0}^{2n-1} 
\mathcal{E}(f|P_n)(T^{2ni+k} x) | \geq \frac{\delta}{4} \}\big) \\ 
&\leq  \sum_{k=0}^{2n-1} \sum_{m=N}^{\infty} \mu \big( \{ x: | \frac{1}{m} 
\sum_{i=0}^{\lceil \frac{m}{2n}\rceil-1} 
\mathcal{E}(f|P_n)(T^{2ni+k} x) | \geq \frac{\delta}{8n} \}\big) \\ 
&\leq  2n \sum_{k=0}^{2n-1} \sum_{m=N}^{\infty} \mu \big( \{ x: | \frac{1}{m} 
\sum_{i=0}^{m-1} 
\mathcal{E}(f|P_n)(T^{2ni+k} x) | \geq \frac{\delta}{8n} \}\big) 
\end{align}
We know from the Hsu-Robbins-Erd\H{o}s theorem that for each 
$k$, $0\leq k < 2n$, 
\[
\sum_{m=1}^{\infty} \mu \big( \{ x: | \frac{1}{m} 
\sum_{i=0}^{m-1} 
\mathcal{E}(f|P_n)(T^{2ni+k} x) | \geq \frac{\delta}{8n} \}\big) 
< \infty . 
\]
Therefore, the sum (\ref{sum-1}) is finite and the theorem follows 
from Proposition~\ref{charactergood}. 
\end{proof}

Note, it is not possible to have all Bernoulli transformations 
produce universally convergent moving averages for a fixed 
non-zero function $f$.  Given any non-zero mean-zero function 
$f$ and ergodic measure preserving transformation $T$, there exists 
a measure preserving isomorphism $\phi$, $v_n$ and increasing $L_n$ 
such that for a.e. $x\in X$, 
\[
\limsup_{n\to \infty} \frac{1}{L_n} \sum_{i=1}^{L_n} 
f(\phi^{-1}\circ T^{v_n+i} \circ \phi(x)) > 0 . 
\]
Here is a sketch of the construction of $\phi$.  
Consider a sequence of Rokhlin towers 
for $T$, and very rarely identify points $x$ in the tower where 
$f(x) > \delta$ for some fixed $\delta$.  Call these points $A_m$ 
for a tower of height $h_m^2$.  Define $\phi_m$ to map a small fraction 
of these points to the top $h_m$ levels of the tower.  
Thus, for $v_{n+i} = i h_m$ 
and $L_{n+i}=h_m+i$ for $0\leq i \leq h_m$, 
we can obtain a bad moving average for most points $x\in X$.  
If $h_m$ grows sufficiently fast, then 
$\lim_{m\to \infty} \phi_m(x) = \phi(x)$ converges for a.e. 
$x\in X$.  

Hoeffding's inequality \cite{Hoef} may be used to prove 
a stronger result for more general sequences $L_n$.  
\begin{lem}[Hoeffding's inequality]
Let $X_1, X_2, \ldots , X_n$ be independent random variables such that 
$a_i \leq X_i \leq b_i$ almost surely.  If 
$S_n = \sum_{i=1}^{n} X_i$, then for all $t>0$, 
\[
\mu \big( | S_n - E[S_n]| \geq t \big) \leq 
2 \exp{\Big( \frac{-2t^2}{\sum_{i=1}^{n} (b_i-a_i)^2} \Big)} . 
\]
\end{lem}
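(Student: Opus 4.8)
The plan is to prove Hoeffding's inequality by the classical exponential-moment (Chernoff) method. \emph{First, reductions.} It suffices to establish the one-sided bound $\mu\bigl(S_n - E[S_n] \geq t\bigr) \leq \exp\bigl(-2t^2/\sum_{i=1}^n (b_i-a_i)^2\bigr)$, since applying this to the independent variables $-X_i$, which satisfy $-b_i \leq -X_i \leq -a_i$ and hence have the same interval lengths $b_i - a_i$, bounds the lower tail $\mu\bigl(S_n - E[S_n] \leq -t\bigr)$, and a union bound then produces the factor $2$. Centering, i.e. replacing $X_i$ by $X_i - E[X_i]$, we may also assume each $X_i$ is mean-zero; write $c_i = b_i - a_i$ and $C = \sum_{i=1}^n c_i^2$.

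\emph{Second, the Chernoff step.} For every $s > 0$, Markov's inequality applied to the nonnegative random variable $e^{sS_n}$, together with independence, gives
\[
\mu(S_n \geq t) \leq e^{-st}\,E\bigl[e^{sS_n}\bigr] = e^{-st}\prod_{i=1}^{n} E\bigl[e^{sX_i}\bigr].
\]
The problem is thereby reduced to the one-variable estimate known as Hoeffding's lemma: if $Y$ is mean-zero with values in $[a,b]$ and $c = b-a$, then $E[e^{sY}] \leq e^{s^2 c^2/8}$ for all $s$.

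\emph{Third, Hoeffding's lemma.} By convexity of $y \mapsto e^{sy}$, for $y \in [a,b]$ one has $e^{sy} \leq \frac{b-y}{c}e^{sa} + \frac{y-a}{c}e^{sb}$; taking expectations and using $E[Y] = 0$ yields $E[e^{sY}] \leq \frac{b}{c}e^{sa} - \frac{a}{c}e^{sb} = e^{\psi(s)}$ with $\psi(s) = sa + \log\bigl(\tfrac{b}{c} - \tfrac{a}{c}e^{sc}\bigr)$. A direct computation shows $\psi(0) = \psi'(0) = 0$, and, writing $p = -a/c \in [0,1]$, that $\psi''(s) = c^2\,q(s)\bigl(1 - q(s)\bigr)$ for a suitable $q(s) \in (0,1)$ — this is $c^2$ times the variance of a two-point random variable — so $\psi''(s) \leq c^2/4$ for all $s$. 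Taylor's theorem with remainder then gives $\psi(s) \leq s^2 c^2/8$, proving the lemma.

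\emph{Fourth, optimize in $s$.} Combining the previous two steps, $\mu(S_n \geq t) \leq \exp\bigl(-st + \tfrac{s^2}{8}C\bigr)$ for every $s > 0$. The exponent is minimized at $s = 4t/C$, where it equals $-2t^2/C$, so $\mu(S_n \geq t) \leq \exp(-2t^2/C)$, and by the first step the two-sided inequality in the statement follows. The only genuinely technical point is the estimate $\psi''(s) \leq c^2/4$ in the third step — interpreting $\psi''$ as a multiple of a variance and invoking the fact that a random variable supported in an interval of length $c$ has variance at most $c^2/4$; the remaining steps are routine.
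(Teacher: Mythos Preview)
Your proof is correct and is exactly the classical Chernoff--Hoeffding argument: reduce to the one-sided bound, apply Markov's inequality to $e^{sS_n}$, factor by independence, bound each factor via Hoeffding's lemma (convexity plus the $\psi''\le c^2/4$ variance estimate), and optimize in $s$. All steps are sound.

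The paper, however, does not prove this lemma at all. It is stated as a quotation of a standard result, with a citation to Hoeffding's original 1963 paper \cite{Hoef}, and then immediately applied to obtain summable tail bounds for the i.i.d.\ pieces $\mathcal{E}(f\mid P_n)\circ T^{2ni+k}$. So there is no ``paper's proof'' to compare against; you have supplied the textbook argument where the authors simply invoke the literature.
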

Thus, if $L_n \geq n^{\alpha}$ for $\alpha > 1/2$ and $g$ is bounded, 
mean-zero with $X_i = g\circ T^i$ i.i.d., then 
\begin{align}
\mu \big( | \frac{1}{L_n} \sum_{i=0}^{L_n-1} g(T^i x)| \geq \delta \big) 
&= \mu \big( | \sum_{i=0}^{L_n-1} g(T^i x)| \geq \delta L_n \big) \\ 
&\leq 2 \exp{\Big( \frac{-2(\delta L_n)^2}{4n ||g||_{\infty}^2} \Big)} \\ 
&\leq 2 \exp{\Big( \frac{-2\delta^2 n^{2\alpha-1}}{4||g||_{\infty}^2} \Big)} . 
\end{align}
Since these terms are summable, 
applying these inequalities with $g = \mathcal{E}(f|P_n)$ 
in Theorem \ref{C-of-B} yields the more general result 
for $L_n \geq n^{\alpha}$ with $\alpha > 1/2$.  

Another remark is that Theorem \ref{C-of-B} can be generalized 
to functions $f$ which are piecewise uniformly continuous 
on a finite number of elements from some partition $P_n$.  
This will be used in the next section to prove a general 
result for any bounded measurable function $f$. 

\subsection{Bounded functions}
In this section, we obtain an existence result for any bounded 
measurable function $f$. 
\begin{thm}
\label{B-of-B}
Let $(X, \B, \mu)$ be a standard probability space and 
$f:X\to \real$ a bounded measurable function.  
There exists a Bernoulli transformation $T$ such that 
for any $\alpha > 1/2$, all moving averages $M(v_n,L_n)^T f$ 
converge pointwise a.e. to $\int_X f d\mu$ for 
$v_n\in \mathbb Z$ and $L_n \geq n^{\alpha}$. 
\end{thm}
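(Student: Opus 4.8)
The plan is to reduce the case of a general bounded measurable $f$ to the ``piecewise uniformly continuous'' version of Theorem~\ref{C-of-B} mentioned in the remark following its proof, by building the Bernoulli map $T$ so that $f$ becomes a function that is essentially constant (hence trivially uniformly continuous) on each element of some refining sequence of independent partitions. Concretely, I would take an abstract Bernoulli base, say $Y = \prod_{-\infty}^{\infty} Z$ with $Z$ a finite or countable alphabet equipped with a Bernoulli measure, and the shift $\alpha$ on $Y$; the Kolmogorov $\sigma$-algebra is generated by the independent partitions $Q_n = \bigvee_{i=1-n}^{n-1}\alpha^i Q$ where $Q$ is the time-zero partition. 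The point is that on this model the zero-coordinate partition is genuinely independent of its translates, so the diameter-to-zero hypothesis of Theorem~\ref{C-of-B} is automatic once we transport a metric appropriately.

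First I would fix a sequence of finite measurable partitions $R_1 \le R_2 \le \cdots$ of $X$ that generate $\B$ mod $\mu$ and on which $f$ is nearly constant: choose $R_k$ so that the oscillation of $f$ on each atom of $R_k$ is at most $2^{-k}\|f\|_\infty$, which is possible by approximating $f$ by simple functions (partition the range of $f$ into $2^k$ intervals and pull back). Next, I would realize $(X,\B,\mu)$ together with this refining sequence as a factor — in fact an isomorphic copy — of a Bernoulli shift: using that any standard nonatomic space with a generating refining sequence of finite partitions is isomorphic to a Bernoulli shift of the same (finite or countable) entropy type, pick an isomorphism $\psi: X \to Y$ carrying $R_k$ to the partition $Q_k$ generated by the first $2k-1$ coordinates (one can always relabel the Bernoulli alphabet so the atom counts match at each stage, possibly padding with null sets or passing to a countable alphabet). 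Then set $T = \psi^{-1}\circ \alpha \circ \psi$. By construction $T$ is Bernoulli, and the partition $P := R_1$ (or rather the image of $Q$) has the property that $T^{\pm n}P$ is independent of $P_n = \bigvee_{i=1-n}^{n-1} T^i P$, and $P_n$ refines $R_n$ up to null sets, so $P_n$ generates $\B$.

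Now I would run the argument of Theorem~\ref{C-of-B} with the Hoeffding-strengthened estimate: given $\delta > 0$, pick $n$ with $\|\mathcal E(f\mid P_n) - f\|_\infty < \delta/2$ (possible since $\mathrm{osc}_{R_n} f \le 2^{-n}\|f\|_\infty \to 0$ and $P_n$ refines $R_n$, so $\mathcal E(f\mid P_n)$ is within $2^{-n}\|f\|_\infty$ of $f$ uniformly); then the bounded function $g = \mathcal E(f\mid P_n)$ has the property that, for each residue $k$ mod $2n$, the variables $i \mapsto g(T^{2ni+k}x)$ are i.i.d. and bounded, so Hoeffding gives $\mu\{|M(0,\lfloor m^\alpha\rfloor)^{T}(\text{shifted }g)| \ge \delta/(8n)\} \le 2\exp(-c\, m^{2\alpha-1})$, which is summable precisely because $\alpha > 1/2$. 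Splitting the ergodic average of $f$ into the $2n$ arithmetic-progression subsums as in the displayed chain of inequalities in the proof of Theorem~\ref{C-of-B}, and absorbing the boundary error $\frac{n}{L_m}\|f\|_\infty$ (which is $o(1)$ since $L_m \ge m^\alpha \to \infty$) into $\delta/4$, I conclude $\sum_m \mu\{|M(0,L_m)^T f - \int f\,d\mu| \ge \delta\} < \infty$ for every $\delta>0$, i.e.\ $M(0,L_m)^Tf$ converges completely to $\int_X f\,d\mu$. Finally, Proposition~\ref{charactergood} (applied to $f - \int f\,d\mu$, and in the subsequence form of Proposition~\ref{Divgen} for a fixed $(L_n)$) upgrades complete convergence of the straight averages along $L_n \ge n^\alpha$ to a.e.\ convergence of \emph{all} moving averages $M(v_n,L_n)^Tf$ with $L_n \ge n^\alpha$, since $\mu\{|M(v_n,L_n)^Tf - \int f\,d\mu| \ge \delta\} \le \mu\{|M(0,L_n)^Tf - \int f\,d\mu| \ge \delta\}$ by measure-preservation.

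The main obstacle I anticipate is the reduction step: making precise that $(X,\B,\mu)$ with the chosen refining sequence $(R_k)$ embeds isomorphically into a Bernoulli shift in such a way that $R_k$ corresponds to a genuinely independent block of coordinates. This requires care in matching atom cardinalities at each level (forcing either a careful choice of the $R_k$ so that $|R_k|/|R_{k-1}|$ is constant, or the use of a countable alphabet with a suitable Bernoulli weighting), and in checking that no entropy obstruction arises — but since we are free to choose $T$ and only need it Bernoulli (not of any prescribed entropy), there is enough slack. Everything after that is a routine re-run of the Theorem~\ref{C-of-B} computation with Hoeffding in place of Hsu--Robbins--Erd\H{o}s.
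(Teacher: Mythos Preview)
Your overall strategy is sound and, once the Hoeffding estimate and the reduction to Proposition~\ref{charactergood}/\ref{Divgen} are in place, the argument closes exactly as you say. But your route is genuinely different from the paper's, and the step you flag as ``the main obstacle'' is a real gap, not just a detail.

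The paper does \emph{not} try to make $f$ globally continuous in the symbolic metric. Instead it uses Lusin's theorem to write $X=D_n\cup E_n$ with $f$ continuous on the compact set $D_n$ and $\mu(E_n)$ small and dyadic; it then builds a single measure-preserving $\phi$ so that $\phi(E_n)$ is a dyadic interval for every $n$, and sets $T=\phi^{-1}S\phi$ for a fixed Bernoulli shift $S$ on $[0,1]$ whose dyadic partitions sit inside the cylinder $\sigma$-algebras. The ergodic average of $f$ is then split as a sum over $E_n$ (controlled because $\mathbbm 1_{\phi(E_n)}$ is cylinder-measurable, giving i.i.d.\ blocks and Hoeffding) plus a sum over $D_n$ (controlled by the Theorem~\ref{C-of-B} argument, using uniform continuity of $f|_{D_n}$). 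No global $L^\infty$ approximation $\mathcal E(f\mid P_n)\to f$ is ever claimed or needed.

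Your plan requires exactly that global approximation, and the justification you give for it is not correct. The assertion ``any standard nonatomic space with a generating refining sequence of finite partitions is isomorphic to a Bernoulli shift carrying $R_k$ to $Q_k$'' is false: the cylinder sequence $(Q_k)$ of a Bernoulli shift has a rigid product structure (each $Q_k$-atom splits into $|Q|^2$ pieces in $Q_{k+1}$ with conditional measures $p_ip_j$), and an arbitrary level-set sequence $(R_k)$ coming from $f$ has no reason to match this. Relabelling the alphabet or padding with null sets does not create the missing independence. Two smaller issues compound this: your $(R_k)$, built by pulling back range partitions of $f$, generate only $\sigma(f)$, not $\mathcal B$; and you only need $Q_k$ to \emph{refine} $\psi(R_k)$, not to equal it. The refinement version is true---one can build an isomorphism $\Psi:X\to[0,1]^{\mathbb Z}$ coordinate by coordinate so that the $R_k$-atom of $x$ is determined by $(\Psi_{-k+1}(x),\dots,\Psi_{k-1}(x))$---but this is a genuine construction that your proposal does not supply, and it naturally produces an infinite-entropy Bernoulli map rather than the finite- or countable-alphabet shift you describe. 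The paper's Lusin-based splitting sidesteps this difficulty entirely.
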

\begin{proof}
It is sufficient to prove this theorem where 
$X=[0,1]$, $\mu$ is Lebesgue measure and 
$f:X\to \real$ is a bounded mean-zero function.  
Apply Lusin's theorem to produce a sequence $D_n^{\prime}$ 
for $n\geq 1$ of closed subsets such that 
$D_{n}^{\prime} \subseteq D_{n+1}^{\prime}$ for $n\geq 1$, 
$f$ is continuous on $D_n^{\prime}$ and 
$E_n^{\prime} = X\setminus D_n^{\prime}$ satisfies 
\[
\sum_{n=1}^{\infty} \mu(E_n^{\prime}) < \infty . 
\]
For $n\in \mathbb Z^+$, let 
\[
m_n = \min{\{ m\in \mathbb Z^+\cup \{0\}: \frac{m}{2^n} \geq \mu(E_n)\}} . 
\]
Choose an open subset $E_n^{\prime\prime} \subset D_n^{\prime}$ such 
that $\mu(E_n^{\prime\prime}) = \frac{m_n}{2^n}-\mu(E_n^{\prime})$.  
Define $E_n = E_n^{\prime}\cup E_n^{\prime\prime}$ and 
$D_n = X\setminus E_n$.  
Define $\phi_1:[0,1]\to [0,1]$ by 
\begin{eqnarray*} 
\phi_1(x) = 
\left\{\begin{array}{ll}
\mu \big( E_1 \cap [0,x)\big) \  & \mbox{if $x \in E_1$}, \\ 
\mu \big( E_1\big) +\mu \big( D_1\cap [0,x)\big) & \mbox{if $x \in D_1$}.
\end{array}
\right.
\end{eqnarray*}
The function $\phi_1$ is invertible and $\mu$-preserving. 
Define $\phi_n$ inductively as 
\begin{eqnarray*} 
\phi_n(x) = 
\left\{\begin{array}{ll}
\mu \big( E_n \cap [0,x)\big) \  & \mbox{if $x \in E_n$}, \\ 
\mu \big( E_n\big) +\mu \big( (E_{n-1}\setminus E_n) \cap [0,x)\big) & \mbox{if $x \in E_{n-1}$} \\ 
\phi_{n-1}(x) & \mbox{if $x \in D_{n-1}$} . 
\end{array}
\right.
\end{eqnarray*}
For almost every $x\in [0,1]$, 
$\phi(x) = \lim_{n\to \infty} \phi_n(x)$ exists and 
the map $\phi:[0,1]\to [0,1]$ is invertible and measure preserving.  

Let $S:[0,1]\to [0,1]$ be a Bernoulli shift with a sequence 
$k_n$ such that dyadic intervals of length $\frac{1}{2^n}$ 
are measurable with respect to the partition 
\[
\bigvee_{i=-k_n-1}^{k_n-1} S^{i} \big( [0,1/2)), S^i([1/2, 1) \big) . 
\]
Define our transformation $T$ as 
\[
T = \phi^{-1} \circ S \circ \phi . 
\]
For convenience, assume $f$ has mean-zero, or similarly let 
$f = f - \int fd\mu$.  
Given $\delta > 0$, choose $n\in \mathbb Z^+$ such that 
\[
\mu (E_n) < \frac{\delta}{4||f||_{\infty}} . 
\]
Thus, if $d_n = \int_{D_n} f d\mu$, then $d_n < \frac{\delta}{4}$.  
Hence, 
\begin{align}
\mu &\big( \{ x : \frac{1}{L_m} \sum_{i=0}^{L_m-1} f (T^i x) 
\geq \delta \} \big) \\ 
&\leq 
\mu \big( \{ x: \frac{1}{L_m} \sum_{i=0}^{L_m-1} \mathbbm{1}_{E_n} (T^i x) 
\geq \frac{\delta}{2||f||_{\infty}} \} \big) + 
\mu \big( \{ x: \frac{1}{L_m} \sum_{i=0}^{L_m-1} 
\mathbbm{1}_{D_n}(T^i x) f(T^i x) \geq \frac{\delta}{2} \} \big) \\ 
&= 
\mu \big( \{ x: \frac{1}{L_m} \sum_{i=0}^{L_m-1} 
\mathbbm{1}_{\phi(E_n)} (S^i x) 
\geq \frac{\delta}{2||f||_{\infty}} \} \big) + 
\mu \big( \{ x: \frac{1}{L_m} \sum_{i=0}^{L_m-1} 
\mathbbm{1}_{D_n}(T^i x) f(T^i x) 
\geq \frac{\delta}{2} \} \big) \\ 
&= 
\mu \big( \{ x: \frac{1}{L_m} \sum_{i=0}^{L_m-1} 
\mathbbm{1}_{\phi(E_n)} (S^i x) - \mu(E_n) 
\geq \frac{\delta}{2||f||_{\infty}} - \mu(E_n) \} \big) \label{B-of-B-eqn1} \\ 
&+ 
\mu \big( \{ x: \frac{1}{L_m} \sum_{i=0}^{L_m-1} 
\mathbbm{1}_{D_n}(T^i x) f(T^i x) - d_n
\geq \frac{\delta}{2} - d_n \} \big) \label{B-of-B-eqn2}
\end{align}
Since $X_i(x)=\mathbbm{1}_{\phi(E_n)} (S^{2ik_n+k} x)$ is i.i.d., 
then the sum obtained from the terms 
in (\ref{B-of-B-eqn1}) can be decomposed into 
$2k_n$ infinite sums which are all finite.  Thus, 
for $\alpha > 1/2$ and $L_m \geq m^{\alpha}$, 
\[
\sum_{m=1}^{\infty} \mu \big( \{ x: \frac{1}{L_m} \sum_{i=0}^{L_m-1} 
\mathbbm{1}_{\phi(E_n)} (S^i x) - \mu(E_n) 
\geq \frac{\delta}{2||f||_{\infty}} - \mu(E_n) \} \big) < \infty . 
\]
Since $f$ is continuous on the compact set $D_n$, then applying 
the same technique in Theorem \ref{B-of-B} to the terms 
in (\ref{B-of-B-eqn2}) show that 
\[
\sum_{m=1}^{\infty} \mu \big( \{ x: \frac{1}{L_m} \sum_{i=0}^{L_m-1} 
\mathbbm{1}_{D_n}(T^i x) f(T^i x) - d_n 
\geq \frac{\delta}{2} - d_n \} \big) < \infty . 
\]
Hence, for $L_m \geq m^{\alpha}$, 
\[
\sum_{m=1}^{\infty} \mu \big( \{ x : \frac{1}{L_m} 
\sum_{i=0}^{L_m-1} f (T^i x) \geq \delta \} \big) < \infty . 
\]
A similar argument shows that 
\[
\sum_{m=1}^{\infty} \mu \big( \{ x : \frac{1}{L_m} 
\sum_{i=0}^{L_m-1} f (T^i x) \leq -\delta \} \big) < \infty . 
\]
This completes our proof. 
\end{proof}

\subsection {\bf Series approach} \label{Seriesapproach}

If we have $f \in L^p(\mu), 1 < p < \infty$, we could seek to produce a map $T$ with the  property that $\sum\limits_{n=1}^\infty \|M(0,n)^Tf\|^p_p < \infty$.  This guarantees that the function is good for all moving averages with respect to $T$.  Coboundaries with transfer functions in $L^p(\mu)$ have this series property, so in cases where $f$ is a $T$-coboundary with transfer function in $L^1(\mu)$, then this would be true.  In any case, it is also possible that it would easier to prove that the series condition can hold with an appropriate choice of $T$ depending on $f$.  In some sense this is what one is showing when one can construct a map $T$ for $f\in L^p(\mu)$ such that $(f\circ T^k: k \ge 1)$ are IID, except that for terms in this series one has to pass to a subsequence $(M(0,\lfloor n^D\rfloor )f)$ with $D(p-1) > 1$.

One might ask if the stronger property holds: that it is possible for every non-zero, mean-zero function $f\in L^1(\mu)$ to admit an ergodic map $T$ such that
$\sum\limits_{n=1}^\infty \|M(0,n)^Tf\|_1 < \infty$?  But this never occurs.  Indeed, trivially,
\[\sum\limits_{n=1}^N \|M(0,n)^Tf - M(0,n+1)^T f\|_1 \le \sum\limits_{n=1}^N \|M(0,n)^Tf\|_1 + \sum\limits_{n=1}^N \|M(0,n+1)^T f\|_1.\]
Also,
\[\sum\limits_{n=1}^N \|M(0,n)^Tf - M(0,n+1)^T f\|_1 =  \sum\limits_{n=1}^N \|\frac {M(0,n)^Tf}{n+1} - \frac {f\circ T^{n+1}}{n+1}\|_1.\]
Hence,
\[\sum\limits_{n=1}^N \frac {\|f\|_1}{n+1} \le 3\sum\limits_{n=1}^{N+1}\|M(0,n)^Tf\|_1.\]
Therefore, the divergence of the harmonic series proves that $\sum\limits_{n=1}^\infty \|M(0,n)^T (f)\|_1 = \infty$ for all non-zero $f$.  

\begin{rem}
To summarize this section on IID sequences, here is what the results say about statistical averaging of IID data.  If you are averaging increasingly large blocks of terms and do not have control of where they start, then you will do best if you have each average using at least twice the amount of data you used before.  That is, one should use something like $L_n = 2^n$ terms when computing the moving average.  Then you do not need to worry about moments not being finite, as long as the necessary condition for the Law of Large Numbers is in place: $X_0 \in L^1$.
\end{rem}

\section {\bf When the function is fixed}\label{basicsdata}

Now we consider the polar case to Section~\ref{basicsmap}: we fix the mean-zero function $f\in L^1(\mu)$ and ask for results that give $f$ being good or bad as we vary the map and the structure of the moving average.

\begin{prop}\label{mostmapsleveldiv}  Fix an increasing $(L_n)$ in $\mathbb Z^+$ and fix a non-zero, mean-zero $f \in L^1(\mu)$.  Then there is some $\delta > 0$ and a dense $G_\delta$ set of ergodic maps $\mathcal B$ such that for $T\in \mathcal B$ we have $\sum\limits_{n=1}^\infty \mu \{|M(0,L_n)^T f| \ge \delta\} = \infty$.
\end{prop}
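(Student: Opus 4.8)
The plan is to build the dense $G_\delta$ set $\mathcal B$ by a Baire category / genericity argument inside the Polish group of invertible measure preserving transformations of $(X,\B,\mu)$ with the weak topology, exploiting the Rokhlin lemma and the fact that the complete convergence statement $\sum_n \mu\{|M(0,L_n)^Tf|\ge\delta\} < \infty$ is, after being negated, an open-type property that one can force on long towers. First I would fix a value of $\delta$: since $f$ is non-zero and mean-zero, $\|f\|_1 > 0$ and there is some $\delta_0 > 0$ and a set of positive measure on which $|f| \ge \delta_0$; I will take $\delta$ comparable to a fraction of $\|f\|_1$ (or of $\delta_0\,\mu\{|f|\ge\delta_0\}$) so that on a Rokhlin tower one can arrange a large block of the orbit over which the average $M(0,L_n)^Tf$ is bounded below by $\delta$ on a definite proportion of the base.

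The core of the argument is a single "one-step" lemma: for every $N$, every finite collection of weak-neighbourhood constraints, and every $\delta$ as above, the set of ergodic $T$ for which $\sum_{n=N}^{\infty}\mu\{|M(0,L_n)^Tf|\ge\delta\} > 1$ (say) is weak-dense and open-ish. To get density, start from an arbitrary ergodic $\sigma$; approximate $\sigma$ weakly by a transformation that admits a Rokhlin tower of some huge height $h$ (with $h$ chosen from the sequence $(L_n)$, or with $L_n \le h$ for many $n$) whose base $B$ carries most of the mass where $|f|$ is not too small; then conjugate/perturb on this tower so that along a long run of levels the function $f$ keeps a consistent sign and magnitude $\ge\delta$, which forces $M(0,L_n)^Tf(x)\ge\delta$ for $x$ in a subset of $B$ of measure $\gtrsim \mu(B)/2$ and for a range of indices $n$ proportional to $h$. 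Because $h$ can be taken arbitrarily large while staying in a prescribed weak neighbourhood of $\sigma$, one produces contributions to the tail sum that are bounded below by a positive constant, and iterating over disjoint portions of the tower (or over successive perturbations) makes the tail sum exceed any prescribed bound — in particular diverge. Openness of each such condition in the weak topology is routine since it only involves finitely many of the measures $\mu\{|M(0,L_n)^Tf|\ge\delta\}$ at a time and these depend continuously on $T$. Intersecting over all $N$ and passing to ergodic maps (ergodicity is generic, by Halmos) yields the dense $G_\delta$ set $\mathcal B$ on which $\sum_n \mu\{|M(0,L_n)^Tf|\ge\delta\}=\infty$.

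The main obstacle I expect is the sign-control step: to make $M(0,L_n)^Tf \ge \delta$ on a set of definite measure, it is not enough that $|f|\ge\delta_0$ somewhere — one must arrange the orbit so that the values $f(T^ix)$ for $i$ in a block of length $L_n$ do not cancel. This is where one uses that $f$ is \emph{non-zero} in a quantitative way (not merely non-constant): on a Rokhlin tower one can push the ``high'' part of $f$ — the part of the base where $f > \delta_0$, or $f< -\delta_0$, whichever has larger contribution — so that it sits over a long contiguous range of levels, while the rest of $f$ contributes a controlled amount; then for $x$ in that part of the base, the average over that range is $\ge c\,\delta_0$ for a fixed $c>0$, which is our $\delta$. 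Making this precise requires a careful tower construction (a ``building over a tower'' or cutting-and-stacking style perturbation that is still weak-close to $\sigma$) and careful bookkeeping of how the remaining mass of $f$ affects the average; these are the routine-but-delicate calculations I would defer. I would also note that Lemma~\ref{Div} (the divergent series lemma) is the natural tool to upgrade, once $\sum_n\mu\{|M(0,L_n)^Tf|\ge\delta\}=\infty$ is established for $T\in\mathcal B$, to the statement elsewhere in the paper that $f$ is actually bad for some moving average $M(v_n,L_n)^Tf$.
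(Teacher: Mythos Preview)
Your high-level strategy matches the paper's: write the set of maps $T$ with $\sum_n \mu\{|M(0,L_n)^Tf|\ge\delta\}<\infty$ as an $F_\sigma$ set $\bigcup_K\mathcal B_K$ with each $\mathcal B_K$ closed, show each $\mathcal B_K$ has empty interior, and intersect the complement with the generic set of ergodic maps. Where you diverge from the paper is in the density step, and here you are working much harder than necessary.

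Your plan is to approximate an arbitrary $\sigma$ by a map admitting a tall Rokhlin tower and then to rearrange levels so that the values $f(T^ix)$ have a consistent sign over a long block, forcing $|M(0,L_n)^Tf|\ge\delta$ on a definite fraction of the base. You correctly flag the sign-control step as the main obstacle, and it is: making this precise requires real work (how exactly do you conjugate so that the ``high'' part of $f$ lines up over contiguous levels while staying in a prescribed weak neighbourhood, and how do you bound the contribution of the rest of $f$?). This is not obviously wrong, but it is an outline with a genuine gap at exactly the point you identify.

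The paper avoids all of this with a one-line observation. Choose $\delta>0$ so that $E=\{|f|\ge\delta\}$ has positive measure. Given any $T$ and any weak neighbourhood $N$ of $T$, pick $E_0\subset E$ of sufficiently small positive measure and perturb to a map $T_0\in N$ that is the \emph{identity} on $E_0$. Then for $x\in E_0$ and every $n$ one has $M(0,L_n)^{T_0}f(x)=f(x)$, hence $\mu\{|M(0,L_n)^{T_0}f|\ge\delta\}\ge\mu(E_0)>0$ for all $n$, and the series diverges for the trivial reason that its terms are bounded below. No towers, no sign control, no cancellation issues. (The map $T_0$ is of course not ergodic, but that is irrelevant: one only needs $\mathcal B_K$ to have empty interior in the full group of measure-preserving maps, and then intersects with the dense $G_\delta$ of ergodic maps at the end.) Your Rokhlin-tower machinery is aimed at producing many indices $n$ with a uniform lower bound on $\mu\{|M(0,L_n)^Tf|\ge\delta\}$; the paper gets \emph{all} indices at once by freezing a small piece of the space.
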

\begin{proof}  As usual, we take the maps $\mathcal T$ with the weak topology.  So with the symmetry pseudo-metric, this group is a complete, metric group.
Fix $\delta > 0$.  Consider all $T$ such that the series $\sum\limits_{n=1}^\infty \mu \{|M(0,L_n)^T f| \ge \delta\}$ converges.  That is, consider
\[\mathcal B = \bigcup\limits_{K=1}^\infty \bigcap_{N=1}^\infty \{T: \sum\limits_{n=1}^N \mu \{|M(0,L_n)^T f| \ge \delta\} \le K\}.\]
It is easy to show that in the weak topology, $\{T: \sum\limits_{n=1}^N \mu \{|M(0,L_n)^T f| \ge \delta\}\le K \}$ is a closed set.  So the intersection  $\mathcal B_K = \bigcap_{N=1}^\infty \{T: \sum\limits_{n=1}^N \mu \{|M(0,L_n)^T f| \ge \delta\} \le K\}$ is also a closed set in the weak topology.  Hence, $\mathcal B$ is  an $F_\sigma$ set in the weak topology.

We claim that for a suitable $\delta$, the intersection $\mathcal B_K$ has no interior, and so then the complement of $\mathcal B$ is a dense $G_\delta$ consisting of maps with the property that we want.  Since the ergodic maps are also a dense $G_\delta$ set in $\mathcal T$, the intersection with $\mathcal B$ gives the dense $G_\delta$ set in this result.

Here is an argument that shows this intersection cannot have interior for a suitable $\delta > 0$.  Take $\delta > 0$ such that $\mu \{|f| \ge \delta\} > 0$.  Let $E = \{|f| \ge \delta\}$.  Take any map $T$ and a weak neighborhood $N$ of $T$.  For a subset $E_0$ of $E$ of sufficiently small, but positive, measure, we can construct a map $T_0$ in $N$ which is the identity on $E_0$.  Then for any $n$, $\mu \{|M(0,L_n)^{T_0}f| \ge \delta\} \ge \mu(E_0) > 0$. Hence $T_0$ is not in the intersection with that choice of $\delta$.
\end{proof}

\begin{cor}\label{mostmapsnormdiv} Let $1 \le p < \infty$.  Fix a non-zero, mean-zero $f \in L^p(\mu)$.  Then for a dense $G_\delta$ set of ergodic maps $T$, we have
\[\sum\limits_{n=1}^\infty \|M(0,n)^Tf\|_p^p = \infty.\]
\end{cor}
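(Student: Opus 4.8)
The plan is to deduce this directly from Proposition~\ref{mostmapsleveldiv} by comparing the distributional (level-set) series with the $L^p$-norm series via Markov's inequality. Since $\mu$ is a probability measure, any $f \in L^p(\mu)$ with $p \ge 1$ lies in $L^1(\mu)$, and by hypothesis it is non-zero and mean-zero. So first I would apply Proposition~\ref{mostmapsleveldiv} with the fixed increasing sequence $L_n = n$: this produces some $\delta > 0$ and a dense $G_\delta$ set of ergodic maps $\mathcal B$ such that for every $T \in \mathcal B$,
\[
\sum_{n=1}^\infty \mu \{|M(0,n)^Tf| \ge \delta\} = \infty .
\]

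Next I would invoke Markov's inequality applied to the nonnegative function $|M(0,n)^Tf|^p$: for each $n$,
\[
\mu \{|M(0,n)^Tf| \ge \delta\} = \mu \{|M(0,n)^Tf|^p \ge \delta^p\} \le \frac{1}{\delta^p}\,\|M(0,n)^Tf\|_p^p ,
\]
so that $\|M(0,n)^Tf\|_p^p \ge \delta^p\,\mu \{|M(0,n)^Tf| \ge \delta\}$ for every $n$. Summing over $n$ and using the divergence above gives, for every $T \in \mathcal B$,
\[
\sum_{n=1}^\infty \|M(0,n)^Tf\|_p^p \ge \delta^p \sum_{n=1}^\infty \mu \{|M(0,n)^Tf| \ge \delta\} = \infty .
\]
Since $\mathcal B$ is a dense $G_\delta$ set of ergodic maps, this is exactly the claim.

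There is essentially no obstacle here: all the genuine work --- the $F_\sigma$ decomposition of the ``convergent-series'' maps and the verification that each closed piece has empty interior via a local perturbation that is the identity on a small positive-measure subset of $\{|f| \ge \delta\}$ --- is already carried out in Proposition~\ref{mostmapsleveldiv}. The one point worth emphasizing is that one must use the full strength of that proposition, namely that a \emph{single} $\delta$ works simultaneously for a whole $G_\delta$ set of maps $T$, rather than a $\delta$ that depends on $T$; it is this uniformity that allows the Markov comparison to transfer divergence of the level-set series into divergence of the $L^p$-norm series over all of $\mathcal B$ at once.
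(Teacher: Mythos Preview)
Your proof is correct and matches the paper's argument exactly: the paper also applies Proposition~\ref{mostmapsleveldiv} and the Chebyshev/Markov inequality $\delta^p\,\mu\{|M(0,n)^Tf|\ge\delta\}\le\|M(0,n)^Tf\|_p^p$ to deduce divergence of the $L^p$-norm series. One small remark: the uniformity of $\delta$ over $\mathcal B$, while convenient, is not strictly required here, since even a $T$-dependent $\delta_T>0$ would give $\sum_n\|M(0,n)^Tf\|_p^p\ge\delta_T^p\sum_n\mu\{|M(0,n)^Tf|\ge\delta_T\}=\infty$ for each $T\in\mathcal B$.
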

\begin{proof}  We simply use the inequality $\delta^p \mu \{|M(0,n)^Tf|\ge \delta \} \le \|M(0,n)^Tf\|_p^p$, and Proposition~\ref{mostmapsleveldiv}.
\end{proof}

\begin{rem}   This result is in contrast with the examples in Proposition~\ref{normseriesconv}, given by Derriennic and Lin~\cite{DL}.
\qed \end{rem}

\begin{prop}\label{mostmapsnot}  Fix an increasing $(L_n)$ in $\mathbb Z^+$ and a non-zero, mean-zero $f \in L^1(\mu)$.  Then there is a dense $G_\delta$ of ergodic maps $T$ such that for each $T$ there is some moving average with lengths $(L_n)$ for which $f$ is bad.
\end{prop}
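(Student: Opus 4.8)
The plan is to combine Proposition~\ref{mostmapsleveldiv} with the divergent-series argument already used in the proof of Proposition~\ref{charactergood} (equivalently, to quote the ``also'' clause of Proposition~\ref{Divgen} directly). First I would apply Proposition~\ref{mostmapsleveldiv} to the fixed increasing sequence $(L_n)$ and the fixed non-zero, mean-zero $f$ to obtain a single $\delta > 0$ together with a dense $G_\delta$ set $\mathcal D$ of ergodic maps such that
\[
\sum_{n=1}^\infty \mu\{|M(0,L_n)^T f| \ge \delta\} = \infty
\qquad\text{for every } T \in \mathcal D .
\]
This $\mathcal D$ is the set promised in the statement, so all that remains is to show that for each $T \in \mathcal D$ there is a choice of $(v_n)$ for which $f$ is bad for the moving average $M(v_n,L_n)^T$.

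Fix $T \in \mathcal D$ and set $E_n = \{|M(0,L_n)^T f| \ge \delta\}$, so that $\sum_n \mu(E_n) = \infty$. Applying Lemma~\ref{Div} to the ergodic map $T^{-1}$ produces an increasing sequence $(v_n)$ in $\mathbb Z^+$ with $T^{v_n} x \in E_n$ for infinitely many $n$, for a.e.\ $x$. Since $(M(0,L_n)^T f)\circ T^{v_n} = M(v_n,L_n)^T f$, this gives $|M(v_n,L_n)^T f(x)| \ge \delta$ infinitely often, hence $\limsup_n |M(v_n,L_n)^T f| \ge \delta$ a.e. On the other hand, because $f$ is mean-zero, $M(v_n,L_n)^T f \to 0$ in $L^1$-norm (as recalled in Section~\ref{IntroMovAve}), so some subsequence of these averages converges to $0$ a.e.\ and $\liminf_n |M(v_n,L_n)^T f| = 0$ a.e. Therefore $(M(v_n,L_n)^T f)$ fails to converge a.e.; $f$ is bad for this moving average, whose lengths lie in $(L_n)$. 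One could also simply invoke Proposition~\ref{Divgen}, whose ``also'' clause packages precisely this deduction from divergence of the series.

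I do not expect a serious obstacle: the proposition is essentially a ``pointwise in $T$'' consequence of the divergence delivered by Proposition~\ref{mostmapsleveldiv}, fed into Lemma~\ref{Div}. The only two points needing a little care are (i) that one $\delta$ works simultaneously for all $T$ in the $G_\delta$ set, which is guaranteed by the way $\delta$ is selected in Proposition~\ref{mostmapsleveldiv} (namely through $\mu\{|f| \ge \delta\} > 0$, a condition on $f$ alone), and (ii) the harmless bookkeeping of applying Lemma~\ref{Div} to $T^{-1}$ rather than $T$, so that the translates appear on the left and match the identity $M(v_n,L_n)^T f = (M(0,L_n)^T f)\circ T^{v_n}$.
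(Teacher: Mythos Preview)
Your proposal is correct and follows essentially the same route as the paper: invoke Proposition~\ref{mostmapsleveldiv} to obtain the dense $G_\delta$ class with divergent level-set series, then for each such $T$ apply Lemma~\ref{Div} (to $T^{-1}$) to manufacture $(v_n)$ with $\limsup_n |M(v_n,L_n)^Tf|\ge\delta$ a.e., and use $L^1$-norm convergence of the moving averages to force $\liminf_n |M(v_n,L_n)^Tf|=0$ a.e. Your added remarks on why a single $\delta$ suffices and on the $T^{-1}$ bookkeeping are accurate and make the argument slightly more explicit than the paper's version.
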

\begin{proof}  We use the same argument as in Proposition~\ref{mostmapsleveldiv} to fix $\delta$ and produce the class of maps for which each has
\[\sum\limits_{n=1}^\infty \mu \{|M(0,n)^Tf| \ge \delta\} = \infty.\]
Then we apply Lemma~\ref{Div} to get moving averages with $\limsup\limits_{n\to \infty} |M(v_n,L_n)^Tf| \ge \delta$ a.e.  But the norm convergence shows at the same time that $\liminf\limits_{n\to \infty} |M(v_n,L_n)^Tf| = 0$ a.e.  Hence $f$ is bad for this moving average.
\end{proof}

More is true if the function is not bounded.

\begin{prop}  Fix an increasing $(L_n)$ in $\mathbb Z^+$.  If $f\in L^1(\mu) \backslash L^\infty(\mu)$, there is a dense $G_\delta$ set $\mathcal G$ of maps such that for any $K$, if $T \in \mathcal G$, we have $\sum\limits_{n=1}^\infty \mu \{|M(0,L_n)^T f| \ge K \} = \infty$.   Hence, for any map $T \in \mathcal G$, there is a moving average with $\limsup\limits_{n\to \infty} |M(v_n,L_n)^Tf| = \infty$ a.e. while as usual $\liminf\limits_{n\to \infty} |M(v_n,L_n)^Tf| = 0$ a.e.
\end{prop}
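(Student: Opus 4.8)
The plan is to adapt the proof of Proposition~\ref{mostmapsleveldiv}, keeping track of the divergence level $K$, and then to diagonalize over $K$ when applying the divergent series Lemma~\ref{Div}.

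First, for each $K\in\mathbb Z^+$ set
\[
\mathcal B_K=\Big\{T:\sum_{n=1}^\infty\mu\{|M(0,L_n)^Tf|\ge K\}<\infty\Big\}
=\bigcup_{J=1}^\infty\bigcap_{N=1}^\infty\Big\{T:\sum_{n=1}^N\mu\{|M(0,L_n)^Tf|\ge K\}\le J\Big\}.
\]
Exactly as in Proposition~\ref{mostmapsleveldiv}, each inner set is weakly closed, so $\mathcal B_K$ is an $F_\sigma$ set in $\mathcal T$ and $\mathcal G_K=\mathcal T\setminus\mathcal B_K$ is a $G_\delta$ set. I would then show each $\mathcal G_K$ is dense, and this is where the hypothesis $f\notin L^\infty(\mu)$ enters: for every $K$ we have $\mu\{|f|\ge K\}>0$. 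Given any $T$ and any weak neighborhood $N$ of $T$, choose a subset $E_0\subseteq\{|f|\ge K\}$ of sufficiently small positive measure and, by the standard perturbation used in Proposition~\ref{mostmapsleveldiv}, a measure-preserving $T_0\in N$ which is the identity on $E_0$. Then $M(0,L_n)^{T_0}f=f$ on $E_0$, so $\mu\{|M(0,L_n)^{T_0}f|\ge K\}\ge\mu(E_0)$ for every $n$, the defining series of $\mathcal B_K$ diverges for $T_0$, and $T_0\in\mathcal G_K\cap N$. Hence each $\mathcal G_K$ is a dense $G_\delta$ set, and by the Baire category theorem $\mathcal G=\big(\bigcap_{K=1}^\infty\mathcal G_K\big)\cap\{\text{ergodic maps}\}$ is again a dense $G_\delta$ set, on which $\sum_n\mu\{|M(0,L_n)^Tf|\ge K\}=\infty$ for every $K$.

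Next, fix $T\in\mathcal G$ and build the bad moving average. Since for each $j$ the series $\sum_n\mu\{|M(0,L_n)^Tf|\ge j\}$ diverges, so do its tails, and one can pick $n_1<n_2<\cdots$ with $\sum_{n_j\le n<n_{j+1}}\mu\{|M(0,L_n)^Tf|\ge j\}\ge 1$ for all $j$; set $K_n=j$ for $n_j\le n<n_{j+1}$ and $E_n=\{|M(0,L_n)^Tf|\ge K_n\}$. Then $\sum_n\mu(E_n)=\infty$ while $K_n\to\infty$. Applying Lemma~\ref{Div} to the ergodic map $T^{-1}$ yields an increasing sequence $(v_n)$ in $\mathbb Z^+$ with $x\in T^{-v_n}E_n$ for infinitely many $n$, for a.e.\ $x$. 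Since $M(v_n,L_n)^Tf(x)=M(0,L_n)^Tf(T^{v_n}x)$, this says $|M(v_n,L_n)^Tf(x)|\ge K_n$ infinitely often, and because $K_n\to\infty$ this forces $\limsup_n|M(v_n,L_n)^Tf(x)|=\infty$ for a.e.\ $x$. On the other hand, working as in Section~\ref{basicsdata} with mean-zero $f$ (replacing $f$ by $f-\int_Xf\,d\mu$ if needed), $(M(v_n,L_n)^Tf)$ converges to $0$ in $L^1$-norm, so some subsequence converges to $0$ a.e., giving $\liminf_n|M(v_n,L_n)^Tf|=0$ a.e.; hence $f$ is bad for this moving average.

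The only genuinely new point over Propositions~\ref{mostmapsleveldiv} and~\ref{mostmapsnot} is the block/diagonal choice of the levels $K_n\to\infty$ keeping $\sum_n\mu(E_n)=\infty$, which is what upgrades ``the series diverges for every fixed $K$'' into a single moving average whose absolute values have limsup $\infty$; everything else — the $F_\sigma$/$G_\delta$ bookkeeping, the identity-on-$E_0$ perturbation, the appeal to Baire category, and the norm-convergence argument for $\liminf=0$ — runs exactly as in the earlier results. One should also double-check, as in those proofs, that the perturbation $T_0$ can simultaneously be made measure-preserving and weakly close to $T$, but this is the same standard construction reused throughout the paper.
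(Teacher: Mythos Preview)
Your proof is correct and follows essentially the same approach as the paper's own argument: use the hypothesis $f\notin L^\infty(\mu)$ to get, for every level $K$, a set $\{|f|\ge K\}$ of positive measure, run the Proposition~\ref{mostmapsleveldiv} perturbation at each level to obtain dense $G_\delta$ sets $\mathcal G_K$, intersect over $K$ and with the ergodic maps, and then diagonalize over $K$ in blocks to build a single moving average with $\limsup=\infty$ a.e. The only cosmetic difference is that the paper phrases the block construction directly (choosing $v_n$ on each block $B_K$ so that $\sup_{n\in B_K}|M(v_n,L_n)^Tf|\ge K$ on a set of measure at least $1-1/2^K$), whereas you package the same step as a single application of Lemma~\ref{Div} to the diagonalized sets $E_n=\{|M(0,L_n)^Tf|\ge K_n\}$; both implementations are equivalent.
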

\begin{proof}  For each $K$, there is a set $E_K$ of positive measure on which $|f| \ge K$.  We use these sets as in the proof of Proposition~\ref{mostmapsleveldiv} to show that there for each $K$ there is a dense $G_\delta$ set $\mathcal G_K$ of maps with $\sum\limits_{n=1}^\infty \mu \{|M(0,L_n)^T f| \ge K \} = \infty$.  Let $\mathcal G = \bigcap\limits_{K=1}^\infty \mathcal G_K$.  Now if $T \in \mathcal G$, we can construct the moving averages in disjoint blocks $B_K$ of whole numbers and translates $v_n$ for $n\in B_K$ so that $\sup_{n \in B_K} |M(v_n,L_n)^Tf| \ge K$ on a set of measure at least $1 - \frac 1{2^n}$.   This would give the moving averages with the desired divergence property.
\end{proof}

\begin{rem}   In any case, here is what the results above say about the good functions.  Fix an ergodic map and consider all moving averages that fail the Cone Condition.  Each such bad moving average has only a first category set of good functions associated with it, but still there is a dense class of functions that are good for all these moving averages.  However, if you vary the maps AND the moving averages, there is no non-zero, mean-zero function that can be a good functions for all of these cases.
\qed \end{rem}

\section{\bf Approximation by Coboundaries}\label{approxmethod}

We can also construct good functions for all moving averages with respect to $T$ in terms of how well they are approximated by coboundaries.  First, we generally know that  every function is a series of coboundaries, although this representation is far from uniquely determined.

\begin{prop}\label{allCOBseries} Fix $r$ and $s$ with $1 \le r\le s < \infty$.  For any mean-zero function $f\in L^r(\mu)$, there is a sequence $(h_k)$ in $L^s(\mu)$ such that the series  $\sum\limits_{k=1}^\infty h_k - h_k\circ T$ converges in $L^r$-norm and 
$f = \sum\limits_{k=1}^\infty h_k - h_k\circ T$.
\end{prop}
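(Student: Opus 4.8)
The plan is to build the functions $h_k$ one at a time so that the partial sums $\sum_{k=1}^K h_k - h_k\circ T$ approximate $f$ geometrically fast in $L^r$-norm. The mechanism is the fact already used repeatedly in the paper: for an ergodic $T$, the set of $T$-coboundaries with transfer function in $L^s(\mu)$ is $L^r$-norm dense in the mean-zero subspace of $L^r(\mu)$ (since $L^\infty$-coboundaries alone are $L^r$-dense there, and $L^\infty \subseteq L^s$). Granting this, the construction is a routine telescoping argument, so the real content is locating the density statement and checking it survives the $r \le s$ bookkeeping.

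First I would set $f_0 = f$ and, having produced mean-zero $f_{k-1} \in L^r(\mu)$, choose (by the density statement) a function $h_k \in L^s(\mu)$ with $\|f_{k-1} - (h_k - h_k\circ T)\|_r < 2^{-k}$, and put $f_k = f_{k-1} - (h_k - h_k\circ T)$, which is again mean-zero and in $L^r(\mu)$. Then $f_K = f - \sum_{k=1}^K (h_k - h_k\circ T)$ has $\|f_K\|_r < 2^{-K} \to 0$, so the series $\sum_{k=1}^\infty (h_k - h_k\circ T)$ converges in $L^r$-norm to $f$. That is exactly the assertion, with $(h_k)$ a sequence in $L^s(\mu)$ as required.

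One detail worth stating explicitly: one should observe that one may assume $T$ ergodic, or else invoke whichever ergodic decomposition / representative is standard in this paper — but in fact no ergodicity is needed if one instead uses that the $L^\infty$-coboundaries are $L^r$-dense in the closure of $\{g - g\circ T : g \in L^\infty\}$, and $f$ mean-zero lies in that closure precisely when $f$ is orthogonal to the $T$-invariants; the paper's standing hypotheses put us in the ergodic (or at least mean-ergodic) setting, so I would simply cite the density of $L^\infty$-coboundaries in the mean-zero functions as recalled in Section~\ref{IntroMovAve}.

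The main obstacle is not analytic but a matter of having the right density lemma on hand with the transfer function in the prescribed space $L^s(\mu)$: one needs that the approximating coboundary $h_k - h_k\circ T$ can be taken with $h_k$ bounded (hence in $L^s$), not merely with $h_k \in L^r$. This is standard — bounded coboundaries are dense — but it is the one step that must be pinned down rather than waved through; everything after it is the geometric telescoping above.
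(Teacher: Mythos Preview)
Your proposal is correct and is essentially the same argument as the paper's: both rest entirely on the density of $L^s$-coboundaries (indeed $L^\infty$-coboundaries) in the mean-zero functions of $L^r(\mu)$, after which the construction is the obvious inductive telescoping you spell out. The paper's proof is a one-line appeal to this density (``for any $\epsilon>0$ there exists $h\in L^s(\mu)$ with $\|f-(h-h\circ T)\|_r\le\epsilon$''), leaving the geometric-rate recursion implicit, so your write-up is just a more explicit version of the same proof.
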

\begin{proof} This follows directly because the ergodicity of $T$ shows that for any $\epsilon > 0$, there exists $h \in L^s(\mu)$ with $\|f - (h - h\circ T)\|_r \le \epsilon$. 
\end{proof}

However, if $f$ is not a $T$-coboundary and one approximates $f$ in $L^1$-norm by a coboundary $h - h\circ T$ with $h \in L^1(\mu)$, then the better the approximation, then one must allow that the norm $\|h\|_1$ becomes larger.  But we can give examples where we can control the rate of approximating to $f$ by coboundaries, with transfer functions not growing in norm too fast, and get functions good for all $T$ for moving averages. This is easier in $L^p(\mu), 1 < p < \infty$, and harder in $L^1(\mu)$.   

We can formalize this balance between the approximation by coboundaries and the norm of the transfer function, and connect it with the rate of convergence to zero of the ergodic averages.  Here are the two relevant results, which are surely well-known. We have this rate of convergence over estimate.

\begin{prop}\label{rateover}
 Take $f \in L^r(\mu)$
and $H \in L^r(\mu)$.  Then
\[\|A_n^T f\|_r \le \|A_n^T(f - (H-H\circ T))\|_r + 2\|H\|_r/n.\]
So
\[\|A_n^T f\|_r\le \inf_{\|H\|_r\le L}\|f - (H-H\circ T)\|_r + 2L/n.\]
\end{prop}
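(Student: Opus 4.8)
The plan is to prove the two inequalities in turn, with the second being an immediate consequence of the first after taking an infimum. Throughout, write $A_n^T f = \frac{1}{n}\sum_{k=1}^n f\circ T^k$ for the $n$-th ergodic average, noting that $A_n^T$ is a contraction on $L^r(\mu)$ since $T$ is measure-preserving and $\|g\circ T\|_r = \|g\|_r$.

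First I would establish the key identity that the ergodic average of a coboundary telescopes: for $H\in L^r(\mu)$ one has
\[
A_n^T(H - H\circ T) = \frac{1}{n}\sum_{k=1}^n (H\circ T^k - H\circ T^{k+1}) = \frac{1}{n}\big(H\circ T - H\circ T^{n+1}\big).
\]
Taking $L^r$-norms and using the triangle inequality together with $\|H\circ T\|_r = \|H\circ T^{n+1}\|_r = \|H\|_r$ yields
\[
\|A_n^T(H - H\circ T)\|_r \le \frac{2\|H\|_r}{n}.
\]
Next, by linearity of $A_n^T$ we write $f = \big(f - (H - H\circ T)\big) + (H - H\circ T)$, so that
\[
\|A_n^T f\|_r \le \big\|A_n^T\big(f - (H - H\circ T)\big)\big\|_r + \|A_n^T(H - H\circ T)\|_r \le \big\|A_n^T\big(f - (H - H\circ T)\big)\big\|_r + \frac{2\|H\|_r}{n},
\]
which is the first displayed inequality.

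For the second inequality, I would bound $\big\|A_n^T\big(f - (H-H\circ T)\big)\big\|_r \le \big\|f - (H - H\circ T)\big\|_r$ using that $A_n^T$ is a contraction on $L^r(\mu)$. Combined with the first inequality this gives, for every $H$ with $\|H\|_r \le L$,
\[
\|A_n^T f\|_r \le \big\|f - (H - H\circ T)\big\|_r + \frac{2L}{n}.
\]
Taking the infimum over all such $H$ on the right-hand side produces exactly $\inf_{\|H\|_r \le L}\|f - (H - H\circ T)\|_r + 2L/n$, completing the proof.

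There is really no serious obstacle here: the argument is entirely formal, resting on the telescoping identity for coboundaries and the contraction property of ergodic averages. The only point requiring any care is the bookkeeping in passing from a fixed $H$ to the infimum — one must observe that the term $2L/n$ does not depend on the particular $H$ (only on the norm bound $L$), so it can be pulled outside the infimum cleanly; this is immediate once the inequality is written for an arbitrary $H$ with $\|H\|_r\le L$.
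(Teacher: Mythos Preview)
Your proof is correct and is exactly the standard argument one would expect: telescope the coboundary, apply the triangle inequality, use that $A_n^T$ is an $L^r$-contraction, and take the infimum. The paper in fact states this proposition without proof, labeling it ``surely well-known,'' so your write-up supplies precisely the missing routine verification.
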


\noindent Hence, with $T$ ergodic, the approximation of mean-zero $f$ by
coboundaries bounds the rate that $\|A_n^T f\|_r$ goes to zero.  But in addition,  there is this rate of convergence under estimate.

\begin{prop}\label{rateunder} Given $f\in L^r(\mu)$, we can write $f - A_n^T f$ as the coboundary $H - H \circ T$ with $H = \frac
1n\sum\limits_{k=1}^n S_{k-1}^T f$.  So $\|H\|_r\le n\|f\|_r$ and
\[ \|A_n^T f\|_r \ge  \inf_{\|H\|_r\le n\|f\|_r}\|f - (H-H\circ T)\|_r.\]
\end{prop}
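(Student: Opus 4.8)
The plan is to verify the identity $f - A_n^T f = H - H\circ T$ directly, and then read off both displayed inequalities as easy consequences. Throughout, $A_n^T f = \frac{1}{n}\sum_{k=1}^n f\circ T^k$ and $S_m^T f = \sum_{k=1}^m f\circ T^k$ denote the ergodic averages and partial sums (so $A_0^T f = 0$, $S_0^T f = 0$), and $H = \frac{1}{n}\sum_{k=1}^n S_{k-1}^T f$.

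\medskip

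\noindent\emph{Step 1: The telescoping identity.} First I would compute $H - H\circ T = \frac{1}{n}\sum_{k=1}^n \big(S_{k-1}^T f - S_{k-1}^T f \circ T\big)$. Using $S_{k-1}^T f \circ T = \sum_{j=1}^{k-1} f\circ T^{j+1} = \sum_{i=2}^{k} f\circ T^{i} = S_k^T f - f\circ T$, one gets $S_{k-1}^T f - S_{k-1}^T f\circ T = (S_{k-1}^T f - S_k^T f) + f\circ T = f\circ T - f\circ T^k$. Summing over $k = 1,\dots,n$ gives $H - H\circ T = \frac{1}{n}\sum_{k=1}^n (f\circ T^k - f\circ T^k) $... wait, more carefully: $\frac{1}{n}\sum_{k=1}^n (f\circ T - f\circ T^k) = f\circ T - A_n^T f$. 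Hmm — this produces $f\circ T - A_n^T f$ rather than $f - A_n^T f$. The cleanest fix is to note the Proposition is using the convention $A_n^T f = \frac1n\sum_{k=1}^n f\circ T^k$ but one should instead pair with partial sums indexed to make the telescoping land on $f$; alternatively, since $f\circ T - A_n^T f$ and $f - A_n^T f$ differ by the coboundary $f\circ T - f = -(f - f\circ T)$, one simply absorbs that term by redefining $H$ as $\frac1n\sum_{k=1}^n S_{k-1}^T f$ with $S_m^T f = \sum_{k=0}^{m-1} f\circ T^k$ (partial sums starting at $f\circ T^0 = f$). With that indexing convention the same telescoping yields exactly $H - H\circ T = f - A_n^T f$. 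I would state the identity, pick whichever partial-sum convention makes it exact, and verify it in two lines.

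\medskip

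\noindent\emph{Step 2: The norm bound on $H$.} By the triangle inequality and $T$-invariance of the $L^r$-norm (as $T$ is measure-preserving), $\|S_{k-1}^T f\|_r \le \sum \|f\circ T^j\|_r = (k-1)\|f\|_r \le (n-1)\|f\|_r < n\|f\|_r$ for $1\le k\le n$, hence $\|H\|_r \le \frac1n\sum_{k=1}^n \|S_{k-1}^T f\|_r \le n\|f\|_r$ (with room to spare, but $n\|f\|_r$ is the clean bound stated).

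\medskip

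\noindent\emph{Step 3: The under-estimate.} Since $f - A_n^T f = H - H\circ T$ with $\|H\|_r \le n\|f\|_r$, we have $f - (H - H\circ T) = A_n^T f$, so the particular transfer function $H$ witnesses that the infimum $\inf_{\|H'\|_r \le n\|f\|_r} \|f - (H' - H'\circ T)\|_r$ is at most $\|A_n^T f\|_r$; that is, $\|A_n^T f\|_r \ge \inf_{\|H'\|_r\le n\|f\|_r}\|f - (H'-H'\circ T)\|_r$, which is exactly the claimed inequality. (Proposition~\ref{rateover} supplies the matching over-estimate, so the two together pin down $\|A_n^T f\|_r$ between these infima.)

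\medskip

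\noindent The only genuine subtlety — and the step I expect to need the most care — is Step 1: getting the telescoping sum to land precisely on $f - A_n^T f$ rather than $f\circ T - A_n^T f$, which just amounts to fixing the convention for the partial sums $S_{k-1}^T f$ consistently with the convention for $A_n^T f$ used elsewhere in the paper. Everything else is a one-line triangle-inequality estimate plus the trivial observation that an explicit competitor bounds an infimum.
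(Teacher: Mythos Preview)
Your approach is correct and is exactly how one proves this; the paper in fact states the proposition without proof (calling it ``surely well-known''), so there is nothing to compare against beyond checking your argument on its own merits.

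The indexing subtlety you flag in Step~1 is real: with the paper's later convention $S_m^T f = \sum_{k=1}^m f\circ T^k$ and $A_n^T f = \frac1n\sum_{k=1}^n f\circ T^k$, the computation gives $H - H\circ T = f\circ T - A_n^T f$ rather than $f - A_n^T f$. Your fix is fine --- either switch to $S_m^T f = \sum_{k=0}^{m-1} f\circ T^k$, or (cleanest, if you want to preserve the paper's $S_m^T$) note that $f - A_n^T f = (f + H) - (f + H)\circ T$, so the transfer function is $H' = f + H$, which still satisfies $\|H'\|_r \le \|f\|_r + \frac{n-1}{2}\|f\|_r \le n\|f\|_r$. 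Either way Steps~2 and~3 go through unchanged. For a final write-up, just pick one convention, state the identity cleanly, and drop the exploratory ``wait'' and ``Hmm'' passages.
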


\noindent So, with $T$ ergodic, the rate that $\|A_n^T f\|_r$ goes to zero bounds the rate of
approximation of $f$ by coboundaries.

\begin{rem}\label{rates}
The well-known basic theorem is that for any $\epsilon_n \to 0$, no matter how slowly, and any sequence $(m_n)$ and any $r, 1 \le r \le \infty$, there is a dense
$G_\delta$ set of functions $f\in L^r(\mu)$ such that
\[\limsup\limits_{n\to \infty}(1/ \epsilon_n)\|A_{m_n}^Tf\|_r = \infty.\]
It is a good question to ask for which functions one has actually
\[\lim\limits_{n\to \infty} (1/ \epsilon_n)\|A_{m_n}^Tf\|_r = \infty.\]
Of course, one can also ask for the nature of the functions $f$ such that
\[\liminf\limits_{n\to \infty} (1/\epsilon_n)\|A_{m_n}^Tf\|_r = 0 \quad \text {or}\quad \lim\limits_{n\to \infty} (1/\epsilon_n)\|A_{m_n}^Tf\|_r = 0.\]
If $f$ is a coboundary with transfer function in $L^1(\mu)$ and $1/\epsilon_n =
o(m_n)$, then these both hold.  See Rosenblatt~\cite{Rosenblatt} for
many results related to the (optimal) speed of norm convergence.
\qed \end{rem}

\subsection{\bf Some constructions}\label{EGS}

Suppose we have $f\in L^1(\mu)$ and a sequence $(g_n)$ of functions in $L^2(\mu)$.  Suppose we have two conditions:
\begin{equation}\label{one} 
\sum\limits_{n=1}^\infty \|f - (g_n - g_n\circ T)\|_1 < \infty.
\end{equation}
and also  
\begin{equation}\label{two} 
\sum\limits_{n=1}^\infty \|g_n\|_2^2/n^2 < \infty.
\end{equation}

At least formally, one hopes that Equation~\ref{two} allows sufficient growth in the norms $\|g_n\|_1$ such that the degree of the approximation in Equation~\ref{one} is possible.  Below will be some examples that exhibit proof of concept for these two equations to hold simultaneously.

\begin{prop}\label{closest} If $f\in L^1(\mu)$ and there is a sequence $(g_n)$ of functions in $L^2(\mu)$ such that Equation~\ref{one} and Equation~\ref{two} hold, then $f$ is good for all moving averages with respect to $T$.
\end{prop}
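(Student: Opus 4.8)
The plan is to reduce everything to the complete-convergence criterion of Proposition~\ref{charactergood}. First note that Equation~\ref{one} already forces $f$ to be mean-zero, since each $g_n-g_n\circ T$ has integral zero and $f$ is an $L^1(\mu)$-limit of these coboundaries as $n\to\infty$; so it is legitimate to invoke Proposition~\ref{charactergood}, and it suffices to prove that for every $\delta>0$
\[
\sum_{n=1}^\infty \mu\{|M(0,n)^Tf|\ge\delta\}<\infty .
\]

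For each fixed $n$ I would write $f=(g_n-g_n\circ T)+r_n$ with $r_n=f-(g_n-g_n\circ T)$, so that
\[
M(0,n)^Tf=M(0,n)^T(g_n-g_n\circ T)+M(0,n)^Tr_n ,
\]
and hence $\mu\{|M(0,n)^Tf|\ge\delta\}\le \mu\{|M(0,n)^T(g_n-g_n\circ T)|\ge\delta/2\}+\mu\{|M(0,n)^Tr_n|\ge\delta/2\}$. The remainder term is immediate: $M(0,n)^T$ is an $L^1(\mu)$-contraction (an average of isometries), so by Chebyshev $\mu\{|M(0,n)^Tr_n|\ge\delta/2\}\le (2/\delta)\|r_n\|_1$, and summing over $n$ this is finite by Equation~\ref{one}. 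For the coboundary term I would telescope, $M(0,n)^T(g_n-g_n\circ T)=\tfrac1n\big(g_n\circ T-g_n\circ T^{n+1}\big)$, and then use that $T$ preserves $\mu$:
\[
\mu\{|M(0,n)^T(g_n-g_n\circ T)|\ge\delta/2\}\le \mu\{|g_n\circ T|\ge n\delta/4\}+\mu\{|g_n\circ T^{n+1}|\ge n\delta/4\}=2\,\mu\{|g_n|\ge n\delta/4\}.
\]
Applying Chebyshev's inequality in $L^2$ gives $\mu\{|g_n|\ge n\delta/4\}\le (16/\delta^2)\|g_n\|_2^2/n^2$, which is summable in $n$ precisely by Equation~\ref{two}. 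Combining the two estimates yields $\sum_n\mu\{|M(0,n)^Tf|\ge\delta\}<\infty$ for every $\delta>0$, and Proposition~\ref{charactergood} then gives that $f$ is good for all moving averages with respect to $T$.

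I do not expect a genuine obstacle here; the content of the argument is the matching of norms. The transfer functions $g_n$ are measured in $L^2(\mu)$ so that the quadratic Chebyshev bound produces exactly the summand $\|g_n\|_2^2/n^2$ appearing in Equation~\ref{two}, while the approximation defect $r_n$ is measured in $L^1(\mu)$, which is all that $M(0,n)^T$ leaves invariant in size. The one point to state carefully is that the index $n$ plays a dual role as both the averaging length and the approximation level, so that the telescoped transfer term carries the factor $1/n$ that is exactly what Equation~\ref{two} is designed to absorb; beyond that the computation is routine.
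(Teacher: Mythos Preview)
Your proof is correct and follows essentially the same route as the paper: split $M(0,n)^Tf$ into the coboundary piece $M(0,n)^T(g_n-g_n\circ T)=\tfrac1n(g_n\circ T-g_n\circ T^{n+1})$ controlled via $L^2$-Chebyshev and Equation~\ref{two}, and the remainder $M(0,n)^Tr_n$ controlled via $L^1$-Chebyshev and Equation~\ref{one}, then conclude by the complete-convergence criterion. Your explicit remark that Equation~\ref{one} forces $f$ to be mean-zero (so that Proposition~\ref{charactergood} applies) is a useful addition that the paper leaves implicit.
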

\begin{proof}  We need to show that for all $\delta > 0$, the series
$\sum\limits_{n=1}^\infty \mu(\{|A_n^Tf| \ge \delta \}) < \infty$.  It suffices to show that the following two series converge:

\[\sum\limits_{n=1}^\infty \mu(\{|A_n^T (f - (g_n -g_n\circ T)|\ge \delta/2\})\]
\[\sum\limits_{n=1}^\infty \mu(\{|A_n^T (g_n - g_n\circ T)|\ge \delta/2\})\]

The first series is bounded by $\sum\limits_{n=1}^\infty (2/\delta)
\|f - (g_n - g_n\circ T)\|_1$, which is convergent by Equation~\ref{one}.
The second series also converges.  First, $A_n^T ( g_n - g_n\circ T) = 
\frac 1n (g_n\circ T - g_n \circ T^{n+1})$.  So, the usual Chebychev inequality, $\mu(\{|A_n^T (g_n - g_n\circ T)|\ge \delta/2\}) \le
(16/\delta)\|g_n\|_2^2$.  Hence, Equation~\ref{two} shows that
$\sum\limits_{n=1}^\infty \mu(\{|A_n^T (g_n - g_n\circ T)|\ge \delta/2\})$ converges.  
\end{proof}

We can get many variations on this result by replacing $L^1(\mu)$ by $L^r(\mu)$ for some $r \ge 1$, and $L^2(\mu)$ by $L^s(\mu)$ with $s\ge r$.  For example, we have
the following result that uses this equation:

\begin{equation}\label{three}
\sum\limits_{n=1}^\infty \|f - (g_n - g_n\circ T)\|_2^2 < \infty.
\end{equation}

\begin{prop}\label{closer} If $f\in L^1(\mu)$ and there is a sequence $(g_n)$ of functions in $L^2(\mu)$ such that Equation~\ref{two} and Equation~\ref{three} hold, then $f$ is good for all \ averages.
\end{prop}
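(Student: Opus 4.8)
The plan is to follow the proof of Proposition~\ref{closest} almost verbatim, with the single change that the $L^1$-Chebyshev estimate used there on the approximation term is replaced by an $L^2$-Chebyshev estimate. Write $A_n^T = M(0,n)^T$. First I would note that the hypotheses already pin $f$ down more than the statement advertises: since $\sum_{n=1}^\infty \|f-(g_n-g_n\circ T)\|_2^2<\infty$, each $f-(g_n-g_n\circ T)$ lies in $L^2(\mu)$, and each $g_n-g_n\circ T$ does too, so $f\in L^2(\mu)\subseteq L^1(\mu)$; moreover $\|f-(g_n-g_n\circ T)\|_2\to 0$ and every coboundary $g_n-g_n\circ T$ is mean-zero, so $\int_X f\,d\mu=0$. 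Thus $f$ is a mean-zero $L^2$-function, and by Proposition~\ref{charactergood} it suffices to prove that for every $\delta>0$,
\[
\sum_{n=1}^\infty \mu\big(\{|A_n^T f|\ge\delta\}\big)<\infty .
\]

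For this I would split $A_n^T f = A_n^T\big(f-(g_n-g_n\circ T)\big)+A_n^T(g_n-g_n\circ T)$ and bound the two resulting series separately. For the first series, $A_n^T$ is a contraction on $L^2(\mu)$ (being an average of the $L^2$-isometries $g\mapsto g\circ T^k$), so the $L^2$-Chebyshev inequality bounds the $n$-th term by $(4/\delta^2)\|A_n^T(f-(g_n-g_n\circ T))\|_2^2\le(4/\delta^2)\|f-(g_n-g_n\circ T)\|_2^2$, which is summable by Equation~\ref{three}. For the second series, I would telescope exactly as in Proposition~\ref{closest}: $A_n^T(g_n-g_n\circ T)=\frac1n(g_n\circ T-g_n\circ T^{n+1})$, whose $L^2$-norm is at most $(2/n)\|g_n\|_2$, so the $L^2$-Chebyshev inequality bounds the $n$-th term by $(16/\delta^2)\|g_n\|_2^2/n^2$, summable by Equation~\ref{two}. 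Adding the two bounds finishes the argument.

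There is no genuine obstacle here; the proof is a short bookkeeping exercise. The one point worth stressing is that it is precisely the $L^2$ (rather than merely $L^1$) control of the approximation error in Equation~\ref{three} that makes the Chebyshev bound on the first series summable with no further hypothesis — in Proposition~\ref{closest} one instead had Equation~\ref{one} in $L^1$ together with the $L^1$-Chebyshev inequality. Finally I would remark that the same scheme yields a whole family of such criteria: measuring the approximation error in $L^r(\mu)$, $r\ge1$, and taking the $g_n$ in $L^s(\mu)$, $s\ge r$, one replaces the $L^2$-norms in Equations~\ref{two} and~\ref{three} by the appropriate $L^s$- and $L^r$-norms and applies the $L^r$-Chebyshev inequality, with Propositions~\ref{closest} and~\ref{closer} being two instances.
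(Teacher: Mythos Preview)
Your proposal is correct and follows essentially the same route as the paper: split $A_n^T f$ into the approximation term and the coboundary term, handle the first with the $L^2$-Chebyshev inequality via Equation~\ref{three} (this is the only change from Proposition~\ref{closest}), and handle the second by telescoping and Equation~\ref{two}. Your added observation that the hypotheses already force $f\in L^2(\mu)$ with mean zero, and your explicit invocation of the $L^2$-contraction property of $A_n^T$, are helpful clarifications that the paper leaves implicit.
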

\begin{proof}  The argument is similar except that we show that
$\sum\limits_{n=1}^\infty \mu(\{|A_n^T (f - (g_n -g_n\circ T)|\ge \delta/2\})$ converges by over-estimating it by 
$\sum\limits_{n=1}^\infty 
(4/\delta^2)\|A_n^T (f - (g_n -g_n\circ T)\|_2^2$.
\end{proof}

We can create functions $f$ which satisfying conditions such as in 
Proposition~\ref{closest} or Proposition~\ref{closer} by taking series of coboundaries.  We assume at least that $(h_k)$ is a sequence in $L^1(\mu)$ 
and $f = \sum\limits_{k=1}^\infty h_k - h_k\circ T$ converges in $L^1$-norm.   We then write $f = g_n - g_n\circ T + t_n$ where
$g_n = \sum\limits_{k=1}^n h_k$ and $t_n = \sum\limits_{k=n+1}^\infty h_k - h_k\circ T$.  If we assume that $h_k \in L^2(\mu)$ for all $k$, then
Proposition~\ref{closest} and Proposition~\ref{closer} can give results of which the following is just one simple version.

\begin{prop}\label{seriesbasic} Suppose $(h_k)$ is a sequence in $L^2(\mu)$
such that 
\[\|h_k\|_2 = O(1/k^{3/4})\] 
and 
\[\sum\limits_{k=1}^\infty k\|h_k - h_k\circ T\|_1 < \infty.\]
Then the series $f = \sum\limits_{k=1}^\infty h_k - h_k\circ T$ gives a 
$L^1$-norm convergent series such that $f$ is good for all moving averages with respect to $T$.  
\end{prop}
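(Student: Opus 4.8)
The plan is to deduce the statement directly from Proposition~\ref{closest}, applied with the choice $g_n = \sum_{k=1}^n h_k$. Since each $h_k \in L^2(\mu)$ and $g_n$ is a finite sum, $g_n \in L^2(\mu)$, so it remains to check three things: that $f = \sum_{k=1}^\infty h_k - h_k\circ T$ actually converges in $L^1$-norm (so that $f \in L^1(\mu)$ is well defined), that Equation~\ref{one} holds for this sequence $(g_n)$, and that Equation~\ref{two} holds.

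First I would dispose of the $L^1$-convergence and Equation~\ref{one} together, using only the second hypothesis $\sum_{k=1}^\infty k\|h_k - h_k\circ T\|_1 < \infty$. Since $k \ge 1$, this already forces $\sum_{k=1}^\infty \|h_k - h_k\circ T\|_1 < \infty$, so the series defining $f$ converges absolutely in $L^1$-norm. Writing $t_n = f - (g_n - g_n\circ T) = \sum_{k=n+1}^\infty (h_k - h_k\circ T)$, the triangle inequality gives $\|t_n\|_1 \le \sum_{k=n+1}^\infty \|h_k - h_k\circ T\|_1$; summing over $n \ge 1$ and interchanging the order of summation (all terms are nonnegative) yields $\sum_{n=1}^\infty \|t_n\|_1 \le \sum_{k=2}^\infty (k-1)\|h_k - h_k\circ T\|_1 \le \sum_{k=1}^\infty k\|h_k - h_k\circ T\|_1 < \infty$, which is precisely Equation~\ref{one}.

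Next I would verify Equation~\ref{two} from the growth hypothesis on the $L^2$-norms. From $\|h_k\|_2 = O(1/k^{3/4})$ and the triangle inequality, $\|g_n\|_2 \le \sum_{k=1}^n \|h_k\|_2 = O\big(\sum_{k=1}^n k^{-3/4}\big) = O(n^{1/4})$ by the usual integral comparison, hence $\|g_n\|_2^2 = O(n^{1/2})$ and $\|g_n\|_2^2/n^2 = O(n^{-3/2})$, which is summable. This gives Equation~\ref{two}, and Proposition~\ref{closest} then yields that $f$ is good for all moving averages with respect to $T$.

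I do not expect a serious obstacle here: the entire content is that the two rate hypotheses are exactly compatible with the two summability conditions of Proposition~\ref{closest}. The only point demanding a little care is the bookkeeping in the tail estimate — one needs the weight $k$ (rather than something larger) on $\|h_k - h_k\circ T\|_1$ so that the sum of the tails $\|t_n\|_1$ over $n$ reproduces the hypothesis — together with the matching observation that the exponent $3/4$ (indeed any exponent strictly larger than $1/2$) is what makes $\sum_n \|g_n\|_2^2/n^2$ converge. Once these two estimates are recorded, the proposition follows at once.
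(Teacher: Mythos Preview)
Your proposal is correct and follows essentially the same argument as the paper: set $g_n=\sum_{k=1}^n h_k$, use $\|h_k\|_2=O(k^{-3/4})$ to get $\|g_n\|_2=O(n^{1/4})$ and hence Equation~\ref{two}, and use the tail bound $\sum_n\|t_n\|_1\le\sum_{k\ge 2}(k-1)\|h_k-h_k\circ T\|_1$ for Equation~\ref{one}, then invoke Proposition~\ref{closest}. Your write-up is slightly more explicit about the $L^1$-convergence of $f$ and the interchange of sums, but the route is the same.
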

\begin{proof}  With $g_n = \sum\limits_{k=1}^n h_k$, we have
$\|g_n\|_2 = O(n^{1/4})$, so Equation~\ref{two} holds.  But also we have
$\sum\limits_{n=1}^\infty \|f - (g_n - g_n\circ T)\|_1 \le
\sum\limits_{n=1}^\infty \|t_n\|_1 = \sum\limits_{n=1}^\infty 
\|\sum\limits_{k=n+1}^\infty h_k - h_k\circ T\|_1$.  So Equation~\ref{one} holds.  That is,  $\sum\limits_{n=1}^\infty \|f - (g_n - g_n\circ T)\|_1  <\infty $.  This is because \[\sum\limits_{n=1}^\infty 
\|\sum\limits_{k=n+1}^\infty h_k - h_k\circ T\|_1
\le \sum\limits_{k=2}^\infty (k-1)\|h_k - h_k \circ T\|_1\]
and we are assuming $\sum\limits_{k=1}^\infty k\|h_k - h_k\circ T\|_1 < \infty$.
\end{proof}

Clearly there are many improved versions of the estimates in Proposition~\ref{seriesbasic} which would give a wider class of examples of series $f= \sum\limits_{k=1}^\infty h_k - h_k\circ T$ which are convergent and give functions $f$ that are good for all moving averages with respect to $T$.

For this type of construction to be really useful, we would want to know that we can also arrange that the series $f=  \sum\limits_{k=1}^\infty h_k - h_k\circ T$ is not a $T$-coboundary itself.  However, it is not very clear how to guarantee that $f$ is not a $T$-coboundary in terms of $(h_k)$.  Indeed, there is this companaion result to Proposition~\ref{allCOBseries}.

\begin{prop}\label{divCOBseries}  Given any mean-zero $f\in L^1(\mu)$, there is a sequence $(h_k)$ in $L^1(\mu)$ such that $\sum\limits_{k=1}^\infty h_k$ does not converge in $L^1$-norm and yet $ \sum\limits_{k=1}^\infty h_k - h_k\circ T$ converges in $L^1$-norm, and $f = \sum\limits_{k=1}^\infty h_k - h_k\circ T$. 
\end{prop}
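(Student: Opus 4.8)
The plan is to take the series‑of‑coboundaries representation of $f$ furnished by Proposition~\ref{allCOBseries} and then \emph{pad} it with extra terms that are invisible to the coboundary series but make the partial sums of the transfer functions oscillate. Concretely, first apply Proposition~\ref{allCOBseries} with $r=s=1$ to the given mean‑zero $f\in L^1(\mu)$ to get a sequence $(a_k)$ in $L^1(\mu)$ with $\sum_{k=1}^\infty (a_k-a_k\circ T)=f$ converging in $L^1$‑norm; set $A_n=\sum_{k=1}^n a_k$ and $r_n = f-(A_n-A_n\circ T)$, so $\|r_n\|_1\to 0$. The padding I would use is the constant function $\un$, inserted in consecutive $+\un,-\un$ pairs: define $h_{3j-2}=a_j$, $h_{3j-1}=\un$, $h_{3j}=-\un$ for $j\ge 1$. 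Each $h_k$ lies in $L^1(\mu)$.

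Next I would verify the two required properties. Since $\un$ is $T$‑invariant, $\un-\un\circ T=0$, so the inserted terms contribute nothing to the coboundary series: for every $N$ the partial sum $\sum_{k=1}^N(h_k-h_k\circ T)$ equals $A_{m}-A_{m}\circ T$ for the appropriate index $m=m(N)$, and $m(N)\to\infty$ as $N\to\infty$, whence $\sum_{k=1}^\infty(h_k-h_k\circ T)=f$ converges in $L^1$‑norm. For the transfer functions, the partial sums $\sum_{k=1}^N h_k$ run through the values $A_j$, then $A_j+\un$, then $A_j$ again, as $N$ increases; in particular two \emph{consecutive} partial sums differ in $L^1$‑norm by $\|\un\|_1=1$ for infinitely many $N$. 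Hence $\bigl(\sum_{k=1}^N h_k\bigr)_N$ is not Cauchy, and since $L^1(\mu)$ is complete, $\sum_{k=1}^\infty h_k$ does not converge in $L^1$‑norm. This is exactly the assertion.

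I do not expect a genuine obstacle here; the only content is the (standard) observation that a coboundary representation is wildly non‑unique and can be perturbed by a series that telescopes to $0$ \emph{at the level of coboundaries} — which is what the pair $\un-\un\circ T,\ -\un+\un\circ T$ does, being identically $0$ — while the corresponding transfer‑function partial sums swing by a fixed amount. The remaining work is purely bookkeeping the interleaving indices. If one preferred the $h_k$ to avoid constants, the same scheme works with $\un$ replaced by a sequence of almost‑invariant functions $w_j$ with $\|w_j\|_1=1$ and $\|w_j-w_j\circ T\|_1\to 0$ (obtained from Rokhlin towers, using that an ergodic $T$ on a non‑atomic space is aperiodic); the coboundary partial sums then converge to $f$ up to an additional error tending to $0$, and the transfer‑function partial sums still fail to be Cauchy.
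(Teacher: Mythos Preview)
Your argument is correct and is genuinely different from the paper's. The paper builds the sequence $(h_k)$ inductively: at each step it approximates the current remainder by a coboundary whose transfer function has $\|h_{K+1}\|_1\ge K+1$, which is possible because one may first approximate the remainder by a function $g$ that is \emph{not} a $T$-coboundary, and for such $g$ the transfer functions of coboundary approximants necessarily have unbounded norm. Thus the paper obtains the stronger conclusion $\|h_k\|_1\to\infty$ (so the terms themselves do not go to zero), at the cost of invoking the non-coboundary/unbounded-transfer-function phenomenon. Your padding trick with $\un,-\un$ is more elementary: it uses only that constants are $T$-invariant, so the inserted pairs vanish at the coboundary level while forcing the transfer-function partial sums to oscillate by $1$ in $L^1$-norm. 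This yields non-Cauchy partial sums, which is exactly what the proposition asks for, though not the stronger $\|h_k\|_1\to\infty$. (A minor remark: the failure of convergence follows from the partial sums not being Cauchy; completeness of $L^1(\mu)$ is not needed for that direction.)
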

\begin{proof}   Suppose we have inductively chosen 
$(h_k: 1\le k \le K)$ with $\|h_k\|_1 \ge k$ for all $k, 1 \le k \le K$, and such that
$\|f - \sum\limits_{k=1}^K h_k - h_k\circ T\|_1 \le 1/2^K$.  Choose $g\in L^1(\mu)$ that is not a $T$-coboundary with $\|f - (\sum\limits_{k=1}^K h_k - h_k\circ T) - g\|_1 \le \frac 12(1/2^{K+1})$.
Then because $g$ is not a $T$-coboundary, 
we can choose $h_{K+1}\in L^1(\mu)$ with $\|h_{K+1}\|_1 \ge K+1$ such that 
$\|g - (h_{K+1}- h_{K+1}\circ T)\|_1 \le \frac 12(1/2^{K+1})$.  This gives
$\|f - \sum\limits_{k=1}^{K+1} h_k - h_k\circ T\|_1 \le 1/2^{K+1}$ and $\|h_k\|_1 \ge k$ for all
$k=1,\dots,K+1$.  This induction gives a sequence $(h_k)$ in $L^1(\mu)$ such that the series
$\sum\limits_{k=1}^\infty h_k - h_k\circ T$ converges to $f$ in $L^1$-norm.  But since $\|h_k\|_1$ tends to
infinity, the series $\sum\limits_{k=1}^\infty h_k$ cannot converge in $L^1$-norm.
\end{proof}

In Proposition~\ref{divCOBseries}, it does not matter if $f$ is a $T$-coboundary or not.  So this result shows that just because $\sum\limits_{k=1}^\infty h_k$ is not norm convergent, while
$f = \sum\limits_{k=1}^\infty h_k - h_k\circ T$ is norm convergent, we cannot conclude that $f$ is not a
$T$-coboundary with transfer function in $L^1(\mu)$.  In the following example, we work around this issue 
to give such a construction with an appropriate choice of $(h_k)$ for which indeed $f$ is not a $T$-coboundary.  
\medskip

\noindent {\bf Example}:  Here is an example of a series of coboundaries that by the results above is good for moving averages with respect to $T$, but  is not itself a $T$-coboundary with a transfer function in $L^1(\mu)$.  We take $a_k =1/k^{1/2}$ for all $k$.  We take also measurable sets $E_k$ with $\mu(E_k) = 1/k^{1/2}$ that are $T$ almost invariant to the degree that
$\sum\limits_{k=1}^\infty k^{1/2} \mu(E_k\Delta T (E_k) < \infty$.
Let $h_k = a_k 1_{E_k}$.  Then
consider $f = \sum\limits_{k=1}^\infty h_k - h_k\circ T$.
We have $\sum\limits_{k=1}^\infty \|h_k - h_k\circ T\|_1 \le
\sum\limits_{k=1}^\infty (1/k^{1/2}) \mu(E_k\Delta T(E_k))$.  So 
$f$ is an $L^1$-norm convergent series.   We also have 
$\|h_k\|_2 = a_k\mu(E_k)^{1/2} = (1/k^{1/2}) (1/k^{1/4}) = 1/k^{3/4}$.  This gives the first hypothesis in Proposition~\ref{seriesbasic}.  Also,
\[\sum\limits_{k=1}^\infty k\|h_k - h_k\circ T\|_1 \le
\sum\limits_{k=1}^\infty (k/k^{1/2}) \mu(E_k\Delta T(E_k)) < \infty.\]
This gives the second hypothesis in Proposition ~\ref{seriesbasic}.  Hence,
$f$ is good for all moving averages with respect to $T$. 

We want to arrange now for $f$ to not be a $T$-coboundary.  For technical reasons, we rewrite
$f = \sum\limits_{k=1}^\infty h_k^0 - h_k^0\circ T$ where $h_k^0 = h_k -\int_X h_k \, d\mu$.  In our case,
we have $\|h_k^0\|_1 = (2/k)(1 - 1/k^{1/2})$.  Hence, we have $\sum\limits_{k=1}^\infty \|h_k^0\|_1 = \infty$.  This in itself is not enough to show that $f$ is not a $T$-coboundary; see Proposition~\ref{divCOBseries}.  So we make these additional inductive adjustments.

Let $S_n^T (f) = \sum\limits_{k=1}^n f\circ T^k$.  Let
$C_N^T (f) =\frac 1N \sum\limits_{n=1}^N S_n^T (f)$.  It is well-known that $f$ is a $T$-coboundary with transfer function $F\in L^1(\mu)$ if and only if $(C_N^T (f))$ converges in $L^1$-norm.  Moreover, if $F$ is mean-zero, then $C_N^T (f) \to F$ in $L^1$-norm as $N\to \infty$.   See Lin and Sine~\cite{LS}.
\medskip

Now suppose we have  chosen $(h_k:k=1,\dots,K)$ as above.   Then
for $N_K$ large enough, we would have $\|C_{N_K}^T (\sum\limits_{k=1}^K h^0_k - h^0_k\circ T)\|_1 \ge \frac 12\|\sum\limits_{k=1}^K h^0_k\|_1$.  
We then need to choose the $h_k, k > K$ such that we still have
$\|C_{N_K}^T (f)\|_1 \ge \frac 14\|\sum\limits_{k=1}^K h^0_k \|_1$.   This would be easier to achieve, and in fact the additional constraints on $(h_k)$ would not even be necessary, if $C_n$ were uniformly bounded operators.  However, it is easy to see that on the mean-zero functions in $L^1(\mu)$, the operator norm $\|C_N\|_1 = \frac {N+1}2$.  To deal with this, we just make sure that the $E_k,k > K$ are sufficiently $T$-invariant, so that in turn $\|h_k - h_k\circ\tau\|_1$ is sufficiently small, so that
$\|C_{N_K}^T (\sum\limits_{k=K+1}^\infty h^0_k - h^0_k\circ \tau)\|_1 \le \frac 14\|\sum\limits_{k=1}^K h^0_k \|_1$.   This is easy to arrange inductively without affecting the conditions already imposed above.  This then implies that $C_N^T (f)$ does not converge in $L^1(\mu)$.  Hence, $f$ is not a $T$-coboundary with transfer function in $L^1(\mu)$.

It is likely that the series of coboundaries construction given here can give functions that are not in any known class of functions 
good for all moving averages with respect to $T$.  For example, we should be able to arrange that $f$ is also not a fractional coboundaries as discussed in Derriennic and Lin~\cite{DL}.

\section{Functions with No Universal Moving Averages}

In this section, we give conditions on mean-zero functions 
such that for any invertible ergodic measure preserving 
transformation $T$, there is a divergent moving average. 
First, we define a specific family of mean-zero functions. 

\subsection{Definition of $\mathcal{F}$}
Let $a_n >2$ be a sequence of real numbers 
with super polynomial growth.  Choose disjoint subsets 
$A_n \subset X$ of positive measure such that 
\[
\sum_{n=1}^{\infty} a_n \mu (A_n) \leq \frac{1}{2} . 
\]
Choose a subset $B \subset X$ 
such that $B\cap ( \bigcup_{n=1}^{\infty} A_n ) = \emptyset$ 
and $\mu (B) = \sum_{n=1}^{\infty} a_n \mu(A_n)$.  
Define the function $f$ as 
\begin{eqnarray*} 
f(x)= 
\left\{\begin{array}{ll}
a_n  & \mbox{if $x \in A_n$}, \\ 
- 1 & \mbox{if $x \in B$}.
\end{array}
\right.
\end{eqnarray*}
Thus, 
\[
\int_X f d\mu = \sum_{n=1}^{\infty} a_n \mu (A_n) - \mu (B) = 0. 
\]
Let $\mathcal{F}$ represent this collection of measurable functions.  
Given a real number $p>1$, let 
\[
\mathcal{F}_p = \{ f\in \mathcal{F}: \lim_{n\to \infty} a_n^p \mu(A_n)=\infty \} . 
\]
Note that $\mathcal{F}_p \cap L^p(\mu) = \emptyset$.  However, 
it is straightforward to show that 
\[
\mathcal{F}_p \cap \Big( \bigcap_{q<p} L^q(\mu) \Big) \neq \emptyset . 
\]
On the flip side, we have that 
\[
L^p(\mu) \cap \Big( \bigcap_{q>p} \mathcal{F}_q \Big) \neq \emptyset . 
\]
These properties will be useful for constructing counterexamples based 
on our main result. 
\subsection{Main results}
Our main result in this section is Theorem \ref{thm-bad-fun} which is a counterpart 
to Proposition \ref{Lp2}. 
\begin{thm}
\label{thm-bad-fun}
Let $(X, \mu, \B)$ be a standard probability space 
and $p > 1$.  Given a mean-zero function $f\in \mathcal{F}_p$ and 
any invertible ergodic $\mu$-preserving transformation $T:X\to X$, 
there exists a moving average $(v_n, L_n)$ with $L_n\geq n^{\frac{1}{p-1}}$ 
such that for a.e. $x\in X$, 
\[
\limsup_{n\to \infty} \frac{1}{L_n} \sum_{i=v_n+1}^{v_n+L_n} 
f(T^i x) > 0 . 
\]
\end{thm}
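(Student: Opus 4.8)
\noindent\emph{Proof strategy.} The plan is to derive the divergence from a \emph{complete‑divergence} statement for the ordinary averages $M(0,\ell)^Tf$, in the spirit of the proof of Proposition~\ref{charactergood} and Proposition~\ref{Divgen}, and then to extract that complete divergence from the only information we have about $f$, namely $a_n^p\mu(A_n)\to\infty$.

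\smallskip
\noindent\textbf{Reduction 1 (to complete divergence).} It suffices to produce $\delta>0$ and an increasing sequence $(L_k)$ in $\Z^+$ with $L_k\ge k^{1/(p-1)}$ and
\[
\sum_{k=1}^{\infty}\mu\big(\{x:M(0,L_k)^Tf(x)\ge\delta\}\big)=\infty .
\]
Indeed, applying Lemma~\ref{Div} to the ergodic map $T^{-1}$ and the sets $E_k=\{M(0,L_k)^Tf\ge\delta\}$ (we keep only the indices with $\mu(E_k)>0$; by the construction below these are cofinite), one gets an increasing $(v_k)$ in $\Z^+$ with $T^{v_k}x\in E_k$ for infinitely many $k$, a.e.\ $x$; that is $M(v_k,L_k)^Tf(x)=M(0,L_k)^Tf(T^{v_k}x)\ge\delta$ infinitely often, so $\limsup_k\frac1{L_k}\sum_{i=v_k+1}^{v_k+L_k}f(T^ix)\ge\delta>0$ a.e.

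\smallskip
\noindent\textbf{Reduction 2 (one good scale is enough).} In turn it suffices to find $\delta>0$ with $\limsup_{\ell\to\infty}\ell^{\,p-1}\mu\{M(0,\ell)^Tf\ge\delta\}>0$. Given this, choose $c>0$ and a rapidly increasing sequence $\ell_1<\ell_2<\cdots$ with $\ell_n^{\,p-1}\ge2\ell_{n-1}^{\,p-1}$ and $\ell_n^{\,p-1}\mu\{M(0,\ell_n)^Tf\ge\delta\}\ge c$, and in the list $(L_k)$ repeat the length $\ell_n$ for $R_n:=\lfloor\tfrac12\ell_n^{\,p-1}\rfloor$ consecutive indices. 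Then $\sum_{j\le n}R_j\le\ell_n^{\,p-1}$, so every index $k$ used at stage $n$ has $L_k=\ell_n\ge k^{1/(p-1)}$, and $\sum_k\mu\{M(0,L_k)^Tf\ge\delta\}=\sum_nR_n\mu\{M(0,\ell_n)^Tf\ge\delta\}\ge\sum_n\tfrac c4=\infty$. (The lengths are only weakly increasing here, which is all the statement asks.)

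\smallskip
\noindent\textbf{The efficient case.} For $A\subseteq X$ of positive measure and $\ell\in\Z^+$ put $\phi_A(\ell)=\mu\!\big(\bigcup_{i=1}^{\ell}T^{-i}A\big)$ and $g_{A,\ell}(x)=\#\{1\le i\le\ell:T^ix\in A\}$, so $\int g_{A,\ell}\,d\mu=\ell\mu(A)$. Fix a small $\delta>0$ and let $\ell_n=\lceil a_n/(2(1+\delta))\rceil$. Since $f\ge-1$ and $f\equiv a_n$ on $A_n$, any $x$ with $g_{A_n,\ell_n}(x)\ge1$ satisfies $M(0,\ell_n)^Tf(x)\ge(a_n-\ell_n)/\ell_n\to 1+2\delta$, hence $\ge\delta$ for $n$ large; thus $\mu\{M(0,\ell_n)^Tf\ge\delta\}\ge\phi_{A_n}(\ell_n)$. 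If $\phi_{A_n}(\ell_n)\ge\tfrac12\ell_n\mu(A_n)$ for infinitely many $n$, then along those $n$,
\[
\ell_n^{\,p-1}\mu\{M(0,\ell_n)^Tf\ge\delta\}\ \ge\ \tfrac12\ell_n^{\,p}\mu(A_n)\ \asymp\ a_n^{\,p}\mu(A_n)\ \longrightarrow\ \infty,
\]
and Reduction~2 applies. So it remains to treat the case where $\phi_{A_n}(\ell_n)<\tfrac12\ell_n\mu(A_n)$ for all large $n$, i.e.\ the visits of $A_n$ along orbits are heavily clumped.

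\smallskip
\noindent\textbf{The clumped case, and the main obstacle.} Decompose each orbit into maximal runs of consecutive $A_n$-visits, and let $D_n(m):=\mu\!\big(TA_n^{\,c}\cap\bigcap_{k=0}^{m-1}T^{-k}A_n\big)$ be the frequency (along a.e.\ orbit, by the ergodic theorem) of starts of $A_n$-runs of length $\ge m$. Since every element of $A_n$ lies in exactly one run, $\sum_{m\ge1}D_n(m)=\mu(A_n)$; because $p>1$, the elementary fact that $d_m\ge0$, $\sum_m d_m<\infty$ force $\sup_m m^p d_m\ge\big(\sum_m d_m\big)/\zeta(p)$ gives
\[
\sup_{m\ge1}m^{\,p}D_n(m)\ \ge\ \frac{\mu(A_n)}{\zeta(p)} .
\]
Let $m_n$ attain this supremum and set $\ell_n:=\big\lfloor a_n m_n/(1+\delta)\big\rfloor$. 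For $n$ large one has $\ell_n\ge m_n$, and any block of length $\ell_n$ that entirely contains an $A_n$-run of length $\ge m_n$ collects at least $m_n$ values $a_n$ and at most $\ell_n-m_n$ values $\ge-1$, so on it $M(0,\ell_n)^Tf\ge(m_na_n-\ell_n)/\ell_n\ge\delta$; the same holds on a block lying inside a longer run. Writing $\psi_n$ for the measure of the set of $x$ whose orbit window $\{T^1x,\dots,T^{\ell_n}x\}$ fully contains an $A_n$-run of length $\ge m_n$, the $A_n$-runs of length $\ge m_n$ being mutually $(m_n{+}1)$-separated, one expects
\[
\psi_n\ \gtrsim\ \min\{1,\ \ell_n D_n(m_n)\},\qquad\text{whence}\qquad
\ell_n^{\,p-1}\mu\{M(0,\ell_n)^Tf\ge\delta\}\ \ge\ \ell_n^{\,p-1}\psi_n\ \gtrsim\ \min\{\ell_n^{\,p}D_n(m_n),\ \ell_n^{\,p-1}\},
\]
and both quantities tend to $\infty$: the first is $\asymp a_n^{\,p}m_n^{\,p}D_n(m_n)/(1+\delta)^p\ge a_n^{\,p}\mu(A_n)/\big((1+\delta)^p\zeta(p)\big)$, and the second is $\ge\big(a_n/(1+\delta)\big)^{p-1}$. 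By Reductions~1 and~2 this finishes the proof.

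\smallskip
The step I expect to be genuinely delicate is the lower bound $\psi_n\gtrsim\min\{1,\ell_nD_n(m_n)\}$. We assume only ergodicity (no mixing), so the $A_n$-runs of length $\ge m_n$ may themselves be positively correlated along orbits, and a crude first/second‑moment estimate only yields $\psi_n\gtrsim D_n(m_n)$ — losing a factor $\asymp a_n$ and hence not enough. My plan here is a Rokhlin‑tower/ergodic‑theorem bookkeeping argument together with a further dichotomy: either these runs occur essentially at frequency $D_n(m_n)$ and are spread out, in which case the count of admissible block positions genuinely behaves like $\ell_nD_n(m_n)$; or they cluster, in which case the cluster already forces $g_{A_n,\ell'}\approx\ell'$ on a set of measure $\gtrsim\mu(A_n)$ at a somewhat larger scale $\ell'$ and one wins there instead. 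This recursion must terminate, since $\bigcup_{i\le\ell}T^{-i}A_n\uparrow X$ by ergodicity, so clustering cannot persist through all scales.
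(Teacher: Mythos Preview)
Your Reductions~1 and~2 are sound (modulo the minor issue that the paper's convention makes $L_n$ \emph{strictly} increasing, so repeating $\ell_n$ verbatim needs a small perturbation), and the ``efficient case'' is fine. But the proof is genuinely incomplete at exactly the point you flag: the inequality $\psi_n\gtrsim\min\{1,\ell_nD_n(m_n)\}$ is asserted, not proven, and the ``further dichotomy plus recursion'' you outline is only a plan. Without mixing, the starts of long $A_n$-runs can themselves cluster arbitrarily, and nothing in your setup forces the recursion to gain a definite factor at each step or to terminate in boundedly many rounds; ``$\bigcup_{i\le\ell}T^{-i}A_n\uparrow X$'' tells you only that clustering fails \emph{eventually}, not that it fails at a scale comparable to $a_n m_n$, which is what you need for $\ell_n^{p-1}\psi_n\to\infty$. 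As written, then, the clumped case is a gap, not a proof.

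The paper's proof sidesteps your efficient/clumped dichotomy entirely. Instead of measuring how $A_n$-visits spread in a window, it introduces the backward stopping time $k_x=\inf\{k:\sum_{i=0}^{k-1}f(T^{-i}x)<k/2\}$, observes that $k_x>\tfrac{2}{3}a_n$ on $A_n$, builds a tall Rokhlin tower refined so that $k_x$ is constant on each column piece, and then applies the Vitali covering lemma to the family of orbit intervals $[x,T^{-1}x,\dots,T^{-(k_x-1)}x]$ with $x\in A_n$. The covering lemma yields a \emph{disjoint} subfamily capturing a fixed fraction of $A_n$, and a direct count (using $f\ge-1$) shows each selected interval contains at most $3k/(2a_n)$ points of $A_n$, so the disjoint union has measure $\gtrsim a_n\mu(A_n)$. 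Trimming each interval to its lower quarter produces the set $B_n$ of Lemma~\ref{lem-bad-fun} with $\mu(B_n)\gtrsim a_n\mu(A_n)$ and $\ell_x\gtrsim a_n$, and that lemma (whose proof is where the $L_n\ge n^{1/(p-1)}$ bookkeeping and the ergodic spreading happen) finishes. The point is that the Vitali step replaces your missing spreading estimate: disjointness of the selected intervals is what converts ``$A_n$ has mass $\mu(A_n)$'' into ``a set of mass $\asymp a_n\mu(A_n)$ sees a good average of length $\asymp a_n$'', with no case analysis.
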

This theorem implies the following corollaries which were longstanding 
open problems on the theory of moving averages. 
\begin{cor}
There exists a mean-zero function $f$ such that $f \in L^{p}(\mu)$ for all $p<2$ 
and given any invertible ergodic $\mu$-preserving transformation $T$, 
there exists a moving average $(v_n, L_n)$ with $L_n\geq n$ such that 
for a.e. $x\in X$, 
\[
\limsup_{n\to \infty} \frac{1}{L_n} \sum_{i=v_n+1}^{v_n+L_n} f(T^i x) > 0 . 
\]
In other words, $f$ is a universally bad function with no universal convergence 
of moving averages for any invertible ergodic measure preserving transformation. 
\end{cor}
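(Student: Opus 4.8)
The plan is to read this corollary directly off Theorem~\ref{thm-bad-fun} at the exponent $p = 2$. First I would fix a mean-zero function $f$ lying in $\mathcal{F}_2 \cap \bigcap_{q<2} L^q(\mu)$; that this intersection is nonempty is exactly the remark recorded just after the definition of $\mathcal{F}_p$ (namely $\mathcal{F}_p \cap \bigcap_{q<p} L^q(\mu) \ne \emptyset$), specialized to $p = 2$. Concretely one may take $a_n = 2^{n+1}$, which has super-polynomial growth and exceeds $2$; choose disjoint sets $A_n$ with $\mu(A_n)$ equal to a small constant multiple of $n\,2^{-2n}$, the constant small enough that $\sum_n a_n \mu(A_n) \le \tfrac12$; and pick $B$ disjoint from $\bigcup_n A_n$ with $\mu(B) = \sum_n a_n\mu(A_n)$. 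Then $f \in \mathcal{F}$ and is mean-zero; $a_n^q\mu(A_n)$ is comparable to $n\,2^{(q-2)n}$, which is summable for every $q < 2$, so $f \in L^q(\mu)$ for all $q < 2$; and $a_n^2\mu(A_n)$ is comparable to $n$, which tends to $\infty$, so $f \in \mathcal{F}_2$.

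With this $f$ fixed, let $T$ be an arbitrary invertible ergodic $\mu$-preserving transformation of $X$. Theorem~\ref{thm-bad-fun} applied with $p = 2$ then produces a moving average $(v_n, L_n)$ with $L_n \ge n^{1/(p-1)} = n$ such that for a.e. $x \in X$,
\[
\limsup_{n\to\infty} \frac{1}{L_n}\sum_{i=v_n+1}^{v_n+L_n} f(T^i x) > 0 ,
\]
which is precisely the asserted inequality, with the required lower bound $L_n \ge n$.

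Finally, to justify the phrase ``universally bad'': since $f$ is mean-zero, $M(v_n,L_n)^T f$ converges to $0 = \int_X f\,d\mu$ in $L^1$-norm by the elementary norm-convergence argument of Section~\ref{IntroMovAve}, so some subsequence of the moving averages converges to $0$ a.e. If $M(v_n,L_n)^T f$ converged a.e., its a.e. limit would have to equal $0$, forcing the $\limsup$ to be $0$ a.e. and contradicting the displayed inequality; hence $M(v_n,L_n)^T f$ fails to converge a.e. As $T$ was an arbitrary invertible ergodic measure-preserving transformation, $f$ admits no universally convergent family of moving averages. There is essentially no obstacle beyond Theorem~\ref{thm-bad-fun} itself; the only point needing a little care is the bookkeeping in the first step, where one must simultaneously arrange $\sum_n a_n\mu(A_n)\le\tfrac12$, membership $f \in L^q(\mu)$ for all $q < 2$, and $a_n^2\mu(A_n)\to\infty$ --- which the explicit choice above handles.
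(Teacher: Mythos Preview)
Your proof is correct and follows essentially the same route as the paper: construct an explicit $f\in\mathcal{F}_2\cap\bigcap_{q<2}L^q(\mu)$ and then invoke Theorem~\ref{thm-bad-fun} with $p=2$, so that $L_n\ge n^{1/(p-1)}=n$. The paper takes $\mu(A_n)=n/a_n^2$ for a generic super-polynomial $a_n$, while you fix $a_n=2^{n+1}$ and scale $\mu(A_n)$ by a small constant to enforce $\sum_n a_n\mu(A_n)\le\tfrac12$; these are the same construction, and your version is slightly more careful about that summability constraint. Your final paragraph justifying the phrase ``universally bad'' via $L^1$-norm convergence of the moving averages is a welcome addition that the paper leaves implicit.
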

\begin{proof}
Given the sequence $a_n$ with super polynomial growth, choose disjoint subsets 
$A_n \subset X$ such that $\mu(A_n) = {n}/{a_n^2}$.  
Since $a_n^2 \mu(A_n)\to \infty$ as $n\to \infty$, then the corresponding 
function $f\in \mathcal{F}$ is also contained in $\mathcal{F}_2$.  
Thus, the function $f$ satisfies the conclusion of Theorem \ref{thm-bad-fun} 
for $p=2$ or alternatively, $L_n\geq n^{\frac{1}{p-1}}=n$.  
Since $a_n$ has super polynomial growth, then for $p<2$, 
\[
\int_{X} |f|^p d\mu = \sum_{n=1}^{\infty} a_n^p \mu(A_n) + \mu (B) 
= \sum_{n=1}^{\infty} \frac{n}{a_n^{2-p}} + \mu (B) < \infty .\ \ \Box
\]
\end{proof}

\begin{cor}
There exists a mean-zero function $f\in L^1(\mu)$ such that 
given any invertible ergodic $\mu$-preserving transformation $T$ and 
any polynomial rate $n^d$, $d\in \mathbb{Z}^+$, 
there exist integers $v_n=v_{n,d}$ such that for a.e. $x\in X$, 
\[
\limsup_{n\to \infty} \frac{1}{n^d} \sum_{i=v_n+1}^{v_n+n^d} 
f(T^i x) > 0 . 
\]
In other words, there exist integrable functions which are universally 
bad for any polynomial rate and any invertible ergodic measure preserving 
transformation. 
\end{cor}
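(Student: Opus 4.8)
The plan is to deduce the corollary from Theorem~\ref{thm-bad-fun} by producing a single mean-zero function $f\in L^1(\mu)$ that lies in $\mathcal{F}_p$ for \emph{every} $p>1$, and then, for a prescribed polynomial rate $n^d$, applying the theorem with the tailored exponent $p=1+\tfrac1d$, for which the threshold length $n^{1/(p-1)}$ equals $n^d$ exactly.

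To construct $f$, I would invoke the property recorded after the definition of $\mathcal{F}_p$ that $L^1(\mu)\cap\bigl(\bigcap_{q>1}\mathcal{F}_q\bigr)\neq\emptyset$. Concretely, take $a_n=2^{2^n}$ (which has super-polynomial growth and satisfies $a_n>2$) and $\mu(A_n)=c\,n^{-2}a_n^{-1}$, where $c>0$ is a normalizing constant chosen so that $\sum_n a_n\mu(A_n)<\tfrac12$, and let $f\in\mathcal{F}$ be the associated function. Then $\int_X|f|\,d\mu=2c\sum_n n^{-2}<\infty$, so $f\in L^1(\mu)$ and $f$ is mean-zero, whereas for each $p>1$ one has $a_n^p\mu(A_n)=c\,n^{-2}2^{2^n(p-1)}\to\infty$, so $f\in\mathcal{F}_p$. (Here $f\notin L^q(\mu)$ for any $q>1$, which is consistent with the corollary, since it only claims $f\in L^1$.)

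Now fix an invertible ergodic measure preserving $T$ and $d\in\mathbb{Z}^+$, and put $p=1+\tfrac1d>1$. Since $f\in\mathcal{F}_p$, Theorem~\ref{thm-bad-fun} applies and yields a moving average $(v_n,L_n)$ with $L_n\geq n^{1/(p-1)}=n^d$ and $\limsup_{n\to\infty}\frac1{L_n}\sum_{i=v_n+1}^{v_n+L_n}f(T^ix)>0$ for a.e.\ $x$. Because $n^d$ is already an integer, the construction behind Theorem~\ref{thm-bad-fun} can be run with $L_n$ taken equal to its threshold value $n^{1/(p-1)}=n^d$; renaming $v_n=v_{n,d}$ then gives precisely the assertion of the corollary. (The preceding corollary is the special case $d=1$, with the extra feature that, by instead choosing $\mu(A_n)=n/a_n^2$, one can take $f\in\bigcap_{p<2}L^p(\mu)$.)

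The one step that genuinely needs care is the relationship between the exponent $p$ and the admissible window lengths in Theorem~\ref{thm-bad-fun}: one must check that its proof delivers, or can be arranged to deliver, a bad moving average with $L_n$ exactly $\lceil n^{1/(p-1)}\rceil$, and not merely some more rapidly growing sequence dominating the threshold. This is exactly why the choice $p=1+\tfrac1d$ is essential and why one cannot simply apply the $p=2$ case of the theorem to every rate at once — a window much longer than the threshold would dilute the surplus contributed at stage $n$ by the set $A_n$ and destroy the positive $\limsup$. Everything else, including the verification that $f\in L^1(\mu)\cap\bigcap_{p>1}\mathcal{F}_p$, is routine.
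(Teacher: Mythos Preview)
Your approach is essentially the same as the paper's: build a single $f\in L^1(\mu)\cap\bigcap_{p>1}\mathcal{F}_p$ via $\mu(A_n)\sim c\,n^{-2}a_n^{-1}$ (the paper uses the constant $c=3/\pi^2$), and then invoke Theorem~\ref{thm-bad-fun} with the exponent matched to the desired rate. Your explicit choice $p=1+\tfrac1d$ is exactly what the paper means by ``letting $\tfrac{1}{p-1}\to\infty$ as $p\to 1^+$,'' and your flag about whether the theorem's output can be arranged with $L_n=n^d$ on the nose (rather than merely $L_n\ge n^d$) is a legitimate point that the paper's proof also leaves implicit.
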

\begin{proof}
Given the sequence $a_n$ with super polynomial growth, choose disjoint subsets 
$A_n \subset X$ such that 
\[
\mu(A_n) = \frac{3}{a_n n^2 \pi^2} . 
\]
Since 
\[
\sum_{n=1}^{\infty} \frac{1}{n^2} = \frac{\pi^2}{6} , 
\]
then the corresponding function $f$ is integrable and contained in $\mathcal{F}$.  
Since $a_n$ has super polynomial growth, then for $p>1$, 
\[
a_n^p \mu(A_n) = \frac{3a_n^{p-1}}{n^2 \pi^2} \to \infty \ \ 
\mbox{as $n\to \infty$} . 
\]
Therefore, the function $f\in \mathcal{F}_p$ satisfies the conditions 
of Theorem \ref{thm-bad-fun} for any $p>1$.  Letting 
$\frac{1}{p-1} \to \infty$ as $p\to 1^+$, proves the corollary.  $\Box$ 
\end{proof}

\subsection{Main lemma}
The previous theorem will use the following lemma. 
\begin{lem}
\label{lem-bad-fun}
Let $p > 1$ be a real number, $f \in \mathcal{F}_p$, 
and $T$ be an invertible ergodic measure preserving 
transformation on a standard probability space $(X, \mu, \B)$.  
Suppose there exists a real number $\eta$ such that $0<\eta<1$ and for 
each $n\in \mathbb Z^+$, there exists a measurable set $B_n$ such that 
$\mu (B_n) > \eta a_n \mu(A_n)$ and for each $x\in B_n$, there exists 
$\ell_x > \eta a_n$ such that 
\[
\sum_{i=0}^{\ell_x-1} f ( T^i x ) > \eta \ell_x . 
\]
Then there exists a moving average $(v_n, L_n)$ with $L_n \geq n^{\frac{1}{p-1}}$ 
such that for almost every $x\in X$, 
\[
\limsup_{n\to \infty} \frac{1}{L_n} \sum_{i=v_n+1}^{v_n+L_n} 
f ( T^{i} x ) > 0 . 
\]
\end{lem}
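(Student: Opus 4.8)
The plan is to turn the hypothesis — which for each scale $n$ provides a positive-measure ``launch set'' $B_n$ from which the forward $f$-averages grow past $\eta$ over some orbit length $\ell_x$ — into a \emph{single} sequence of moving averages $(v_m,L_m)$ along which, for almost every $x$, infinitely many averages exceed $\eta$. Once that is in hand, $\limsup_m \frac{1}{L_m}\sum_{i=v_m+1}^{v_m+L_m} f(T^i x) \ge \eta > 0$ for a.e. $x$, which is the asserted conclusion (after trivially shifting each $v_m$ by one to match the summation convention $\sum_{i=v_m+1}^{v_m+L_m}$).

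The first step is to remove the $x$-dependence of the excursion length. For each $n$, decompose $B_n = \bigsqcup_{\ell} B_{n,\ell}$ with $B_{n,\ell} = \{x \in B_n : \ell_x = \ell\}$; every $\ell$ that occurs satisfies $\ell > \eta a_n$. Since $\sum_{\ell} \mu(B_{n,\ell}) = \mu(B_n) > \eta a_n \mu(A_n)$, we may select a single length $L_n > \eta a_n$ (for instance one maximizing $\mu(B_{n,\ell})$, or the shortest length carrying a prescribed fraction of $\mu(B_n)$) and set $\hat B_n = B_{n,L_n}$. Then $\mu(\hat B_n) > 0$ and, by construction, $\frac{1}{L_n}\sum_{i=0}^{L_n-1} f(T^i x) > \eta$ for every $x \in \hat B_n$.

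The second step propagates this from the small set $\hat B_n$ to almost every point and ties the scales together. Since $\sum_n \mu(\hat B_n)$ need not diverge, we amplify by repetition: list the sets $\hat B_n$, including $\hat B_n$ exactly $N_n := \lceil 1/\mu(\hat B_n)\rceil$ times, obtaining a sequence $(E_j)$ with $\sum_j \mu(E_j) = \sum_n N_n \mu(\hat B_n) = \infty$. Apply Lemma~\ref{Div} to $T^{-1}$ and the sequence $(E_j)$: there is an increasing sequence $(m_j)$ such that for a.e. $x$ one has $T^{m_j}x \in E_j$ for infinitely many $j$. (Alternatively, argue directly: by ergodicity choose, for each $n$, finitely many shifts whose translates of $\hat B_n$ cover all but $2^{-n}$ of $X$, and use the easy half of Borel--Cantelli.) Relabel the resulting pairs $(m_j,\, L_{n(j)})$ as a single sequence $(v_m,L_m)$. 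Then for a.e. $x$ there are infinitely many $m$ with $\frac{1}{L_m}\sum_{i=0}^{L_m-1} f(T^{v_m+i}x) > \eta$, whence $\limsup_m \frac{1}{L_m}\sum_{i=v_m+1}^{v_m+L_m} f(T^i x) \ge \eta > 0$ a.e., and moreover these good averages occur across infinitely many distinct scales $n$ since each scale contributes only finitely many indices $j$.

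The step I expect to be the main obstacle is verifying the length bound $L_m \ge m^{1/(p-1)}$ for the \emph{re-indexed} sequence. Each moving average coming from scale $n$ has length $L_n > \eta a_n$, and since $a_n$ grows super-polynomially this dominates any fixed power of $n$; the trouble is that the enumeration uses $\sum_{n'\le n} N_{n'}$ moving averages to exhaust scale $n$, so the running index can outpace the scale. The resolution is to exploit $f \in \mathcal{F}_p$, i.e. $a_n^p\mu(A_n)\to\infty$, to bound $\mu(\hat B_n)$ below in terms of $a_n$ and $\mu(A_n)$ (hence bound $N_n$), and, if needed, to thin the sequence of scales and to choose $L_n$ among the available excursion lengths so that $\eta a_n$ outgrows $\sum_{n'\le n} N_{n'}$. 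This matching of growth rates is the delicate bookkeeping; everything else is soft, using only the ergodicity of $T$ together with Lemma~\ref{Div} (or the Borel--Cantelli lemma).
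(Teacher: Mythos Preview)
Your overall architecture is right, and you correctly identify the crux: controlling the running index so that $L_m\ge m^{1/(p-1)}$. But the first step—reducing to a \emph{single} length $L_n$ per scale—throws away exactly the information you need for that bookkeeping. After slicing $B_n$ by the value of $\ell_x$, you retain only $\mu(\hat B_n)>0$; the quantitative bound $\mu(B_n)>\eta a_n\mu(A_n)$ is lost, and there is no way to recover a lower bound on $\mu(\hat B_n)$ in terms of $a_n,\mu(A_n)$ from the hypotheses (the values $\ell_x$ can spread over an arbitrarily long interval, so every individual slice can be arbitrarily thin). Consequently the number $N_n$ of translates needed to cover $X$ up to $2^{-n}$ is $\gtrsim n/\mu(\hat B_n)$ and cannot be bounded by $a_n^{p-1}$, which is precisely what the inequality $L_m\ge m^{1/(p-1)}$ would require. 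Note that the $\mathcal{F}_p$ hypothesis $a_n^{p}\mu(A_n)\to\infty$ is a \emph{tight} balance: it says exactly that $\sim a_n^{p-1}$ translates of a set of measure $\sim a_n\mu(A_n)$ suffice to cover most of $X$; any loss in the measure of the launch set breaks this balance.

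The paper's proof avoids this by \emph{not} collapsing to one length. It first truncates to $B_n'=\{x\in B_n:\ell_x\le K\}$ with $\mu(B_n')>\tfrac12\eta a_n\mu(A_n)$, then quantizes the range of $\ell_x$ into finitely many bins $r_1<\cdots<r_t$ and assigns a separate moving-average length $L_{j\tau-k}$ to each bin $j$ and each shift $k$, with $\tau=\lfloor (a_n\delta)^{p-1}\rfloor\sim a_n^{p-1}$. Two ingredients you have not used are essential here: (i) the pointwise bound $f\ge -1$ built into the definition of $\mathcal{F}$, which lets one extend the sum from $\ell_x$ to the nearby quantized length with a controlled loss; and (ii) the full measure of $B_n'$, so that after $\tau\sim a_n^{p-1}$ ergodic translates the uncovered set has measure $\lesssim(1-\tfrac{\eta}{4}a_n\mu(A_n))^{a_n^{p-1}}\to 0$ precisely because $a_n^p\mu(A_n)\to\infty$. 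The index constraint $L_{j\tau-k}\ge (j\tau-k)^{1/(p-1)}$ then falls out of the explicit formulas. In short, the fix is not to ``thin scales'' or ``bound $\mu(\hat B_n)$ below'' but to keep all of $B_n'$ and let the length depend on both the bin of $\ell_x$ and the shift index.
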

\begin{proof}
Choose $K > \eta a_n$ such that the set 
\begin{align}
B_n^{\prime} &= \{ x\in B_n: \ell_x \leq K \} \label{B1} 
\end{align}
satisfies 
\begin{align}
\mu (B_n^{\prime}) &> \frac{1}{2} \mu (B_n) > \frac{\eta a_n \mu (A_n)}{2} . 
\label{B2}
\end{align}
For convenience, set 
\[
\delta =  \Big( \frac{\eta^2}{16} \Big)^{\frac{1}{p-1}} . 
\]
Quantize the set of $\ell_x$ for $x\in B_n^{\prime}$ in the following manner. 
For each $x\in B_n^{\prime}$, let 
\[
\beta (x) = \min{\{ r\in \mathbb Z^+: a_n \delta \cdot r^{\frac{1}{p-1}} 
\geq \ell_x \}} . 
\]
Enumerate $r_1 < r_2 < \ldots < r_t$ such that 
\[
B_n^{\prime} = \bigcup_{i=1}^{t} \{ x\in B_n^{\prime}: \beta(x)=r_i \} . 
\]
Set 
\[
\tau = \lfloor \big( a_n \delta \big)^{p-1} \rfloor . 
\]
For $0\leq k < \tau$ and $1\leq j \leq t$, define 
\[
L_{j\tau - k} = 
\lceil a_n \delta \cdot r_j^{\frac{1}{p-1}} \rceil 
+ \lceil \frac{\eta^2 a_n^{p-1}}{8} \rceil - k . 
\] 
It is straightforward to show that 
\[
L_{j\tau - k} \geq \big( r_j \tau \big)^{\frac{1}{p-1}} \geq 
\big( j\tau \big)^{\frac{1}{p-1}} \geq 
\big( j\tau - k \big)^{\frac{1}{p-1}} . 
\]
For $x\in B_n^{\prime}$ such that $\beta(x)=r_j$, we set $k=0$. 
Thus,
\begin{align}
\sum_{i=0}^{L_{j\tau}-1} f(T^i x) &= \sum_{i=0}^{\ell_x-1} f(T^i x) + 
\sum_{i=\ell_x}^{L_{j\tau}-1} f(T^i x) \\ 
&> \eta \ell_x - \frac{\eta^2 a_n}{4} . 
\end{align}
Hence, 
\begin{align}
\frac{1}{L_{j\tau}} \sum_{i=0}^{L_{j\tau}-1} f(T^i x) &> 
\eta \frac{\ell_x}{L_{j\tau}} - \frac{\eta^2 a_n}{4\ell_x} \\ 
&> \eta \Big( \frac{\ell_x}{\ell_x+\frac{\eta^2 a_n}{4}}\Big) - 
\frac{\eta}{4} \\ 
&= \eta \Big( \frac{\ell_x+\frac{\eta^2 a_n}{4}}{\ell_x+\frac{\eta^2 a_n}{4}}\Big) - \eta \Big( \frac{\frac{\eta^2 a_n}{4}}{\ell_x+\frac{\eta^2 a_n}{4}}\Big) - \frac{\eta}{4} \\ 
&> \eta - \frac{\eta^3 a_n}{4\ell_x} - \frac{\eta}{4} > \frac{\eta}{2} . 
\end{align}
This previous equation identifies points with bad averages.  
We will use the ergodicity of $T$ to iteratively identify more points 
with a bad moving average.  
Let $D_1 = X\setminus B_n^{\prime}$.  Since $T$ is ergodic, there 
exists $m_1$ such that 
$\mu (D_1\cap T^{-m_1}B_n^{\prime}) > \frac{1}{2} \mu(D_1)\mu(B_n^{\prime})$.  
Let $D_2=D_1\setminus T^{-m_1}B_n^{\prime}$.  Thus, 
\[
\mu(D_2) < \mu(D_1) \Big( 1-\frac{1}{2}\mu(B_n^{\prime})\Big) 
< \Big( 1 - \frac{1}{2} \mu(B_n^{\prime})\Big)^2 . 
\]
Continue this procedure $\tau - 1$ times to produce sets 
$D_1, D_2, \ldots , D_{\tau-1}$ and iterates 
$m_0, m_1, \ldots , m_{\tau-1}$.  Set $m_0=0$ and let 
$\mathcal{M} = \{m_0, m_1, \ldots , m_{\tau-1} \}$.  
The set $D_{\tau-1}$ satisfies: 
\begin{align}
\mu(D_{\tau-1}) &< \Big( 1 - \frac{1}{2}\mu(B_n^{\prime})\Big)^{\tau-1} \label{A1} \\ 
&\leq \Big( 1 - \frac{\eta}{4}a_n \mu(A_n)\Big)^{(a_n\delta)^{p-1}-1}\ \ \mbox{(for sufficiently large $n$)} \\ 
&\leq 2\Big( \big( 1 - \frac{\eta}{4}a_n \mu(A_n)\big)^{\frac{1}{\eta a_n \mu(A_n)}} \Big)^{\eta \delta^{p-1} a_n^{p} \mu(A_n)} \to 0\ \mbox{as $n\to \infty$}. 
\end{align}
The previous limit equals 0, since the following limits hold: 
\begin{align}
\lim_{n\to \infty} a_n \mu(A_n) &= 0 , \\ 
\lim_{n\to \infty} \big( 1 - \frac{\eta}{4}a_n \mu(A_n)\big)^{\frac{1}{\eta a_n \mu(A_n)}} &= e^{{-1}/{4}} \\ 
\lim_{n\to \infty} a_n^p \mu(A_n) &= \infty. 
\end{align}

Define moving averages $v_{j\tau-k} = m_k$ and 
$L_{j\tau-k}=L_{j\tau}-k$ as above for $1\leq j \leq t$ 
and $0\leq k < \tau$.  
Now we show these moving averages do not converge for points in $D_{\tau}^{c}$.  
Let $x\notin D_{\tau}$.  There exists $k \leq t$ such that $x \in T^{-m_k}B_n^{\prime}$.  
Suppose $r_j = \beta(T^{m_k}x)$ and let $y=T^{m_k}x$.  
Thus, 
\begin{align}
\frac{1}{L_{j\tau-k}} \sum_{i=m_k}^{m_k+L_{j\tau-k}} f(T^i x) &= \frac{1}{L_{j\tau-k}} \sum_{i=0}^{L_{j\tau-k}} f(T^i y) \\ 
&= \frac{1}{L_{j\tau-k}} \sum_{i=0}^{\ell_x-1} f(T^i y) - \frac{1}{L_{j\tau-k}} \sum_{i=\ell_x}^{L_{j\tau-k}} f(T^i y) \\ 
&> \frac{\ell_x}{L_{j\tau-k}}\Big( \frac{\eta}{2} \Big) - \frac{k}{L_{j\tau-k}} \\ 
&> \frac{\eta}{4} - \frac{\eta^2 a_n}{8\eta a_n} = \frac{\eta}{8} . \label{A2}
\end{align}

Due to the quantization step, for $p\leq 2$, $L_{i}$ is increasing in $i$.  
Given $n_1 \in \mathbb Z^+$, we have defined a finite set of moving averages: 
\[
V_1 = \{ (m_k, L_{j\tau-k}): x\in B_{n_1}^{\prime}, m_k \in \mathcal{M}_1 \} . 
\]
Choose $n_2 \in \mathbb Z^+$ such that 
\[
\eta a_{n_2} > L_{t\tau} . 
\]
which is finite by (\ref{B1}).  
Repeat the same procedure for $n_2$ to produce a finite set of averages $V_2$.  Continue this process to produce 
an infinite collection $V_j$ for $j\in \mathbb Z^+$.  
Therefore, the lemma follows from the results of (\ref{A1})-(\ref{A2}). $\Box$
\end{proof}

\subsection{Proof of Theorem \ref{thm-bad-fun}}
Now we present a proof of the main theorem in this section. 
For $x\in X$, define 
\[
k_x = \inf{\{ k\in \mathbb Z^+: \sum_{i=0}^{k-1} f(T^{-i} x) < \frac{1}{2} k \}} . 
\]
Since $f\in L^1(\mu)$ has mean zero and $T^{-1}$ is ergodic, 
then $k_x < \infty$ for a.e. $x\in X$.  
For $x\in A_n$, $k_x > \frac{2}{3} a_n$.  
Choose $K_n \in \mathbb Z^+$ such that 
\[
\mu \big( \{ x\in A_n: k_x < K_n \} \big) > \frac{1}{2} \mu(A_n) . 
\]
Let $C = \{ I_n, T^{-1}I_n, \ldots , T^{-h_n+1}I_n \}$ be a Rokhlin tower of height $h_n$ 
such that $\frac{K_n}{h_n} < {\mu(A_n)}/{8}$ and 
\[
\mu \big( \bigcup_{i=0}^{h_n-1} T^{-i} I_n \big) > 1 - \frac{\mu(A_n)}{16} . 
\]
Let $N > n$ be such that 
\[
\mu \big( \bigcup_{i=N+1}^{\infty} A_i \big) < \frac{\mu(A_n)\mu(I_n)}{16h_n} . 
\]
Let $E = \bigcup_{i=N+1}^{\infty} A_i$ and 
\[
J_n = I_n \setminus \big( \bigcup_{i=0}^{h_n-1} T^{i}E \big) . 
\]
Thus, $\mu(J_n) > \big( 1 - \frac{\mu(A_n)}{16} \big) \mu(I_n)$.  
Hence, 
\[
\mu(\bigcup_{i=0}^{h_n-1} T^i J_n ) >  \big( 1 - \frac{\mu(A_n)}{16} \big) \big( 1 - \frac{\mu(A_n)}{16}\big) 
> 1 - \frac{\mu(A_n)}{8} . 
\]
This implies 
\[
\mu(A_n \cap \bigcup_{i=0}^{h_n-K_n} T^{-i} J_n ) > \frac{3\mu(A_n)}{4} . 
\]
If $A_n^{\prime} = \{ x\in A_n: k_x < K_n \} \cap \bigcup_{i=0}^{h_n-K_n} T^{-i} J_n$, then 
\[
\mu(A_n^{\prime}) >  \frac{\mu(A_n)}{4} . 
\]

Let $P_n$ be a finite measurable partition of $J_n$ such that for each $q\in P_n$ 
and $0\leq i < h_n$, if $\mu(A_j \cap T^{-i} q)>0$ for some $1\leq j \leq N$, then 
$\mu(A_j \cap T^{-i} q) = \mu(p)$.  Note that 
$k_x=k_y$ for $x,y \in A_n^{\prime}\cap T^{-i}q$ for $0\leq i < h_n-K_n$.  
For $x\in T^{-i}q$ and $0\leq i < h_n-K_n$, the value of $k_x$ depends on $q$ and $i$.  
Denote these values by $k_{q,i}$.  
The collection 
\[
\{ (T^{-i} q, T^{-i-1}q, \ldots , T^{-i-k_{q,i}+1}q): T^{-i} q\subseteq A_n, 0\leq i < h_n-K_n \} 
\]
forms a finite cover of $\{ A_n \cap T^{-i} q : 0\leq i < h_n-K_n \}$.  
For each $q\in P_n$, this cover can be thought of as a one-dimensional 
cover of integers by intervals of integers.  Apply the Vitali Covering Lemma \cite{Vitali} 
to extract a finite disjoint subcover for each $q\in P_n$.  Thus, for each 
$q\in P_n$, there is a finite collection $r_{q,1}, \ldots , r_{q,t_q}$ 
such that the sets 
\[
U_{q,j} = \bigcup_{i=0}^{k_{q,r_{q,j}}-1} T^{-r_{q,j}-i}q 
\]
are disjoint and 
\[
\mu \big( A_n \cap \bigcup_{q\in P_n} \bigcup_{j=1}^{t_q} U_{q,j} \big) \geq \frac{1}{3} \mu(A_n^{\prime}) > \frac{1}{12} \mu(A_n) . 
\]

Now, we bound the maximum number of iterates $T^{-i} q$ which fall in $A_n$.  By the construction, 
$T^{-r_{q,j}}q \subseteq A_n$.  
For $x\in T^{-r_{q,j}}q$, 
\[
\sum_{i=0}^{k_{q,r_{q,j}}-1} f(T^{-i} x) < \frac{1}{2} k_{q,r_{q,j}} . 
\]
To simplify this next computation, let $k = k_{q,r_{q,j}}$.  Let $\rho_1$ 
be the number of iterates that fall in $A_n$, $\rho_2$ the number 
of iterates that fall in $B$ and $\rho_3$ the number of remaining iterates.  
Thus, $\rho_1 + \rho_2 + \rho_3 = k$.  We have 
\[
\rho_1 a_n - \rho_2 + \rho_3 a < \frac{1}{2} k 
\]
where $a \geq 0$ is an average of non-negative values including $a_j$, $j\neq n$.  
This implies 
\[
\rho_1 < \frac{3k}{2a_n} .  
\]
This shows that 
\[
\mu(U_{q,j}) = \frac{k}{\rho_1} \mu(U_{q,j}\cap A_n) > \frac{2a_n}{3} \mu(U_{q,j}\cap A_n) . 
\]
Summing over all $U_{q,j}$, we get 
\begin{align}
\mu \big( \bigcup_{q\in P_n} \bigcup_{j=1}^{t_q} U_{q,j} \big) &= \sum_{q\in P_n} \sum_{j=1}^{t_q} \mu(U_{q,j}) \\ 
&> \sum_{q\in P_n} \sum_{j=1}^{t_q} \frac{2a_n}{3} \mu(U_{q,j}\cap A_n) \\ 
&= \frac{2a_n}{3} \mu \big( A_n \cap \bigcup_{q\in P_n} \bigcup_{j=1}^{t_q} U_{q,j} \big) \\ 
&> \Big( \frac{2a_n}{3} \Big) \Big( \frac{1}{12} \Big) \mu(A_n) = \frac{1}{18} a_n \mu(A_n) . 
\end{align}

Our goal is to apply Lemma \ref{lem-bad-fun}.  We're close.  
We obtain the set $B_n$ by taking subsets of $U_{q,j}$.  Start at the top of $U_{q,j}$.  
In particular, let 
\[
m_{q,j} = \lceil \frac{3k_{q,r_{q,j}}}{4} \rceil . 
\]
and let 
\[
U_{q,j}^{\prime} =  \bigcup_{i=m_{q,j}}^{k_{q,r_{q,j}}-1} T^{-r_{q,j}-i}q . 
\]
Hence, 
\[
\mu \big( \bigcup_{q\in P_n} \bigcup_{j=1}^{t_q} U_{q,j}^{\prime} \big) > \frac{1}{72} a_n \mu (A_n) . 
\]
For each $x \in U_{q,j}^{\prime}$, there exists $m$, $m_{q,j} \leq m < k_{q,r_{q,j}}$ such that 
$x \in T^{-r_{q,j}-m}q$.  Let $\ell_x = k_{q,r_{q,j}}-1$ which is greater than $\frac{2}{3} a_n-1$.  
Consider the ergodic average of $T$ for $x\in U_{q,j}^{\prime}$.  
Thus, 
\begin{align}
\sum_{i=0}^{\ell_x-1} f(T^{i} x) &= \sum_{i=0}^{m_{q,j}} f(T^{i}x) + \sum_{i=m_{q,j}+1}^{\ell_x-1} f(T^{i}x) \\ 
&> \frac{1}{2} \Big( \frac{3\ell_x}{4} \Big) - \frac{\ell_x}{4} > \frac{1}{8}\ell_x . 
\end{align}
Therefore, by Lemma \ref{lem-bad-fun}, this theorem follows.  $\Box$ 

\section{\bf Appendix A: Universal Moving Averages Via Coboundary Solutions}
\label{moving-avg-app}
Given a measurable function $f$, if there exist solutions $T$ and $g$ 
to the coboundary equation $f = g - g\circ T$ where $T$ is ergodic 
measure preserving and $g\in L^p(\mu)$ for $p>0$, then all 
moving averages $(v_n,L_n)$ converge a.e. where 
$L_n \geq n^{\frac{1}{p}}$.  A proof for the case $p=1$ 
is provided in the appendix of \cite{AR3}. 
\begin{thm}
\label{moving-avg-thm}
Suppose $T$ is an ergodic invertible measure preserving transformation 
on $(X, \B, \mu)$.  If $f$ is a coboundary with transfer function 
$g\in L^p(\mu)$ for $p>0$, then given $L_n \geq n^{\frac{1}{p}}$ and 
$v_n \in \mathbb{Z}$, there exists a set $E$ of measure zero such that 
for $x\notin E$: 
\[
\lim_{n\to \infty} \frac{1}{L_n} \sum_{i=1}^{L_n} f(T^{v_n + i} x) = 0 . 
\]
\end{thm}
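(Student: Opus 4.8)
The plan is to exploit the telescoping that a coboundary forces on any block sum. Writing $f = g - g\circ T$, for each $n$ one has
\[
\frac{1}{L_n}\sum_{i=1}^{L_n} f(T^{v_n+i}x) \;=\; \frac{1}{L_n}\bigl(g(T^{v_n+1}x) - g(T^{v_n+L_n+1}x)\bigr),
\]
so the theorem reduces to showing that $\frac{1}{L_n}\,g(T^{w_n}x) \to 0$ a.e.\ for the two integer sequences $w_n = v_n+1$ and $w_n = v_n + L_n + 1$. In fact I would prove this for an arbitrary sequence of integers $(w_n)$, which is no harder.

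For that reduction step, fix $\varepsilon > 0$. Since $T$ is measure preserving,
\[
\sum_{n=1}^\infty \mu\{\,|g\circ T^{w_n}| \ge \varepsilon L_n\,\} \;=\; \sum_{n=1}^\infty \mu\{\,|g| \ge \varepsilon L_n\,\} \;\le\; \sum_{n=1}^\infty \mu\{\,|g| \ge \varepsilon n^{1/p}\,\} \;=\; \sum_{n=1}^\infty \mu\{\,|g|^p \ge \varepsilon^p n\,\},
\]
using $L_n \ge n^{1/p}$ in the middle inequality. The last series is finite because $|g|^p \in L^1(\mu)$ and, for any non-negative $h \in L^1(\mu)$ and $c > 0$, one has $\sum_{n\ge 1}\mu\{h \ge cn\} \le c^{-1}\int_X h\,d\mu < \infty$ (compare the series termwise with $\int_0^\infty \mu\{h \ge t\}\,dt$). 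By the Borel--Cantelli lemma, for a.e.\ $x$ we have $|g(T^{w_n}x)| < \varepsilon L_n$ for all large $n$, i.e.\ $|g(T^{w_n}x)|/L_n < \varepsilon$ eventually. Intersecting the resulting full-measure sets over $\varepsilon = 1/k$, $k \in \mathbb{Z}^+$, gives $g(T^{w_n}x)/L_n \to 0$ for a.e.\ $x$. Applying this to $w_n = v_n + 1$ and to $w_n = v_n + L_n + 1$ and intersecting the two full-measure sets, the telescoped expression tends to $0$ a.e., which is the claim.

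The argument uses nothing about $f$ beyond $|g|^p \in L^1(\mu)$, so it is valid for every $p > 0$; for $p < 1$, $L^p(\mu)$ is not normed, but only integrability of $|g|^p$ is invoked, so this causes no difficulty. Honestly there is no serious obstacle here: the only points requiring a little care are that the estimate must be run simultaneously for both endpoint sequences (handled by intersecting two null sets), that the Borel--Cantelli step must be applied to a countable family of $\varepsilon$'s, and --- if one wanted a quantitative rate --- that the tail bound $\mu\{|g|^p \ge \varepsilon^p n\} \to 0$ carries no uniformity in $\varepsilon$. None of these affects the almost-everywhere convergence statement.
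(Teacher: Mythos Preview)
Your proof is correct and follows the same overall strategy as the paper: telescope using $f = g - g\circ T$, then reduce to showing $|g(T^{w_n}x)|/L_n \to 0$ a.e.\ for an arbitrary integer sequence $(w_n)$ via the integrability of $|g|^p$.

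The difference is in how that last step is executed. The paper partitions according to the level sets $E_{n,k} = \{\,n-1 \le |g(T^{v_k+1}x)|^p < n\,\}$, introduces an auxiliary non-decreasing sequence $(K_n)$ with $n/K_n \to 0$ and $\sum_n K_n\,\mu(E_{n,1}) < \infty$, and then builds the exceptional set as an explicit $\limsup$ over $\bigcup_{k\le K_n} E_{n,k}$. Your route is more direct: a single tail estimate $\sum_n \mu\{|g|^p \ge \varepsilon^p n\} \le \varepsilon^{-p}\int |g|^p\,d\mu$ followed by Borel--Cantelli and a countable intersection over $\varepsilon = 1/k$. Both arguments use only that $|g|^p \in L^1(\mu)$ and that $T$ is measure preserving (ergodicity is not needed for this theorem, as your argument makes clear). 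Your version is shorter and avoids the auxiliary sequence; the paper's construction, on the other hand, packages the exceptional set a bit more explicitly and could in principle be tuned to track a rate via the choice of $K_n$, though no such rate is actually used.
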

{\bf Proof}: 
Suppose $f(x) = g(x) - g(Tx)$ for almost every $x\in X$.  Then 
\[
\sum_{i=1}^{L_n} f(T^{v_n + i} x) = g(T^{v_n+1}x) - g(T^{v_n + L_n + 1}x) . 
\]
Since $L_n \geq n^{\frac{1}{p}}$, it is sufficient to show each of the following:
\begin{itemize} 
\item for a.e. $x$, $\lim_{n\to \infty} {g(T^{v_n+1}x)}/{n^{\frac{1}{p}}} = 0$; 
\item for a.e. $x$, $\lim_{n\to \infty} {g(T^{v_n+L_n+1}x)}/{n^{\frac{1}{p}}} = 0$. 
\end{itemize}
Here, we show the first term converges to zero.  A similar argument will show the second term converges to zero.  

For $n, k\in \natural$, define 
\[
E_{n,k} = \{ x\in X: n-1 \leq | g(T^{v_k+1}x) |^p < n \} . 
\]
Since $g \in L^p(\mu)$, then 
\[
\sum_{n=1}^{\infty} (n-1) \mu (E_{n,1}) \leq \int_X | g(T^{v_1+1}x) |^p d\mu < \infty . 
\]
There exists non-decreasing $K_n \in \natural$ such that $\lim_{n\to \infty} {n}/{K_n} = 0$ and 
\[
\sum_{n=1}^{\infty} K_n \mu (E_{n,1}) < \infty . 
\]
Define 
\[
E = \bigcap_{m=1}^{\infty} \bigcup_{n=m}^{\infty} \bigcup_{k=1}^{K_n} E_{n,k} . 
\]
First, since $T$ is measure-preserving, we have 
\begin{align*}
\mu(E) \leq \sum_{n=m}^{\infty} \sum_{k=1}^{K_n} \mu(E_{n,k}) 
&= \sum_{n=m}^{\infty} \sum_{k=1}^{K_n} \mu(E_{n,1}) \\ 
&= \sum_{n=m}^{\infty} K_n \mu(E_{n,1}) \to 0\ \ \mbox{as}\ m\to \infty .  
\end{align*}
Thus, $\mu(E)=0$.  Second, we show for $x\notin E$, we have a.e. convergence.  
If $x\notin E$, there exists $m$ sufficiently large such that 
\[
x \notin \bigcup_{k=1}^{K_n} E_{n,k} \ \ \mbox{for}\ n\geq m . 
\]
Hence, if $n > |g(T^{v_k+1}x)|^p \geq n-1$, then $k > K_n$.  Therefore, 
\[
\frac{|g(T^{v_k+1}x)|}{k^{\frac{1}{p}}} < \Big( \frac{n}{K_n} \Big)^{\frac{1}{p}} 
\to 0\ \ \mbox{as}\ n\to \infty . \ \Box
\]

\begin{cor}
Suppose $(X,\B,\mu)$ is a standard probability space and $f \in L^p(\mu)$ 
for $p>1$.  
There exists an ergodic invertible measure preserving transformation 
$T$ on $(X,\B,\mu)$ such that all moving averages $(v_n,L_n)$ converge 
to zero pointwise where $L_n \geq n^{\frac{1}{p-1}}$.  
\end{cor}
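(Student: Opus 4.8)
The plan is to obtain $T$ directly from the coboundary existence theorem recalled in the introduction and then invoke Theorem~\ref{moving-avg-thm}; this corollary is a one-line consequence of those two facts. Since the moving average operators are affine in $f$ and fix constants (indeed $M(v_n,L_n)^T c = c$ for a constant function $c$), it suffices to treat the mean-zero case: replacing $f$ by $f-\int_X f\,d\mu$ we may assume $\int_X f\,d\mu=0$, and then ``converges to zero'' is exactly the right conclusion. (For a general $f\in L^p(\mu)$ the same argument yields convergence to $\int_X f\,d\mu$.)

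First I would apply the result of Adams and Rosenblatt~\cite{AR3} quoted in the introduction: for $f\in L^p(\mu)$ with $p>1$ there exist an ergodic invertible measure preserving transformation $T$ on $(X,\B,\mu)$ and a transfer function $g\in L^{p-1}(\mu)$ with $f=g-g\circ T$. Since $p>1$ we have $q:=p-1>0$, so $g$ lies in $L^q(\mu)$ with $q>0$, which is precisely the hypothesis required by Theorem~\ref{moving-avg-thm}.

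Then I would simply invoke Theorem~\ref{moving-avg-thm} with this $T$, this $g$, and exponent $q=p-1$: for every sequence of integer pairs $(v_n,L_n)$ with $L_n\geq n^{1/q}=n^{1/(p-1)}$ there is a null set off of which $\frac{1}{L_n}\sum_{i=1}^{L_n} f(T^{v_n+i}x)\to 0$ as $n\to\infty$. That is exactly the assertion of the corollary for the mean-zero $f$, and the general statement follows by adding back the constant $\int_X f\,d\mu$.

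The substantive content is thus entirely carried by the two imported ingredients, and the deduction only involves bookkeeping: matching the exponents ($1/q=1/(p-1)$), recording that the map furnished by~\cite{AR3} is invertible and ergodic as Theorem~\ref{moving-avg-thm} demands, and checking that the reduction to mean-zero $f$ costs nothing. The one point where a reader might hesitate is that for $1<p<2$ the transfer function $g$ need not even be integrable; but Theorem~\ref{moving-avg-thm} was established for all exponents $q>0$ precisely to cover that range, so no genuine obstacle appears here. If one wanted a self-contained account, the real work lies in the construction of the coboundary solution in~\cite{AR3}, not in this last step.
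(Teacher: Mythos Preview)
Your proposal is correct and follows exactly the paper's own argument: invoke the coboundary existence result from~\cite{AR3} to obtain an ergodic invertible $T$ and $g\in L^{p-1}(\mu)$ with $f=g-g\circ T$, then apply Theorem~\ref{moving-avg-thm} with exponent $q=p-1$. Your explicit reduction to the mean-zero case and the remarks on exponent matching are welcome clarifications, but the substance is identical to the paper's two-line proof.
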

{\bf Proof}: 
By Theorem 1.1 of \cite{AR3}, there exists a solution pair 
$T$ and $g \in L^{p-1}(\mu)$ such that $f = g - g\circ T$ a.e.  
Therefore, by Theorem \ref{moving-avg-thm}, all moving averages 
converge to zero for $f$ and $T$. $\Box$ 

\noindent {\bf Acknowledgments}:  We would like to thank Joanna Furno and Andy Parrish for their help in reviewing this work, and we thank Benjy Weiss for telling us how to know if a function $f \in L^1(\mu)$ can be the generating function of an IID sequence $(f\circ T^k)$ for some map $T$ on $X$.

\bigskip

{\small
\parbox[t]{5in}
{T. Adams\\
U.S. Government\\
E-mail: terry@ieee.org}}
\bigskip

{\small
\parbox[t]{5in}
{J. Rosenblatt \\
Department of Mathematics\\
University of Illinois\\
Urbana, IL 46201, USA\\
E-mail: rosnbltt@illinois.ed}}
\bigskip


\begin{thebibliography}{99}

\bibitem{AR1} T. Adams, J. Rosenblatt, {\em Joint coboundaries}, Dynamical Systems, Ergodic Theory, and Probability: in Memory of Kolya Chernov, Contemporary Mathematics, 698 (2017).

\bibitem{AR3} T. Adams and J. Rosenblatt, {\em Existence and non-existence of solutions to the coboundary equation for measure-preserving systems}, Ergodic Theory \& Dynamical Systems, to appear, 40 pages.


\bibitem{AdJ} M. Akcoglu and A. del Junco, {\em Convergence of averages of point transformations}, AMS Proceedings  49  (1975) 265-266.

\bibitem{AdelJLee} M. Akcoglu, A. del Junco,, and W. M. F. Lee, {\em A solution to a problem of A. Bellow},  Almost everywhere convergence, II (Evanston, IL, 1989),  177, Academic Press, Boston, MA, 1991.

\bibitem{AK} S. Asmussen and T. Kurtz, {\em Necessary and sufficient conditions for complete convergence in the law of large numbers}, Annals of Probability 8, no. 1 (1980) 176-182.


\bibitem{Baxter} J. Baxter, {\em A class of ergodic transformations having simple spectrum},
Proceedings AMS 27  (1971) 275-279.

\bibitem{Belley} J.-M. Belley, {\em Invertible measure preserving transformations and pointwise convergence},
AMS Proceedings  43  (1974) 159-162.

\bibitem{Bellow} A. Bellow, {\em Two problems}, Proceedings Oberwolfach Conference on Measure Theory (June
1987), Springer Lecture Notes in Math 945, 1987.

\bibitem{Bellow2}  A. Bellow, {\em Perturbation of a sequence}, Advances in Mathematics 78, no. 2  (1989) 131-139.

\bibitem{BJR}  A. Bellow, R. Jones, and J. Rosenblatt, {\em Convergence for moving averages}, Ergodic Theory and Dynamical Systems  10, No 1  (1990) 43-62.


\bibitem{Bourgain} J. Bourgain, {\em Almost sure convergence and bounded entropy}, Israel Journal of Math  63, no. 1  (1988)  79-97.

\bibitem{BourgainRT} J. Bourgain, {\em Pointwise ergodic theorems for arithmetic sets}, Inst. Hautes  Etudes Sci. Publ. Math
69 (1989) 5-45.

\bibitem{delJunco} A. del Junco, {\em  A transformation with simple spectrum which is not rank one}, Canadian Journal of Math  29, no. 3  (1977) 655-663.

\bibitem{DL}   Y. Derriennic and M. Lin,
{\em Fractional Poisson equations and ergodic theorems for fractional coboundaries},
Israel Journal of Mathematics 123 (2001) 93-130.

\bibitem{Erdos} P. Erd\H{o}s, {\em On a theorem of Hsu and Robbins}, Annals of Mathematical Statistics 20 (1949) 286-291.

\bibitem{Gut} A. Gut, {\em On complete convergence in the law of large numbers for subsequences}, Annals of Probability 13, no. 4 (1985) 1286-1291.

\bibitem{Hoef} W. Hoeffding, {\em Probability inequalities for sums of bounded random variables}, Journal of the American Statistical Association 58, no. 301 (1963) 13--30. https://doi.org/10.2307/2282952. 

\bibitem{HR} P. L. Hsu and H. Robbins, {\em Complete convergence and the law of large numbers}, Proceedings of the National Academy of Sciences, USA 33  (1947) 25-31.

\bibitem{Knopp54} K. Knopp, {\em Theory and Application of Infinite Series}, Blackie \& Son LTD, London and Glasgow (1954). 

\bibitem{LS} M. Lin and R. Sine,  {\em Ergodic theory and the functional equation $(I- T)x = y$}, J. Operator Theory 10 (1983) no. 1, 153-166.

\bibitem{Mentzen} M. Mentzen, {\em Some examples of automorphisms with rank $R$  and simple spectrum},
Bulletin Polish Academy of Science Math  35, no. 7-8  (1987) 417-424.


\bibitem{Newton} D. Newton, {\em Gaussian processes with simple spectrum},  Z. Wahrscheinlichkeitstheorie und Verw. Gebiete  5  (1966) 207-209.


\bibitem{Reinhold} K. Reinhold, {\em  Discrepancy of behavior of perturbed sequences in $L^p$-spaces}, Proceedings of the AMS 120, no. 3  (1994) 865-874.

\bibitem{Rosenblatt} J. Rosenblatt, {\em Optimal norm approximation in ergodic theory}, pp. 40–76 in Advances in dynamical systems, Edited by Idris Assani, De Gruyter, Berlin, 2016

\bibitem{Rosenthal} H. Rosenthal, {\em On the subspaces of $L^p (p > 2)$ spanned by sequences of independent random variables}, Israel Journal of Math 8 (1970) 273-303.

\bibitem{Sawyer} S. Sawyer, {\em Maximal inequalities of weak type}, Annals of Math, 84 no. 2 (1966) 157-174.

\bibitem{Vitali} G. Vitali, {\em Sui gruppi di punti e sulle funzioni di variabili reali}, Torino Atti 43 (1908) 229-246.

\end{thebibliography}
\end{document}